\definecolor{verylight}{gray}{0.97}
\definecolor{light}{gray}{0.9}
\definecolor{medium}{gray}{0.85}
\definecolor{dark}{gray}{0.6}
 \def\NZQ{\mathbb}               
 \def\ZZ{{\NZQ Z}}
 \def\RR{{\NZQ R}}
 \def\FF{{\NZQ F}}
 \def\frk{\mathfrak}               
 \def\mm{{\frk m}}
 \def\G{{\mathcal G}}
 \def\ab{{\mathbf a}}
 \def\xb{{\mathbf x}}
 \def\0b{{\mathbf 0}}
 \def\opn#1#2{\def#1{\operatorname{#2}}} 
 \opn\chara{char} \opn\length{\ell} \opn\pd{pd} \opn\rk{rk}
 \opn\projdim{proj\,dim} \opn\injdim{inj\,dim} \opn\rank{rank}
 \opn\depth{depth} \opn\grade{grade} \opn\height{height}
 \opn\embdim{emb\,dim} \opn\codim{codim}
 \opn\Tr{Tr} \opn\bigrank{big\,rank}
 \opn\superheight{superheight}\opn\lcm{lcm}
 \opn\trdeg{tr\,deg}
 \opn\reg{reg} \opn\lreg{lreg} \opn\ini{in} \opn\lpd{lpd}
 \opn\size{size} \opn\sdepth{sdepth}
 \opn\link{link}\opn\fdepth{fdepth}\opn\lex{lex}
 \opn\tr{tr}
 \opn\type{type}
 \opn\gap{gap}
 \opn\arithdeg{arith-deg}
 \opn\HS{HS}
 \opn\GL{GL}
 \opn\div{div} \opn\Div{Div} \opn\cl{cl} \opn\Cl{Cl}
 \opn\Spec{Spec} \opn\Supp{Supp} \opn\supp{supp} \opn\Sing{Sing}
 \opn\Ass{Ass} \opn\Min{Min}\opn\Mon{Mon}
 \opn\Ann{Ann} \opn\Rad{Rad} \opn\Soc{Soc}\opn\Deg{Deg}
 \opn\Im{Im} \opn\Ker{Ker} \opn\Coker{Coker} \opn\Am{Am}
 \opn\Hom{Hom} \opn\Tor{Tor} \opn\Ext{Ext} \opn\End{End}
 \opn\Aut{Aut} \opn\id{id}
 \opn\nat{nat}
 \opn\pff{pf}
 \opn\Pf{Pf} \opn\GL{GL} \opn\SL{SL} \opn\mod{mod} \opn\ord{ord}
 \opn\Gin{Gin} \opn\Hilb{Hilb}\opn\sort{sort}
 \opn\PF{PF}\opn\Ap{Ap}
 \opn\mult{mult}
 \opn\bight{bight}
 \opn\aff{aff}
 \opn\relint{relint} \opn\st{st}
 \opn\lk{lk} \opn\cn{cn} \opn\core{core} \opn\vol{vol}  \opn\inp{inp} \opn\nilpot{nilpot}
 \opn\link{link} \opn\star{star}\opn\lex{lex}\opn\set{set}
 \opn\width{wd}
 \opn\Fr{F}
 \opn\QF{QF}
 \opn\G{G}
 \opn\type{type}\opn\res{res}
 \opn\conv{conv}
 \opn\Ind{Ind}
 \opn\gr{gr}
 \def\pot#1#2{#1[\kern-0.28ex[#2]\kern-0.28ex]}
 \opn\dirlim{\underrightarrow{\lim}}
 \opn\inivlim{\underleftarrow{\lim}}
 \let\union=\cup
 \let\iso=\cong
 \let\Union=\bigcup
 \let\Dirsum=\bigoplus
 \let\to=\rightarrow
 \def\Implies{\ifmmode\Longrightarrow \else
         \unskip${}\Longrightarrow{}$\ignorespaces\fi}
 \def\implies{\ifmmode\Rightarrow \else
         \unskip${}\Rightarrow{}$\ignorespaces\fi}
 \def\iff{\ifmmode\Longleftrightarrow \else
         \unskip${}\Longleftrightarrow{}$\ignorespaces\fi}
 \newtheorem{Theorem}{Theorem}[section]
 \newtheorem{Lemma}[Theorem]{Lemma}
 \newtheorem{Corollary}[Theorem]{Corollary}
 \newtheorem{Proposition}[Theorem]{Proposition}
 \newtheorem{Definition}[Theorem]{Definition}
 \let\epsilon\varepsilon
 \let\kappa=\varkappa
 \def\qed{\ifhmode\textqed\fi
       \ifmmode\ifinner\quad\qedsymbol\else\dispqed\fi\fi}
 \def\textqed{\unskip\nobreak\penalty50
        \hskip2em\hbox{}\nobreak\hfil\qedsymbol
        \parfillskip=0pt \finalhyphendemerits=0}
 \def\dispqed{\rlap{\qquad\qedsymbol}}
 \opn\dis{dis}
 \def\pnt{{\raise0.5mm\hbox{\large\bf.}}}
 \opn\Lex{Lex}
\begin{document}

\title{Some homological properties of Borel type ideals}

\author{ J\"urgen Herzog, Somayeh Moradi$^{\ast}$, Masoomeh Rahimbeigi and Guangjun Zhu     }

\address{J\"urgen Herzog, Fakult\"at f\"ur Mathematik, Universit\"at Duisburg-Essen, 45117
Essen, Germany} \email{juergen.herzog@uni-essen.de}

\address{Somayeh Moradi, Department of Mathematics, School of Science, Ilam University,
P.O.Box 69315-516, Ilam, Iran}
\email{so.moradi@ilam.ac.ir}

\address{Masoomeh Rahimbeigi, Department of Mathematics, University of Kurdistan, Post
Code 66177-15175, Sanandaj, Iran}
\email{rahimbeigi$_{-}$masoome@yahoo.com}

\address{Guangjun Zhu, School of Mathematical Sciences, Soochow University, Suzhou
215006, P. R. China}
\email{zhuguangjun@suda.edu.cn}

\thanks{$^{\ast}$ Corresponding author}
\thanks{2020 {\em Mathematics Subject Classification}.
    Primary 13F20, 13A15; Secondary  05E40}


\thanks{Keywords: Homological shift ideal, $k$-Borel ideal, $t$-spread Veronese ideal, linear quotient, multiplicity, analytic spread}

\maketitle
\begin{abstract}
We study ideals of Borel type, including $k$-Borel ideals and $t$-spread Veronese ideals.  We determine their free resolutions and their homological shift ideals.  The multiplicity and the analytic spread of equigenerated  squarefree principal Borel ideals are computed. For the multiplicity, the result is given under an additional assumption  which is always satisfied for squarefree principal Borel ideals. These results are used to analyze the behaviour of height, multiplicity and analytic spread  of the homological shift ideals $\HS_j(I)$ as functions of $j$, when $I$ is an equigenerated squarefree Borel ideal. 
\end{abstract}
\section*{Introduction}
Let $K$ be a field,  $S=K[x_1,\ldots,x_n]$ the polynomial ring in $n$ variables over $K$ and $I\subset S$ a graded ideal.  
By a famous theorems   of Galligo~\cite{G} and Bayer-Stillman~\cite{BS}, the generic initial ideal of $I$ is Borel-fixed, that is, it is fixed under the action of the Borel subgroups of $GL(n,K)$. Moreover, if  $\chara(K)=0$, then the Borel-fixed ideals are precisely the strongly stable ideals (also known as Borel ideals),  see \cite[Proposition 4.2.4]{HH}. Applying the Kalai stretching operator (\cite{Ka1} and \cite{Ka2}) to strongly stable ideals, one obtains  squarefree strongly stable ideals,  which were first considered in \cite{AHH} and which play an important role in algebraic shifting theory.

In this paper, we focus our attention on $k$-Borel ideals. 
Let $k$ be a positive integer. We call a  monomial $u= x_1^{a_1}\cdots x_n^{a_n}$ {\em $k$-bounded},  if $a_i\leq k$ for $i=1,\ldots,n$. A monomial ideal  which is generated by $k$-bounded monomials   is said to be  {\em $k$-Borel}, if  for any $k$-bounded monomial $u\in I$, and for any $j\in \supp(u)$ and $i<j$, we have $x_i(u/x_j)\in I$,  provided   $x_i(u/x_j)$ is again  $k$-bounded. Thus $k$-Borel ideals are strongly stable ideals respecting the $k$-boundedness. The squarefree strongly stable  ideals are just the $1$-Borel ideals.   Other interesting  restrictions of strongly stable ideals have been considered in \cite{CKST} and \cite{DHQ}.

Eliahou and Kervaire~\cite{EK} gave an explicit free resolution not only for strongly stable ideals, but also for the larger class of stable ideals. This important result allowed Bigatti \cite{Bi}  and Hulett \cite{Hu} to show that if $I$ is a graded ideal, then the graded Betti numbers of $I$ are bounded above by  the graded Betti numbers of the corresponding lex-ideal. 

In Section~1, we state and prove some basic properties of $k$-Borel ideals. For any  finite set $\{u_1,\ldots,u_m\}$  of $k$-bounded monomials, there exists a unique smallest $k$-Borel ideal  containing $u_1,\ldots,u_m$,    which we denote by $B_k(u_1,\ldots,u_m)$. For $k$-bounded monomials $u,v$ of the same degree, we set $v\preceq_{k} u$  if $v\in B_k(u)$. This binary relation defines a partial order on the set of $k$-bounded monomials of the same  degree. In Proposition~\ref{height} we show that the height of  $B_k(u_1,\ldots,u_m)$ is given by $\max\{\min(u_1),\ldots,\min(u_m)\}$, where we set $\min(u)=\min\{i\: x_i|u\}$ for a monomial $u$.

The graded Betti numbers of a $k$-Borel ideal $I$  are computed in Section~2,   see Corollary~\ref{somayeh}.  This is achieved by  Theorem~\ref{Koszulcycles} which provides   a natural $K$-basis of the Koszul homology $H(x_1,\ldots,x_n;S/I)$. Interestingly,  the basis consists of the homology classes of monomial Koszul cycles. The proof of Theorem ~\ref{Koszulcycles} also yields a $K$-basis of $H(x_1,\ldots,x_i;S/I)$ for any initial sequence $x_1,\ldots, x_i$ with $1\leq i\leq n$. By using Corollary~\ref{somayeh} it is shown in Corollary~\ref{anyorder} that if $I$ is a $k$-Borel ideal generated in a single degree, then $I$ has linear quotients for any order of the  monomial generators of $I$,  which extends the  partial order $\prec_k$. This result is complemented by Proposition~\ref{lq} in which we show that any $k$-Borel ideal (even if it is not generated in a single degree) has linear quotients with respect to the lexicographical order of its generators. The explicit resolution of a $k$-Borel ideal $I$ can be given, due to the fact that $I$ has a regular decomposition function, as shown in Proposition~\ref{regdec}. This proposition together with \cite[Theorem 3.10]{DM} also implies that the resolution of $I$ is cellular and supported on a regular CW-complex. 

In Section~3, we determine the multiplicity and the analytic spread of squarefree Borel ideals, which by definition are just the $1$-Borel ideals. The results refer to the block decomposition of the support of a squarefree monomial. This concept was   first introduced in \cite{DHQ}. Let $u= x_{i_1} x_{i_2}\cdots x_{i_d} $ be a squarefree monomial with $i_1<i_2<\cdots <i_d$. A {\em block} of $u$ is a subset $\{i_{\l}, i_{\l+1},\cdots, i_{\l+k}\}$ such that $i_{\l+j}=i_{\l}+j$ for $j=1,\ldots, k$.   A block of $u$  is called {\em maximal} if it is not properly contained in any other block of $u$. Note that $\supp(u)$ can be uniquely decomposed into  maximal blocks. In other words, $\supp(u)=B_{1}\sqcup B_{2}\sqcup\cdots \sqcup B_{k}$, where each $B_i$ is a maximal block and $\max\{j:\, j\in B_i\}<\min\{j:\, j\in B_{i+1}\}-1$ for all $i$. It is shown  in Theorem~\ref{multi} that if $I$ is a squarefree principal Borel ideal with the Borel generator $u$,  then the multiplicity   of $S/I$ is given by  ${\max(B_1) \choose |B_1|-1}$, where $B_1$ is the first block in the block decomposition of $u$. This result can be generalized to squarefree Borel ideals with several Borel generators, provided there is one of the Borel generators  whose first block is contained in the first blocks of all the other Borel generators, see Theorem~\ref{multigeneral}. The analytic spread of a squarefree Borel ideal $I$  with Borel generators $u_1,\ldots,u_m$  is also  determined by the first blocks of the block decomposition, as shown in Theorem~\ref{analytic spread}. If $B_{i1}$ is the first block in the block decomposition of $u_i$, then the analytic spread of $I$ is  $n=\max\{\max(u_i)\;\:  1 \leq i\leq m\}$, if $1\notin \bigcap_{i=1}^m B_{i1}$, and is equal to $n-|\bigcap_{i=1}^m B_{i1}|$, otherwise.

 Section 4 is devoted to the study of the homological shift ideals of equigenerated squarefree  Borel ideals. For the moment, let $I$ be any monomial ideal and let $\FF$ be its minimal  multigraded free $S$-resolution. Then $F_j=\Dirsum_{k=1}^{b_j} S(-{\ab_{jk}})$ with   each $\ab_{jk}$  an integer vector in $\mathbb{Z}^n$ with non-negative entries. The monomial ideal $\HS_j(I)$ generated by the monomials $\xb^{\ab_{jk}}$, $k=1,\ldots,b_j$ is called the $j$th homological shift ideal of $I$.  These ideals provide some extra information about the nature of the multigraded free resolution of $I$. Homological shift ideals have been studied in \cite{Ba},\cite{BJT} and \cite{HMRZ}. It is conjectured that if $I$ has linear resolution, then  $\HS_j(I)$ has linear resolution for all $j$, This conjecture is still widely open, and only proved in some special cases. Here we study how the height, the analytic spread and the multiplicity of $\HS_j(I)$ behave as a function  of $j$, when $I$ is an equigenerated squarefree  Borel ideal. Based on Proposition~\ref{gen1}, we describe in Corollary~\ref{clever} the minimal set  of monomial generators of $\HS_j(I)$. From this description it can be seen that $\HS_j(I)$ is again a squarefree Borel ideal and that  $\HS_1(\HS_j(I))=\HS_{j+1}(I)$ for all $j<\projdim S/I$. Having these information it is not so hard to see that the height and the analytic spread of $\HS_j(I)$ is a non-decreasing function of $j$, see Corollary~\ref{nond}. The multiplicity function of homological shift ideals behaves differently. Indeed, in Proposition~\ref{easy but lengthy} we show that if $I$ is a squarefree  principal Borel ideal, the multiplicity of $\HS_j(I)$ is a unimodal function of $j$. 

 In the last  section of this paper,  Section~5,  we consider the homological shift ideals  of  $t$-spread  Veronese ideals $I_{n,d,t}$ of degree $d$ in $n$ variables. These ideals naturally generalize squarefree Veronese ideals, and have been studied  in several papers (see for example \cite{EHQ} and \cite{AEL}). In particular, it is known that $I_{n,d,t}$ has linear quotients.  A monomial   $x_{i_1}x_{i_2}\cdots x_{i_d}$ with $i_1\leq i_2\leq \dots \leq i_d$ is called {\it $t$-spread} if $i_j -i_{j-1}\geq t$ for $2\leq j\leq n$, and $I_{n,d,t}$ is the ideal  generated by all $t$-spread monomials of degree $d$ in $K[x_1,\ldots,x_n]$. Our Theorem~\ref{gent} describes the minimial set of monomial generators of $\HS_i(I_{n,d,t})$ for each $i$.  This result is used to show  that $\HS_1(I_{n,d,t})$ has linear quotients, see Theorem~\ref{hs1}. We expect that  is true also for the higher homological shift ideals. Actually, one could expect that for any equigenerated monomial ideal with linear quotients, all homological shift ideals have linear quotients.

\section{$k$-Borel ideals}

Throughout this paper, $S=K[x_1,\ldots,x_n]$ is a polynomial ring over a field $K$, $\Mon(S)$ denotes the set of monomials in $S$ and for a monomial $u$, we set $\supp(u)=\{i:\ x_i|u\}$. For a monomial ideal $I$, the unique minimal set of monomial generators of $I$ is denoted by $G(I)$. A monomial ideal $I$ is called {\em $k$-bounded}, if it is generated by $k$-bounded monomials.

In this section we define $k$-Borel ideals as a generalization of squarefree strongly stable ideals and study their basic properties.

\begin{Definition}
{\em Let $k$ be a positive integer and $I$ be a $k$-bounded monomial ideal. We say that $I$ is {\em $k$-Borel}, if for any $k$-bounded monomials $u\in I$ the following condition holds: if $j\in \supp(u)$ and $i<j$, then  $x_i(u/x_j)\in I$ provided that $x_i(u/x_j)$ is $k$-bounded.}
\end{Definition}

\begin{Lemma}
Let $I$ be a $k$-Borel ideal. The following conditions are equivalent:
\begin{itemize}
\item[(i)] $I$ is a $k$-Borel ideal,
\item[(ii)] For any $u\in G(I)$, any $j\in \supp(u)$ and $i<j$, if $x_i(u/x_j)$ is $k$-bounded, then $x_i(u/x_j)\in I$.
\end{itemize}
\end{Lemma}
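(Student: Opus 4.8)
The plan is to establish the two implications separately; throughout I regard $I$ as a $k$-bounded monomial ideal (which the statement implicitly requires). The implication $(i)\Rightarrow(ii)$ is essentially immediate, since $G(I)\subseteq I$, so that (ii) is just the defining property of a $k$-Borel ideal restricted to the generators—provided one knows that the monomials in $G(I)$ are themselves $k$-bounded. For the latter I would argue as follows: any $u\in G(I)$, being a monomial lying in a monomial ideal generated by $k$-bounded monomials, is divisible by one of those $k$-bounded generators, say $v$; and $v\in I$ is in turn divisible by some $w\in G(I)$. Then $w\mid v\mid u$ with $w,u\in G(I)$ forces $w=u$ by minimality of $G(I)$, hence $v=u$, so $u$ coincides with a $k$-bounded generator and is $k$-bounded.

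For $(ii)\Rightarrow(i)$, I would take an arbitrary $k$-bounded monomial $u\in I$ together with $j\in\supp(u)$ and $i<j$ such that $u':=x_i(u/x_j)$ is $k$-bounded, and show $u'\in I$. Pick a minimal generator $g\in G(I)$ dividing $u$ and write $u=wg$ for a monomial $w$. If $x_j\nmid g$, then since $j\in\supp(u)=\supp(w)\cup\supp(g)$ we must have $x_j\mid w$, and $u'=(x_iw/x_j)\,g$ is a multiple of $g$, hence lies in $I$. If $x_j\mid g$, put $g':=x_i(g/x_j)$, so that $u'=wg'$ and in particular $g'\mid u'$. Then (ii) applied to $g\in G(I)$, $j\in\supp(g)$ and $i<j$ yields $g'\in I$—once we know that $g'$ is $k$-bounded—and therefore $u'=wg'\in I$.

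The one point that requires care, which I would flag as the (minor) crux, is the $k$-boundedness of $g'$: passing from $g$ to $g'$ raises the $x_i$-exponent by one, so $k$-boundedness could a priori fail. For a variable $x_\ell$ with $\ell\neq i$, the exponent of $x_\ell$ in $g'$ does not exceed its exponent in $g$, which is at most $k$ because $g$ is a $k$-bounded minimal generator; and for $x_i$, the divisibility $g'\mid u'$ gives $\deg_{x_i}(g')\le\deg_{x_i}(u')\le k$, using precisely the hypothesis that $u'$ is $k$-bounded. Hence $g'$ is $k$-bounded and the argument closes. Everything else is routine exponent bookkeeping, and the only case split required is $x_j\mid g$ versus $x_j\nmid g$.
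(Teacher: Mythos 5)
Your proof is correct and follows essentially the same route as the paper: pick a minimal generator dividing $u$, split into the cases $x_j\nmid g$ and $x_j\mid g$, and in the latter case apply (ii) to $x_i(g/x_j)$ after checking it is $k$-bounded. The only cosmetic difference is that your $k$-boundedness check for $g'$ can be shortened: since $g'\mid u'$ and $u'$ is $k$-bounded, every exponent of $g'$ is already bounded by $k$, which is exactly the one-line observation the paper uses.
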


\begin{proof}
$(\textrm{i})\Rightarrow (\textrm{ii})$ is clear.

$(\textrm{ii})\Rightarrow (\textrm{i})$: Let  $u\in I$ be a $k$-bounded monomial, $j\in \supp(u)$ and $i<j$ such that $x_i(u/x_j)$ is $k$-bounded. Then there exists $w\in G(I)$ such that $w\mid u$. If $x_j\nmid w$, then $w\mid x_i(u/x_j)$ and $x_i(u/x_j)\in I$. If $x_j\mid w$, then $x_i(w/x_j)$ divides $x_i(u/x_j)$. Hence $x_i(w/x_j)$ is $k$-bounded as well. So by our assumption it belongs to $I$ which implies that $x_i(u/x_j)\in I$.
\end{proof}

It is clear that the intersection of $k$-Borel ideals is $k$-Borel as well. Hence for $k$-bounded monomials $u_1,\ldots,u_m$,
there exists a unique smallest $k$-Borel ideal containing $u_1,\ldots,u_m$. Indeed, the set of $k$-Borel ideals containing $u_1,\ldots,u_m$ is not empty, because the maximal ideal of $S$ belongs to this set. The intersection of all $k$-Borel ideals containing $u_1,\ldots,u_m$ is the $k$-Borel ideal we are looking for. We denote this ideal by $B_k(u_1,\ldots,u_m)$ and we call the monomials $u_1,\ldots,u_m$ the {\em Borel generators} of this ideal. A $k$-Borel ideal with one Borel generator is called {\em principal $k$-Borel}.  

We set  $v\preceq_{k} u$, if  $v$ and $u$ are  $k$-bounded monomials of the same degree and  $v\in B_k(u)$. Note that $\preceq_{k}$ is a partial order 
on the set of $k$-bounded monomials of the same degree.  We have  $$B_k(u_1,\ldots,u_m)=(v\in \Mon(S):\ v\preceq_{k} u_i \ \textrm{for some} \ 1\leq i\leq m).$$ In particular, it follows that $B_k(u_1,\ldots,u_m)=\sum_{i=1}^mB_k(u_i)$.

Let $I$ be a strongly stable ideal. In the sequel we will call it a Borel ideal.  Then for $k\gg 0$, $I$ is $k$-Borel, since large  $k$ imposes no conditions on the exponents of the generators. It follows that  for any monomials $u_1,\ldots,u_m$, there is a  unique smallest Borel  ideal, denoted $B(u_1,\ldots,u_m)$, which contains   $u_1,\ldots,u_m$. 

\medskip
For the monomials 
$u=x_{i_1}\cdots x_{i_d}$  and $v=x_{j_1}\cdots x_{j_d}$ with $i_1\leq \cdots \leq i_d$ and $j_1\leq \cdots \leq j_d$, we set  $v\preceq u$ if $j_{\ell}\leq i_{\ell}$ for any $1 \leq \ell \leq d$.
The following easy but useful lemma explains the generators of a principal Borel ideal. 

\begin{Lemma} \label{lemorder}
Let  $u$  and $v$ be monomials of the same degree. Then $v\in B(u)$ if and only if $v\preceq u$. 
\end{Lemma}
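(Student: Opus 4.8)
The plan is to prove the two implications separately, in both cases reducing everything to the observation that a single elementary Borel move $w\mapsto x_i(w/x_j)$ with $i<j$ amounts, on sorted exponent vectors, to decrementing one entry and re-sorting, and that this operation is monotone with respect to $\preceq$.

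For the implication $v\preceq u\Rightarrow v\in B(u)$, I would induct on the nonnegative integer $\delta(u,v)=\sum_{\ell=1}^{d}(i_\ell-j_\ell)$, where $d=\deg u$. If $\delta(u,v)=0$ then $u=v\in B(u)$. If $\delta(u,v)>0$, pick the largest index $\ell$ with $j_\ell<i_\ell$; then $i_\ell\geq 2$, and I set $u'=x_{i_\ell-1}(u/x_{i_\ell})$. Since $B(u)$ is strongly stable and $i_\ell-1<i_\ell\in\supp(u)$, we get $u'\in B(u)$, hence $B(u')\subseteq B(u)$ by minimality of $B(u')$. The routine point is to check that, after re-sorting the exponents of $u'$, one still has $v\preceq u'$: the sorted exponent vector of $u'$ agrees with that of $u$ except that the leftmost entry equal to $i_\ell$, say in position $p\leq\ell$, drops to $i_\ell-1$, and the $\preceq$-condition there holds because $j_p\leq j_\ell\leq i_\ell-1$. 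Since moreover $\delta(u',v)=\delta(u,v)-1$, the inductive hypothesis gives $v\in B(u')\subseteq B(u)$.

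For the converse $v\in B(u)\Rightarrow v\preceq u$, I would introduce the monomial ideal $I$ generated by all monomials $w$ of degree $d=\deg u$ with $w\preceq u$. Clearly $u\in I$, since $u\preceq u$. The key step is that $I$ is strongly stable: if $w$ is a degree-$d$ monomial with $w\preceq u$ and $x_j\mid w$, $i<j$, then passing from $w$ to $x_i(w/x_j)$ lowers one exponent, and re-sorting a nondecreasing sequence after lowering one of its entries weakly lowers every sorted coordinate; combining this with $w\preceq u$ yields $x_i(w/x_j)\preceq u$, so $x_i(w/x_j)\in I$. By minimality, $B(u)\subseteq I$, hence $v\in I$; and since $I$ is generated in degree $d=\deg v$ and $v$ is a monomial, $v$ must be one of the generators of $I$, i.e.\ $v\preceq u$.

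The only real obstacle is the bookkeeping in the two re-sorting arguments; conceptually the statement is immediate once one observes that Borel moves act monotonically on sorted exponent vectors and that the down-set $\{w\preceq u\}$ is stable under such moves.
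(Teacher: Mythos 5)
Your proof is correct and follows essentially the same route as the paper: the forward implication by induction on $\delta(u,v)=\sum_{\ell}(i_\ell-j_\ell)$ using a single Borel exchange, and the converse by checking that the ideal generated by all degree-$d$ monomials $w$ with $w\preceq u$ is strongly stable, so that it contains $B(u)$. The only cosmetic difference is that you decrement the entry at the \emph{largest} index with $j_\ell<i_\ell$, which forces the small re-sorting argument you sketch, whereas the paper takes the \emph{smallest} such index so that the decremented sequence remains sorted automatically.
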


\begin{proof}
Let $u=x_{i_1}\cdots x_{i_d}$, $v=x_{j_1}\cdots x_{j_d}$ and  $v\preceq u$. Then the number $\delta(v,u)= \sum_{k=1}^d (i_k-j_k)$ is non-negative and $\delta(v,u)=0$ if and only of $u=v$. We proceed by induction on $\delta(v,u)$ to show that $v\in B(u)$. If $\delta(v,u)=0$, then the assertion is trivial. Assume now that $\delta(v,u)>0$. Then  $i_k-j_k>0$ for some $k$. Let $k$ be the smallest such integer. Then $i_{k-1}=j_{k-1}<j_k<i_k$ and it follows that $u'=x_{i_k-1}(u/x_{i_k})\in B(u)$. Furthermore,  $v\preceq u'\prec u$ and $\delta(v,u')<\delta(v,u)$. Our induction hypothesis implies that $v\in B(u')\subseteq B(u)$.
Conversely, let $v\in B(u)$. It can be easily seen that the ideal $I=(w\in\Mon(S):\ w\preceq u)$ is a Borel ideal containing $u$. This implies that $B(u)\subseteq I$ and hence $v\preceq u$.
\end{proof}

\medskip
For a monomial ideal $I$, we set $I^{\leq k}=(u\in G(I): \ u\  \textrm{is $k$-bounded})$ and call it the  {\em $k$-bounded part of $I$}.
The next result shows that a $k$-bounded monomial ideal $I$ is a $k$-Borel ideal if and only if it is the $k$-bounded part of a Borel ideal. 

\begin{Lemma}
\label{boundedpart}
Let $u_1,\ldots,u_m$ be $k$-bounded monomials. Then $$B_k(u_1,\ldots,u_m)= B(u_1,\ldots,u_m)^{\leq k}.$$ 
\end{Lemma}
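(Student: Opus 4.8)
The plan is to prove the two inclusions separately, using the characterization of $B_k(u_1,\ldots,u_m)$ as the smallest $k$-Borel ideal containing $u_1,\ldots,u_m$ together with Lemma~\ref{lemorder} describing the generators of a principal Borel ideal. For the inclusion $B_k(u_1,\ldots,u_m)\subseteq B(u_1,\ldots,u_m)^{\leq k}$, first I would verify that $B(u_1,\ldots,u_m)^{\leq k}$ is itself a $k$-Borel ideal: its generators are the $k$-bounded monomials among $G(B(u_1,\ldots,u_m))$, and if $w$ is such a generator, $j\in\supp(w)$, $i<j$ with $w'=x_i(w/x_j)$ again $k$-bounded, then $w'\in B(u_1,\ldots,u_m)$ (Borel property), so $w'$ is divisible by some $k$-bounded generator of $B(u_1,\ldots,u_m)$ — indeed $w'$ itself is $k$-bounded, so the generator dividing it is $k$-bounded — hence $w'\in B(u_1,\ldots,u_m)^{\leq k}$. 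Since $B(u_1,\ldots,u_m)^{\leq k}$ is a $k$-Borel ideal containing $u_1,\ldots,u_m$ (each $u_i$ is $k$-bounded and lies in $B(u_1,\ldots,u_m)$), minimality of $B_k(u_1,\ldots,u_m)$ gives the inclusion.

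For the reverse inclusion $B(u_1,\ldots,u_m)^{\leq k}\subseteq B_k(u_1,\ldots,u_m)$, it suffices to show every $k$-bounded monomial $w\in G(B(u_1,\ldots,u_m))$ lies in $B_k(u_1,\ldots,u_m)$. Since $B(u_1,\ldots,u_m)=\sum_i B(u_i)$, such a $w$ is divisible by some monomial $v\in B(u_i)$ with $\deg v=\deg u_i$; as $w$ is $k$-bounded, so is $v$. By Lemma~\ref{lemorder}, $v\preceq u_i$. The key step is then to show that $v\in B_k(u_i)$, i.e.\ $v\preceq_k u_i$. I would mimic the induction on $\delta(v,u_i)=\sum_k(i_k-j_k)$ used in the proof of Lemma~\ref{lemorder}: when $\delta>0$ one picks the smallest index $k$ with $j_k<i_k$, and the intermediate monomial $u'=x_{i_k-1}(u_i/x_{i_k})$ obtained there satisfies $v\preceq u'\prec u_i$ with smaller $\delta$. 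The point is that since $v$ is $k$-bounded and $v\preceq u'$, one can check that the single Borel move from $u_i$ to $u'$ stays $k$-bounded — the exponent of $x_{i_k-1}$ in $u'$ cannot exceed the exponent of $x_{i_k-1}$ in $v$ plus what is needed, which is bounded by $k$ because... this is exactly the subtle point. More carefully: one should choose at each stage the Borel move that decreases $\delta(v,u)$ while keeping all intermediate monomials $\succeq v$, and because $v$ is $k$-bounded and every intermediate monomial $w$ satisfies $v\preceq w\preceq u_i$, one needs that $w$ is automatically $k$-bounded. This last claim is the main obstacle: it is \emph{not} true that $v\preceq w\preceq u$ with $v,u$ $k$-bounded forces $w$ $k$-bounded in general, so the induction must be set up to produce a specific chain of $k$-bounded monomials, not an arbitrary one.

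To handle this obstacle I would argue as follows. Given $v\preceq u_i$ with $v$ $k$-bounded, construct the chain greedily from the \emph{bottom}: among all $k$-bounded monomials $w$ with $v\preceq w\preceq u_i$, the set is nonempty (it contains $v$) and finite; if $w\neq u_i$ take the smallest index $k$ where the exponent vectors of $w$ and $u_i$ first disagree in the $\preceq$-sense and apply a single upward Borel move $w\mapsto x_p(w/x_q)$ with $p<q$ that keeps $\preceq u_i$; one checks this move preserves $k$-boundedness precisely because it only increases the exponent of a variable $x_p$ whose exponent in $u_i$ is at least one more than in $w$, and $u_i$ is $k$-bounded, so the new exponent of $x_p$ is at most its exponent in $u_i$, hence $\leq k$. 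This produces a chain $v=w_0\prec w_1\prec\cdots\prec w_r=u_i$ of $k$-bounded monomials with each step an upward Borel move, which shows exactly that $v\in B_k(u_i)\subseteq B_k(u_1,\ldots,u_m)$. Running the argument down from $w$ (the original $k$-bounded generator) through $v$ completes the proof. Finally I would remark that once both inclusions are established, the statement $B_k(u_1,\ldots,u_m)=B(u_1,\ldots,u_m)^{\leq k}$ follows, and in particular one recovers that $B_k(u_1,\ldots,u_m)=\sum_{i=1}^m B_k(u_i)$ consistently with the decomposition noted before the lemma.
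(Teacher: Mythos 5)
Your overall architecture is fine: the first inclusion (showing $B(u_1,\ldots,u_m)^{\leq k}$ is a $k$-Borel ideal containing $u_1,\ldots,u_m$ and invoking minimality) is correct as written. The gap is in the second inclusion, at exactly the point you flag as "the subtle point" and then do not actually resolve. Your reduction is right: everything comes down to showing that if $v\preceq u_i$ with $v,u_i$ both $k$-bounded of the same degree, then $v\in B_k(u_i)$, i.e.\ there is a chain of single Borel moves from $v$ up to $u_i$ all of whose intermediate monomials are $k$-bounded. But the chain you construct does not do this. Working at the \emph{smallest} position where the sorted exponent sequences of $w$ and $u_i$ disagree, the natural upward move need not increase the exponent of a variable whose exponent in $u_i$ strictly exceeds its exponent in $w$ -- that inequality is precisely what you assert without proof, and it can fail. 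Concretely, take $k=1$, $u_i=x_2x_3$, $v=w=x_1x_2$: the first disagreement is in position $1$, the corresponding move raises $x_1$ to $x_2$ and produces $x_2^2$, which is not $1$-bounded, and indeed $\deg_{x_2}(u_i)=1=\deg_{x_2}(w)$, so your claimed bound on the new exponent does not hold. (There is also a notational slip: $w\mapsto x_p(w/x_q)$ with $p<q$ is a \emph{downward} move, not an upward one.) So as written the induction can leave the $k$-bounded world and the proof of the inclusion $B(u_1,\ldots,u_m)^{\leq k}\subseteq B_k(u_1,\ldots,u_m)$ is incomplete.

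The claim itself is true and can be repaired by moving at the \emph{largest} disagreement position instead: if $w=x_{j_1}\cdots x_{j_d}\prec u_i=x_{\iota_1}\cdots x_{\iota_d}$ (weakly increasing indices, $j_\ell\leq\iota_\ell$) and $\ell$ is the largest position with $j_\ell<\iota_\ell$, replace the occurrence of $x_{j_\ell}$ in $w$ by $x_{\iota_\ell}$. Since $j_m\leq j_\ell<\iota_\ell$ for $m\leq\ell$ and $j_m=\iota_m$ for $m>\ell$, one gets $\deg_{x_{\iota_\ell}}(w)<\deg_{x_{\iota_\ell}}(u_i)\leq k$, the new monomial is still $\preceq u_i$, it is $k$-bounded, and $\delta$ drops, so the induction closes. (In my example this gives the chain $x_1x_2\to x_1x_3\to x_2x_3$.) Note that this hard step is exactly what the paper outsources: its proof of the lemma simply cites \cite[Lemma~2.4]{HLR} for the principal case $B(u)^{\leq k}=B_k(u)$ and then concludes by additivity, using $B(u_1,\ldots,u_m)=\sum_i B(u_i)$, $\bigl(\sum_i B(u_i)\bigr)^{\leq k}=\sum_i B(u_i)^{\leq k}$ and $B_k(u_1,\ldots,u_m)=\sum_i B_k(u_i)$; your attempt is essentially a self-contained proof of that cited lemma, and with the corrected choice of move it would go through.
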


\begin{proof}
Let $u$ be a $k$-bounded monomial. By  \cite[Lemma 2.4]{HLR}, $B(u)^{\leq k}=B_k(u)$. Therefore, since $B(u_1,\ldots,u_m)=\sum_{i=1}^mB(u_i)$,  it follows that 
\[
B(u_1,\ldots,u_m)^{\leq k}=(\sum_{i=1}^mB(u_i))^{\leq k}=\sum_{i=1}^mB(u_i)^{\leq k}=\sum_{i=1}^mB_k(u_i)=B_k(u_1,\ldots,u_m). 
\]
\end{proof}

We use the following fact repeatedly in the later proofs.

\begin{Corollary}
 \label{partial}
Let $k$ be a positive integer and $u$ and $v$ be $k$-bounded monomials of the same degree. Then 
 $v \preceq_{k}u$ if and only if $v\preceq u$.
 \end{Corollary}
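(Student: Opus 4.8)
The statement to prove is Corollary~\ref{partial}: for $k$-bounded monomials $u,v$ of the same degree, $v\preceq_k u$ if and only if $v\preceq u$. The plan is to reduce this to results already established in the excerpt, chiefly Lemma~\ref{lemorder} and Lemma~\ref{boundedpart}. By definition, $v\preceq_k u$ means $v\in B_k(u)$, while by Lemma~\ref{lemorder}, $v\preceq u$ is equivalent to $v\in B(u)$. So the claim is exactly the equality of membership in $B_k(u)$ and in $B(u)$, for $k$-bounded monomials $v$ of the same degree as $u$.

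First I would apply Lemma~\ref{boundedpart} (with a single Borel generator $u$), which gives $B_k(u)=B(u)^{\leq k}$. Since $B(u)^{\leq k}$ is by definition the ideal generated by the $k$-bounded elements of $G(B(u))$, we have $B_k(u)\subseteq B(u)$ trivially; this already yields the implication $v\preceq_k u \implies v\preceq u$ via Lemma~\ref{lemorder}. For the converse, suppose $v\preceq u$, so $v\in B(u)$ by Lemma~\ref{lemorder}. Since $v$ and $u$ have the same degree, $v$ is a minimal generator of $B(u)$ (all elements of a monomial ideal of the same degree as a generator that divide nothing smaller are generators — more precisely, every monomial in $B(u)$ of minimal degree lies in $G(B(u))$, and $u$ is of minimal degree in $B(u)$ since $B(u)$ is generated in degrees $\geq \deg u$... actually $B(u)$ is generated precisely in degree $\deg u$). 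Thus $v\in G(B(u))$, and as $v$ is $k$-bounded by hypothesis, $v\in B(u)^{\leq k}=B_k(u)$, i.e. $v\preceq_k u$.

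The main point to get right is the assertion that a monomial of $B(u)$ having the same degree as $u$ is automatically a minimal generator: this holds because every generator of $B(u)$ has degree $\deg u$ (the Borel moves $x_i(w/x_j)$ preserve degree), so nothing in $B(u)$ has degree strictly less than $\deg v=\deg u$, hence $v$ cannot be a proper multiple of a generator and must itself be a generator. This is the only slightly delicate step; everything else is a direct invocation of the two lemmas. I expect no real obstacle — the corollary is essentially a bookkeeping consequence of Lemma~\ref{boundedpart} together with Lemma~\ref{lemorder}, and the proof should be only a few lines.
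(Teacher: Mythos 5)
Your proposal is correct and follows essentially the same route as the paper, which deduces the corollary directly from Lemma~\ref{lemorder} together with Lemma~\ref{boundedpart} (or \cite[Lemma 2.4]{HLR}). The only difference is that you spell out the detail the paper leaves implicit, namely that a $k$-bounded monomial of degree $\deg u$ lying in $B(u)$ is automatically a minimal generator and hence lies in $B(u)^{\leq k}=B_k(u)$; this is accurate, since $B(u)$ is generated entirely in degree $\deg u$.
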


\begin{proof}
We have  $v \preceq_{k}u$ if and only if $v\in B_k(u)$. Also by ~\Cref{lemorder}, $v\preceq u$ if and only if $v\in B(u)$. Then the desired results  follow from Lemma~\ref{boundedpart} (or even from  \cite[Lemma 2.4]{HLR}). 
\end{proof}

For a monomial $u$, we set 
\[
\min(u)=\min \{i\: i\in \supp{(u)}\}\ \ \text{and \ } \max(u)=\max \{i\: i\in \supp{(u)}\}.
\]
The next result describes the height of a $k$-Borel ideal in terms of its Borel generators.

\begin{Proposition}
\label{height}
Let $I=B_k(u_1,\ldots, u_m)$. Then 
\[
\height{(I)}=\max\{\min(u_1),\ldots, \min(u_m)\}.
\]
\end{Proposition}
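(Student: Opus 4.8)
The plan is to compute $\height(I)$ by identifying a minimal prime of $I$ of the right height and, on the other side, by exhibiting an ideal of that height containing a power of every generator (or by locating a regular sequence on $S/I$ of the complementary length). Set $c=\max\{\min(u_1),\ldots,\min(u_m)\}$ and, after relabeling, assume $c=\min(u_1)$, so that $x_1,\ldots,x_{c-1}$ do not divide $u_1$ but $x_c\mid u_1$.

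First I would prove the inequality $\height(I)\le c$. The prime ideal $\pp=(x_c,x_{c+1},\ldots,x_n)$ has height $n-c+1$... which is the wrong quantity, so instead the cleaner route is: it suffices to show that $\pp=(x_1,\ldots,x_c)$ is a minimal prime of $I$, equivalently that $I\subseteq\pp$ and $I_{\pp}$ is $\pp S_{\pp}$-primary of the expected shape — but in fact all we need is $I\subseteq \pp$ together with the fact that no prime of smaller height contains $I$. The containment $I\subseteq(x_1,\ldots,x_c)$ is immediate once we know that every monomial $v\in G(I)$ satisfies $\min(v)\le c$: indeed, since $v\preceq_k u_i$ for some $i$ by the description of $B_k(u_1,\ldots,u_m)$ preceding the proposition, Corollary~\ref{partial} gives $v\preceq u_i$, and by the definition of $\preceq$ the smallest-index variable of $v$ has index $\le\min(u_i)\le c$; hence $x_j\mid v$ for some $j\le c$, so $v\in(x_1,\ldots,x_c)$. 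To see that $(x_1,\ldots,x_c)$ is actually a \emph{minimal} prime, I would argue that the variables $x_{c+1},\ldots,x_n$ form a regular sequence on $S/I$: since $\min(u_1)=c$, the generator $u_1$, and more generally (after a Borel move lowering indices) the pure power $x_c^{\deg u_1}$... is not quite in $I$, so rather I would use the following. By Lemma~\ref{boundedpart}, $I=B(u_1,\ldots,u_m)^{\le k}$, and for a Borel ideal $J=B(u_1,\ldots,u_m)$ it is classical (and easy from Lemma~\ref{lemorder}) that $\height(J)=\max_i\min(u_i)=c$, with $(x_1,\ldots,x_c)$ a minimal prime; passing to the $k$-bounded part only removes generators, so $I\subseteq J$ gives $\height(I)\le\height\sqrt{J}$... which goes the wrong way.

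Because of that subtlety, the argument I would actually commit to for $\height(I)\ge c$ runs as follows: show directly that every associated prime, hence every minimal prime $Q$ of $I$, contains $\{x_1,\ldots,x_{c}\}\cap$ (something), by showing $I$ is not contained in any monomial prime generated by fewer than $c$ variables. Suppose $I\subseteq (x_{j_1},\ldots,x_{j_{c-1}})$ with $j_1<\cdots<j_{c-1}$. Then in particular $u_1\in(x_{j_1},\ldots,x_{j_{c-1}})$, so some $x_{j_\ell}\mid u_1$; but also, applying Borel moves to $u_1$ inside $B_k(u_1)\subseteq I$ we can decrease each factor's index, and since $\min(u_1)=c$ the monomial $u_1$ has \emph{all} its support in $\{c,c+1,\ldots\}$, yet every $k$-bounded monomial $v\preceq u_1$ also has $\min(v)\le$ the index we choose. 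Concretely, pick the $k$-bounded monomial $v_1\preceq u_1$ obtained by moving as much weight as possible to the left under the $k$-bound; one checks $v_1\in I$ and $\supp(v_1)=\{1,2,\ldots,r\}$ or similar with the property that $v_1\notin(x_{j_1},\ldots,x_{j_{c-1}})$ unless $\{1,\ldots,c\}\subseteq\{j_1,\ldots,j_{c-1}\}$, a contradiction as the latter set has size $c-1$. Thus no prime of height $<c$ contains $I$, giving $\height(I)\ge c$; combined with the minimal prime $(x_1,\ldots,x_c)$ of height $c$ from the first part, $\height(I)=c$.

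The main obstacle I anticipate is the interaction of the $k$-boundedness with the Borel moves in the lower bound: one must produce, for the Borel generator $u_i$ realizing the maximum $c$, an explicit $k$-bounded monomial in $B_k(u_i)$ whose support is an initial segment $\{1,\ldots,t\}$ long enough (roughly, with $t$ determined by $\deg u_i$ and $k$ so that $t\ge c$, which holds precisely because $\min(u_i)=c$ forces $u_i$ to already "waste" the slots $1,\ldots,c-1$). Making this bookkeeping precise — and verifying that this monomial is genuinely $k$-bounded and genuinely in $I$ — is the only real content; the rest is the formal primes-vs-height dictionary plus Corollary~\ref{partial} and Lemma~\ref{boundedpart} to translate between $\preceq_k$ and $\preceq$. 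I would also remark that one can shortcut the whole lower bound by invoking $\height(I)=\height(I^{\mathrm{pol}})$-type reductions or the known height formula for strongly stable ideals together with Lemma~\ref{boundedpart}, but the direct combinatorial argument above keeps the proof self-contained.
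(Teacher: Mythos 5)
Your upper bound ($I\subseteq(x_1,\ldots,x_c)$ via Corollary~\ref{partial} and the definition of $\preceq$) and your reduction to the single Borel generator realizing the maximum (work with $B_k(u_1)\subseteq I$) are exactly what the paper does. The genuine gap is in the lower bound. Your committed argument exhibits \emph{one fixed} witness monomial $v_1\preceq u_1$, "obtained by moving as much weight as possible to the left," with support an initial segment $\{1,\ldots,r\}$, and claims $v_1\notin(x_{j_1},\ldots,x_{j_{c-1}})$ unless $\{1,\ldots,c\}\subseteq\{j_1,\ldots,j_{c-1}\}$. That claim is false: a monomial with support $\{1,\ldots,r\}$ lies in the prime as soon as a single $j_\ell\leq r$, not only when the prime contains all of $x_1,\ldots,x_c$. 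Moreover, pushing weight left under the $k$-bound changes the exponent profile and can shrink the support well below $c$: for $k=2$ and $u_1=x_3x_4$ (so $c=3$), the leftmost $2$-bounded monomial below $u_1$ is $x_1^2$, which lies in the height-$2$ prime $(x_1,x_2)$; the correct witness against that prime is, e.g., $x_3^2\in B_2(x_3x_4)$, and it depends on which variables generate the prime. So no single precomputed monomial can do the job; the witness must be chosen adaptively to dodge the variables of the given prime $Q$, and that adaptive construction is the actual content of the proof, which your proposal does not supply.

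The paper's argument fills exactly this hole. Writing $u_1=x_{i_1}^{a_1}\cdots x_{i_d}^{a_d}$ with $i_1<\cdots<i_d$ and $c=i_1$, and given a monomial prime $Q\supseteq I$ with $\mu(Q)<c$, one constructs inductively indices $j_1<\cdots<j_d$ with $j_r\leq i_r$ and $x_{j_r}\notin Q$ for all $r$: at each step the set $\{1,\ldots,i_{t+1}\}\setminus\{j_1,\ldots,j_t\}$ has at least $i_1$ elements because $i_{t+1}\geq i_1+t$, so since $Q$ has fewer than $i_1$ generators there is a new admissible index, which is then inserted into the sequence (reindexing preserves $j_r\leq i_r$). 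The monomial $v=x_{j_1}^{a_1}\cdots x_{j_d}^{a_d}$ carries the \emph{same} exponents as $u_1$, hence is automatically $k$-bounded, satisfies $v\preceq u_1$, hence $v\in I$ by Corollary~\ref{partial}, and avoids $Q$ by construction --- a contradiction. This is the bookkeeping you flagged as "the only real content," and it is precisely the part your proposal gets wrong; repairing it essentially forces you into the paper's inductive construction (keep the exponent vector, relocate the support leftward while avoiding $Q$), rather than any "initial segment" normal form.
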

\begin{proof}
First we consider the case $m=1$, and hence we may assume that $I=B_k(u)$ where  $u=x_{i_1}^{a_1} x_{i_2}^{a_2}\cdots x_{i_d}^{a_d}$ with $i_1<i_2<\cdots<i_d$ and $a_1>0$. We show that $\height(I)=i_1$. Since $I$ is a monomial ideal, all minimal prime ideals of $I$ are generated by variables. Let $P=(x_1,x_2,\ldots, x_{i_1})$, and let $v\in G(I)$. By~\Cref{partial}, $v\preceq u$. It follows that $1\leq \min(v) \leq i_1$ by the definition of the partial order $\preceq$. Therefore  $v\in P$ for all $v\in I$. This shows that $I\subseteq P$ and proves that $\height(I)\leq i_1$.

Suppose now $\height(I)<i_1$. Then there exists a monomial prime ideal $Q$ containing $I$ with less than $i_1$ generators. We show that this is not possible. Indeed, we show that there exist positive integers $j_1< j_2<\cdots< j_d$ with $j_k\leq i_k$ for all $k$ such that $x_{j_k}\notin Q$ for $k=1,\ldots, d$. Then we get $v=x_{j_1}^{a_1} x_{j_2}^{a_2}\cdots x_{j_d}^{a_d}\in I$ with $v\notin Q$, which is a contradiction.

We construct $j_1<j_2<\cdots<j_t$ for $t=1,\ldots,d$, inductively. Since $\mu(Q)<i_1$, there exists $j_1\leq i_1$ such that $x_{j_1}\notin Q$. Assume now that the sequence $j_1, \ldots, j_t$ with $t<d$, has already been constructed, satisfying $j_1<j_2<\cdots<j_t$,   $j_r\leq i_r$  and $x_{j_r}\notin Q$ for $r=1,\ldots, t$. Since $i_{t+1}\geq i_1+t$, it follows that $A=\{1,\ldots,  i_{t+1}\}\setminus \{j_1,\ldots, j_t\}$ has at least $i_1$ elements. Therefore, since $\mu(Q)< i_1$, there exists $\l\in A$ such that $x_{\l}\notin Q$ and $l\leq i_{t+1}$. If $\l>j_t$, then we can choose $j_{t+1}=\l$. Otherwise, there exists a smallest integer $r\leq t$ such that $\l<j_r$. Then we rename the elements $j_1,\ldots,j_t$, and let $j_s'=j_s$ for $s<r$, $j_r'=\l$ and $j_{s+1}'=j_s$ for $s=r, \ldots ,t$. This new sequence of length $t+1$ satisfies all the requirements.  

Finally, we deal with the case that $m>1$. Let $h=\max\{\min(u_1),\ldots, \min(u_m)\}$. Then from what we have seen before, it follows that $I\subseteq (x_1,\ldots,x_h)$. This shows that $\height{(I)}\leq h$. Assume that $\height{(I)}< h$. Then there exists a monomial prime ideal $Q$ containing $I$ with $\mu(Q)<h$. Let $r$ be such that $\min(u_r)=h$. Since $B_k(u_r)\subset I\subseteq Q$, we  obtain a contradiction to the result we proved for $m=1$. 
\end{proof}

\section{The resolution of $k$-Borel ideals}

Let $I\subset S=K[x_1,\ldots,x_n]$ be a graded  ideal. The graded Betti numbers of $I$ can be computed by means
of the Koszul homology $H_i(\xb; S/I)$ with $ \xb=x_1,\ldots,x_n$. Indeed,  there is an isomorphism $H_i(\xb; S/I)\iso \Tor_i^S(K,S/I)$ of graded $K$-vector spaces. This isomorphism is even $\ZZ^n$-graded. In order to abbreviate notation, we set $H_i(j)= H_i(x_n,x_{n-1}\ldots,x_j; S/I)$. With this notation introduced we have $\beta_{i,j}(S/I)=\dim_K H_i(1)_j$. In order to compute the graded  Betti numbers of $S/I$, we determine a $K$-basis of $H_i(1)_\ab$ for each $i$  and $\ab\in \ZZ^n$. We will apply an inductive argument. For this reason, it is advisable to even determine a $K$-basis of the $K$-vector space $H_i(j)_\ab$ for all $i$, $j$ and $\ab\in \ZZ^n$. The homology class of a cycle $z$ in the Koszul complex $K_i(j)=K_i(x_n,\ldots,x_j;S/I)$ is an element of $H_i(j)$ and will be denoted  by $[z]_j$. When $j=1$, we simply write $[z]$.

The Koszul complex $K(\xb;S/I)$ is a complex of free $S/I$-modules. Let  $e_1,\ldots, e_n$ be the basis of $K_1(\xb;S/I)$ with $\partial (e_i)=x_i$ for all $i$. Then the elements $e_F$ with $F\subset [n]$ and $|F|=i$ form  the basis of $K_i(\xb;S/I)$. Here, for $F=\{j_1<j_2<\cdots <j_i\}$,  $e_F$ denotes  the wedge product $e_{j_1}\wedge \cdots \wedge e_{j_i}$ (cf. \cite[Appendix A.3]{HH}).  In $K_i(\xb;S/I)$ each basis element is annihilated by $I$. Thus for any $f\in S$,  $fe_F=\bar{f}e_F$, where  $\bar{f}=f+I$.
We call a vector $\ab\in \ZZ^n$, {\em $k$-bounded}, if $\ab =(a_1,\ldots,a_n)$ with $0\leq a_i\leq k$ for $i=1\ldots,n$. The multidegree of a monomial $u$ is denoted by $\Deg(u)$. 

\begin{Theorem}
\label{Koszulcycles}
Let $I\subset S=K[x_1,\ldots,x_n]$ be a $k$-Borel ideal,  and  let $\ab\in \ZZ^n$ be a $k$-bounded vector.  Then for $i>1$, $H_i(j)_\ab$ has a $K$-basis consisting of the homology classes of the cycles
\[
u'e_F\wedge e_{m(u)}, \quad u\in G(I) \quad\text{with}
\]
\[
F\subseteq [n], \quad |F| =i-1, \quad j\leq \min(F), \quad \max(F)< m(u) \quad\text{and} \quad \Deg(\xb^Fu)=\ab.
\]
Here, $m(u)=\max(u)$,  $u'= u/x_{m(u)}$, $\min(F)=\min\{i\:i\in F\}$ and $\max(F)=\max\{i\:i\in F\}$. Moreover, the $K$-basis of $H_1(j)_a$ consists of
the homology classes of the cycles $u'e_{m(u)}$, $u\in G(I)$ with $j\leq m(u)$ and $\Deg(u)=a$.
\end{Theorem}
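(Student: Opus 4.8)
The statement asserts that a specific set of monomial Koszul cycles forms a $K$-basis of $H_i(j)_{\ab}$ for $k$-bounded $\ab$. The natural approach is a double induction: an outer induction on the number of variables involved (equivalently, descending induction on $j$ from $j=n$ down to $j=1$), and for each fixed $j$ an analysis via the long exact homology sequence obtained by splitting off the variable $x_j$. Concretely, one uses the standard short exact sequence of Koszul complexes
\[
0 \To K(j+1) \To K(j) \To K(j+1)[-1] \To 0,
\]
where $K(j)=K(x_n,\ldots,x_j;S/I)$, which yields a long exact sequence relating $H_i(j)$, $H_i(j+1)$ and $H_{i-1}(j+1)$, with connecting map given (up to sign) by multiplication by $x_j$. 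The base case is $j=n$, where $K(n)$ is the Koszul complex on the single element $x_n$ over $S/I$: here $H_1(n)_{\ab}$ is $(0:_{S/I}x_n)_{\ab}$ and $H_i(n)=0$ for $i>1$, and one checks directly that the claimed cycles $u'e_{m(u)}$ with $m(u)=n$ span this annihilator in multidegree $\ab$ — this is where the $k$-Borel hypothesis first enters, since $x_n\cdot \xb^{\ab}\in I$ with $\ab$ being $k$-bounded forces, by the exchange property, a generator $u\in G(I)$ with $m(u)=n$ dividing $x_n\xb^{\ab}$.

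The inductive step is the heart of the argument. Assuming the basis description for $H_{\bullet}(j+1)$, one must extract the basis for $H_{\bullet}(j)$. Every cycle class in $H_i(j)$ can be written using the decomposition $K_i(j)=K_i(j+1)\oplus e_j\wedge K_{i-1}(j+1)$; writing a cycle as $z_0 + e_j\wedge z_1$ and using $\partial z = 0$ one gets that $z_1$ is a cycle in $K_{i-1}(j+1)$ and $z_0$ satisfies $\partial z_0 = \pm x_j z_1$. The long exact sequence then says: classes coming from $H_i(j+1)$ via the inclusion, together with classes whose $e_j$-component is a cycle $z_1$ representing an element of $H_{i-1}(j+1)$ killed by $x_j$. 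The first family, after applying the induction hypothesis, gives exactly the claimed cycles with $j+1\le \min(F)$. For the second family one must show: (a) the elements of $H_{i-1}(j+1)$ annihilated by $x_j$ are, in the $k$-bounded multidegrees, spanned by the classes $[u'e_G\wedge e_{m(u)}]_{j+1}$ with $\min(G)\ge j+1$ and $j\notin \{$indices forced to appear$\}$, lifting to cycles $u'e_{\{j\}\cup G}\wedge e_{m(u)}$ in $K(j)$ — i.e. exactly the new cycles with $\min(F)=j$; and (b) no relations are introduced, i.e. the total collection remains linearly independent.

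For part (a), the key computation is: when does $x_j\cdot [u'e_G\wedge e_{m(u)}]_{j+1}=0$ in $H_{i-1}(j+1)$? One shows this holds precisely when either $j\in \supp(\xb^G u)$ already (in which case $x_j u'e_G\wedge e_{m(u)}$ is a boundary after an explicit homotopy, using the $k$-Borel property to produce the necessary generator — multiplying by $x_j$ and then exchanging $x_j$ for $x_{m(u)}$ or for some $x_\ell$ with $\ell\in G$ stays inside $I$ and $k$-bounded because $\ab$ is $k$-bounded), or more carefully: the multiplication map $x_j\colon H_{i-1}(j+1)_{\ab-\eb_j}\to H_{i-1}(j+1)_{\ab}$ has kernel spanned by the relevant classes. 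This is really a restatement, in Koszul-homology language, of the exchange property of $k$-Borel ideals, and the delicate point — the main obstacle — is bookkeeping the combinatorial condition $\max(F)<m(u)$ together with $k$-boundedness when performing these exchanges, making sure the homotopies land among the already-understood cycles and that the dimension count matches exactly (so that spanning plus the count forces a basis, giving linear independence for free). I would organize this by proving simultaneously that the proposed cycles are linearly independent (an easy leading-term argument: order generators and monomials suitably, each cycle $u'e_F\wedge e_{m(u)}$ has a distinguished monomial basis element $\xb^Fu\cdot e_{F\cup\{m(u)\}}$ not cancelled by boundaries in that multidegree) and that they span (via the long exact sequence as above), so that the two halves meet.

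Finally, assembling the $j=1$ case gives $\beta_{i,\ab}(S/I)=\dim_K H_i(1)_{\ab}$ as the number of pairs $(u,F)$ with $u\in G(I)$, $|F|=i-1$, $\max(F)<m(u)$ and $\Deg(\xb^F u)=\ab$, recovering the Eliahou–Kervaire-type formula; but that corollary is beyond the statement to be proved here. I expect the write-up to spend most of its length on the explicit homotopies witnessing that $x_j$-torsion classes are exactly the listed ones, since that is where the defining property of $k$-Borel ideals is used in an essential way, and where the constraint that $\ab$ be $k$-bounded is indispensable.
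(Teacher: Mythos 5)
Your plan follows essentially the same route as the paper's proof: backward induction on $j$ using the long exact sequence obtained by splitting off $x_j$, with the $k$-Borel exchange property together with the $k$-boundedness of $\ab$ invoked to show that multiplication by $x_j$ annihilates $H_{i-1}(j+1)_{\ab-\epsilon_j}$ (equivalently, that $H_i(j)_\ab\to H_{i-1}(j+1)_{\ab-\epsilon_j}$ is surjective), so the sequence splits and the two families of classes --- those with $\min(F)\geq j+1$ and the lifts $u'e_{F\cup\{j\}}\wedge e_{m(u)}$ --- assemble into the stated basis. The only differences are cosmetic: no explicit homotopies or separate leading-term independence argument are needed, since $x_ju'\in I$ makes the relevant products literally zero in the Koszul complex over $S/I$ and the resulting short exact sequences deliver the basis directly.
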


\begin{proof}
We first notice if $u'e_F\wedge e_{m(u)}$ is a cycle in $K_i(j)_\ab$ for a monomial $u\in I$,  then  $u'e_F\wedge e_{m(u)}=0$, if $u\not\in G(I)$. In fact, suppose that $u\not\in G(I)$. Then $u=wv$ with $v\in G(I)$ and $w\neq 1$ a monomial. If $m(w)\geq m(v)$, then $u'=w'v\in I$ and  $u'e_F\wedge e_{m(u)}=0$. Now, let $m(w)<m(v)$. Then $u'=wv'$ and there exists $j<m(u)$ such that $x_j$ divides $w$, and hence $u'=(w/x_j)(x_jv')$. Since $x_jv'$ divides $u$ and since $u$ is $k$-bounded, it follows that $x_jv'$ is also $k$-bounded. Finally, since $I$ is a $k$-Borel ideal and $x_jv'\preceq v$, we conclude that $x_jv'\in I$, and hence $u'\in I$. Thus $u'e_F\wedge e_{m(u)}=0$, as desired. The conclusion is that for cycles of the form $u'e_F\wedge e_{m(u)}$ whose homology class is not zero, we have $u\in G(I)$.

We prove the theorem for $H_i(j)$ by backward induction starting with $j=n$.
We have $H_i(n)_\ab=0$ if $i>1$ and $H_1(n)=(I:x_n)e_n$. It follows that if  $H_1(n)_\ab\neq 0$,  then there exist a (unique)  $u\in G(I)$ with $m(u)=n$ and $\Deg(u)=\ab$, and then $[u'e_n]_n$ is the $K$-basis of $H_1(n)_\ab$.   This proves the assertion for $j=n$. Now let $j<n$, and assume that the theorem holds  for $j+1$.

We proceed by induction on $i$ to show that $H_i(j)_\ab$ has the desired $K$-basis. In order to prove this result  for $i=1$, we consider the exact sequence of Koszul  homology
\[
H_2(j)_\ab\to H_{1}(j+1)_{\ab-\epsilon_j}\to   H_1(j+1)_\ab\to H_1(j)_\ab\to H_0(j+1)_{\ab-\epsilon_j}\to H_0(j+1)_\ab.
\]

Here, $\epsilon_j$ is the $j$th standard unit-vector.  Let $U= \Ker(H_0(j+1)_{\ab-\epsilon_j}\to H_0(j+1)_\ab)$ and $V= \Im(H_1(j+1)_\ab\to H_1(j)_\ab$). Then a $K$-basis of $V$ together with a preimage in $H_1(j)_\ab$ of a $K$-basis  of $U$ establishes a $K$-basis of $H_1(j)_\ab$.

The map $H_0(j+1)\to H_0(j+1)$ is multiplication by $x_j$, and
$H_0(j+1)=S/(I,x_{j+1},\ldots,x_n)=S'/I'$, where $S'=K[x_1,\ldots, x_j]$ and $I'$ is a monomial ideal with $$G(I')=\{u\in G(I)\: \; m(u)\leq j\}.$$
 Thus, if $U\neq 0$, then $U$ has the $K$-basis consisting  of the residue class of $u'$, where $u\in G(I)$  with $m(u)=j$ and $\Deg(u)=\ab$. Its preimage in $H_1(j)_\ab$ is  $[u'e_j]_1$.  By assumption, $H_1(j+1)_\ab $ is generated by the elements $[u'e_{m(u)}]_{j+1}$  with $j+1\leq m(u)$ and $\Deg(u)=\ab$. Since $H_1(j+1)_\ab\to H_1(j)_\ab$ maps $[u'e_{m(u)}]_{j+1}$ to  $[u'e_{m(u)}]_j\in  H_1(j)_\ab$, the proof for $i=1$ is completed. Here we use that $H_1(j+1)_\ab\to H_1(j)_\ab$ is injective, because $H_2(j)_\ab\to H_{1}(j+1)_{\ab-\epsilon_j}$ is surjective, see the claim below. The injectivity of the mentioned map implies that $H_1(j+1)_\ab$ is isomorphic to its image $V$.

\medskip
Now let $i>1$, and consider the exact sequence
\[
 H_{i+1}(j)_\ab\to    H_{i}(j+1)_{\ab-\epsilon_j} \to   H_i(j+1)_\ab\to H_i(j)_\ab\to H_{i-1}(j+1)_{\ab-\epsilon_j}\to  H_{i-1}(j+1)_{\ab}.
\]
By induction hypothesis, if $i-1\geq 1$, then  $H_{i-1}(j+1)_{\ab-\epsilon_j}$ is generated by the homology classes $[u'e_F\wedge e_{m(u)}]_{j+1}$, where $u\in G(I)$ is a monomial,
$|F| =i-1$, $j+1\leq \min(F)$, $\max(F)< m(u)$ and $\Deg(\xb^Fu)=\ab-\epsilon_j$, together with the homology classes  $[u'\wedge e_{m(u)}]_{j+1}$ with  $j+1\leq m(u)$ and $\Deg(u)=\ab-\epsilon_j$, if $i-1=1$.

We claim that the map $H_i(j)_\ab\to H_{i-1}(j+1)_{\ab-\epsilon_j}$ is surjective for all $i-1\geq 1$. Indeed, given a generators
$[u'e_F\wedge e_{m(u)}]_{j+1}$ of   $H_{i-1}(j+1)_{\ab-\epsilon_j}$, we consider the element  $z=u'e_G\wedge e_{m(u)}$ with $G=F\union \{j\}$. Note that $F=\emptyset $, if $i-1=1$. Applying the Koszul differential  $\partial$  to $z$, we get $\partial(z) =\pm e_j\wedge \partial(u'e_F\wedge e_{m(u)})\pm x_ju'e_F\wedge e_{m(u)}$.  The first summand is zero, since $u'e_F\wedge e_{m(u)}$ is a cycle representing the homology class $[u'e_F\wedge e_{m(u)}]_{j+1}$. Since $\Deg(\xb^F u) =\ab-\epsilon_j$, and since $\ab$ is $k$-bounded, it follows that the exponent of $x_j$ in $u$ is $<k$. The inequality $j\leq m(u)$ and the fact that $I$ is $k$-Borel imply that $x_ju'\in I$. Here we use again the $k$-Borel property. Therefore, also the second summand  $x_ju'e_F\wedge e_{m(u)}$ is zero. We conclude that $z$ is a cycle, and hence $[z]_j=[u'e_G\wedge e_{m(u)}]_j\in H_i(j)_\ab$. Since $[u'e_G\wedge e_{m(u)}]_j$ is mapped to $\pm [u'e_F\wedge e_{m(u)}]_{j+1}$ via the map $H_i(j)_\ab\to H_{i-1}(j+1)_{\ab-\epsilon_j}$, we see  that $H_i(j)_\ab\to H_{i-1}(j+1)_{\ab-\epsilon_j}$ is surjective, as desired.

Therefore, the previous exact sequence splits for all $i-1\geq 1$ into the short exact sequence
\[
0 \to   H_i(j+1)_\ab\to H_i(j)_\ab\to H_{i-1}(j+1)_{\ab-\epsilon_j}\to 0.
\]
This implies that $H_i(j)_\ab$ has a $K$-basis consisting of  the images of the basis  elements of $ H_i(j+1)_\ab$ together with the preimages of the basis elements of  $H_{i-1}(j+1)_{\ab-\epsilon_j}$. By induction hypothesis,   $ H_i(j+1)_\ab$  has the  basis elements
 $[u'e_F\wedge e_{m(u)}]_{j+1}$ satisfying the conditions described in the theorem. The image of $[u'e_F\wedge e_{m(u)}]_{j+1}$ in $H_i(j)_\ab$
 is $[u'e_F\wedge e_{m(u)}]_{j}$. Also by induction, the basis elements   $H_{i-1}(j+1)_{\ab-\epsilon_j}$ are  known to be $[u'e_F\wedge e_{m(u)}]_{j+1}$ with the side conditions given in the theorem. A  preimage of $[u'e_F\wedge e_{m(u)}]_{j+1}$ in $H_i(j)_\ab$  is $[u'e_G\wedge e_{m(u)}]_j$ with $G=F\union \{j\}$.  From this we see that $H_i(j)_\ab$ has a $K$-basis   as described in the theorem.
\end{proof}

\begin{Corollary}
\label{j=1}
Let $I\subset S=K[x_1,\ldots,x_n]$ be a $k$-Borel ideal.  Then for $i>0$, a basis of $H_i(\xb;S/I)$
is given by the homology classes $u'e_F\wedge e_{m(u)}$  with
\[
u\in G(I), \quad  |F| =i-1, \quad \text{$\max(F)< m(u)$  and  $\Deg(\xb^Fu)$ is $k$-bounded.}
\]
\end{Corollary}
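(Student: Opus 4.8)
The plan is to deduce Corollary~\ref{j=1} directly from Theorem~\ref{Koszulcycles} by specializing to the case $j=1$ and then repackaging the statement so that the $k$-bounded hypothesis is moved from the ambient multidegree $\ab$ onto the individual generators that appear. Concretely, recall that by definition $H_i(\xb;S/I)=H_i(1)=\Dirsum_{\ab\in\ZZ^n}H_i(1)_\ab$, so it suffices to produce a $K$-basis of each graded piece $H_i(1)_\ab$ and then take the union over all $\ab$. If $\ab$ is not $k$-bounded, I would first argue that $H_i(1)_\ab=0$: since $I$ is generated by $k$-bounded monomials, every monomial in $I$ has some exponent that can only be forced up by multiplication, and more to the point, the Koszul cycles $u'e_F\wedge e_{m(u)}$ contributing to homology have $u\in G(I)$ (by the first paragraph of the proof of Theorem~\ref{Koszulcycles}, the class is zero otherwise), so the multidegree $\Deg(\xb^F u)$ of any nonzero contribution is automatically $k$-bounded because $u$ is $k$-bounded and the $\xb^F$ part only involves squarefree exponents in slots outside $\supp(u)$ — wait, that is not quite automatic, so the cleaner route is: for non-$k$-bounded $\ab$, invoke that $(S/I)_\ab$ and the relevant Koszul pieces vanish, or simply note the basis described in the theorem is empty in that degree. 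So the real content is the union, over $k$-bounded $\ab$, of the bases furnished by Theorem~\ref{Koszulcycles} with $j=1$.

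Next I would carry out the specialization $j=1$. For $i>1$, Theorem~\ref{Koszulcycles} says $H_i(1)_\ab$ has the $K$-basis $\{[u'e_F\wedge e_{m(u)}]_1\}$ with $u\in G(I)$, $|F|=i-1$, $1\leq\min(F)$, $\max(F)<m(u)$, and $\Deg(\xb^F u)=\ab$. The condition $1\leq\min(F)$ is vacuous (every index is $\geq 1$), so it drops out. For $i=1$, the basis is $\{[u'e_{m(u)}]_1\}$ with $u\in G(I)$, $1\leq m(u)$ (again vacuous) and $\Deg(u)=\ab$; this is exactly the $i=1$ case of the asserted statement, with $F=\emptyset$ (so $|F|=i-1=0$ and the condition $\max(F)<m(u)$ read with $\max(\emptyset)=0$, or simply omitted). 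Taking the disjoint union over all $k$-bounded $\ab$, and using that for each such basis element the multidegree $\Deg(\xb^F u)$ is precisely the $\ab$ it lives in, I get the unified description: a basis of $H_i(\xb;S/I)$ indexed by pairs $(u,F)$ with $u\in G(I)$, $|F|=i-1$, $\max(F)<m(u)$, and $\Deg(\xb^F u)$ $k$-bounded. That last condition is the translation of ``$\ab$ is $k$-bounded'' back to the generator data, and it is needed because for $\ab$ that is \emph{not} $k$-bounded the homology vanishes (so no such pairs should be counted).

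I do not expect a serious obstacle here — this corollary is a direct reading-off of the theorem — but the one point that deserves a careful sentence is why imposing ``$\Deg(\xb^F u)$ is $k$-bounded'' is both necessary and sufficient to capture exactly the nonzero homology, i.e.\ why $H_i(1)_\ab=0$ for $\ab$ not $k$-bounded. For sufficiency this is immediate from Theorem~\ref{Koszulcycles}. For the vanishing, the quickest argument is that any Koszul cycle of the form $u'e_F\wedge e_{m(u)}$ with nonzero homology class has $u\in G(I)$ $k$-bounded and $F\sect\supp(u)=\emptyset$ would force the exponents of $\xb^F u$ to be those of $u$ (each $\leq k$) in $\supp(u)$-slots and $1$ elsewhere — hence $k$-bounded since $k\geq 1$; and cycles with $F\sect\supp(u)\neq\emptyset$ or with a repeated index simply are not among the listed basis elements. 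Alternatively, one observes directly from the backward-induction proof that every homology class produced lands in a $k$-bounded degree. Either way, a single clarifying remark suffices, and then the corollary follows by assembling the graded pieces.
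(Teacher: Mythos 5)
Your overall plan — specialize Theorem~\ref{Koszulcycles} to $j=1$, drop the vacuous condition $1\leq\min(F)$, and assemble the graded pieces $H_i(1)_\ab$ over all $k$-bounded $\ab$ — is exactly the paper's route. But the one step you yourself flag as needing care, namely $H_i(\xb;S/I)_\ab=0$ for $\ab$ not $k$-bounded, is where your proposal has a genuine gap: none of the justifications you offer for it is correct. The claim that ``$(S/I)_\ab$ and the relevant Koszul pieces vanish'' is false (e.g.\ a monomial with an exponent exceeding $k$ need not lie in $I$, so $(S/I)_\ab\neq 0$ in non-$k$-bounded degrees); the suggestion to ``note the basis described in the theorem is empty in that degree'' is vacuous, since Theorem~\ref{Koszulcycles} is stated only for $k$-bounded $\ab$ and says nothing in other degrees; and the argument via cycles $u'e_F\wedge e_{m(u)}$ is circular, because it presupposes that homology in a non-$k$-bounded degree is spanned by cycles of this monomial form, which is precisely what is not available there. (It is also based on a false premise: $F$ is nowhere required to be disjoint from $\supp(u)$ — the condition ``$\Deg(\xb^Fu)$ is $k$-bounded'' exists exactly to exclude those $F$ meeting slots where $u$ already has exponent $k$, which is what produces the $L(u)$ term in Corollary~\ref{somayeh}.) Finally, ``observe it from the backward induction'' does not work for free either: the induction in the proof of Theorem~\ref{Koszulcycles} uses the $k$-boundedness of $\ab$ at an essential point (to conclude $x_ju'\in I$ from the $k$-Borel property), so it cannot simply be rerun in non-$k$-bounded degrees.

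The missing ingredient is supplied in the paper by a standard fact about multigraded resolutions of monomial ideals: the multigraded shifts $\ab_{ij}$ in the minimal free resolution of $S/I$ divide least common multiples of the generators (the paper cites \cite[Theorem 3.1]{BH}); since all $u\in G(I)$ are $k$-bounded, so are all lcm's of subsets of $G(I)$, hence all shifts, and therefore $\Tor_i^S(K,S/I)_\ab\iso H_i(\xb;S/I)_\ab=0$ whenever $\ab$ fails to be $k$-bounded. With that one citation (or an equivalent argument via the Taylor complex or the lcm-lattice) inserted in place of your three attempted shortcuts, your proof coincides with the paper's.
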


\begin{proof}
Since $H_i(\xb;S/I)\iso \Tor_i(K;S/I)$, it follows that $H_i(\xb;S/I)$ is a multigraded vector space whose graded components are $H_i(1)_\ab$. Since the exponent vectors of all monomials of $G(I)$ are $k$-bounded, the shifts in the multigraded free resolution of $S/I$ are also $k$-bounded.  This implies that $H_i(1)_\ab=0$ if $\ab$ is not $k$-bounded, see \cite[Theorem 3.1]{BH}. Hence
\[
H_i(\xb;S/I)=\Dirsum_\ab H_i(1)_\ab,
\]
where the direct sum is taken over all $k$-bounded integral vectors $\ab$. Therefore, the result follows from Theorem~\ref{Koszulcycles}.
\end{proof}

For $\ab\in \ZZ^n$ with $\ab=(a_1,\ldots,a_n)$, we set $|\ab|= \sum_{i=1}^na_i$. For a monomial $u=x_1^{a_1}\cdots x_n^{a_n}$, we set $\deg_{x_i}(u)=a_i$ for $1\leq i\leq n$. With this notation introduced we have

\begin{Corollary}
\label{somayeh}
Let $I\subset S=K[x_1,\ldots,x_n]$ be a $k$-Borel ideal. Then
\[
\beta_{i,i+j}(I)= \sum_{u\in G(I),\; \deg(u)=j} {m(u)-L(u)-1 \choose i},
\]
where $m(u)=\max(u)$ and $L(u)=|\{1 \leq \ell<m(u) \:\; \deg_{x_{\ell}}(u)=k\}|$.
\end{Corollary}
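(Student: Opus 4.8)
The plan is to read off $\beta_{i,i+j}(I)$ directly from the explicit Koszul homology basis furnished by Corollary~\ref{j=1}; the only work is the standard translation between the Betti numbers of $I$ and those of $S/I$ and a short combinatorial count of admissible index sets.

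First I would record that, from the short exact sequence $0\to I\to S\to S/I\to 0$ together with the $\ZZ^n$-graded isomorphism $H_p(\xb;S/I)\iso\Tor_p^S(K,S/I)$, one has $\beta_{i,i+j}(I)=\beta_{i+1,i+j}(S/I)=\dim_K H_{i+1}(\xb;S/I)_{i+j}$ for all $i\geq 0$ and all $j>0$. Applying Corollary~\ref{j=1} with homological index $i+1$, the space $H_{i+1}(\xb;S/I)$ has a $K$-basis consisting of the classes $[u'e_F\wedge e_{m(u)}]$ with $u\in G(I)$, $F\subseteq[n]$, $|F|=i$, $\max(F)<m(u)$ and $\Deg(\xb^Fu)$ $k$-bounded; such a class lies in internal degree $|F|+\deg(u)=i+\deg(u)$, so those contributing to $\beta_{i,i+j}(I)$ are exactly the ones with $\deg(u)=j$. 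I would also check that distinct pairs $(u,F)$ yield distinct basis vectors: since $\max(F)<m(u)$, the set $F\cup\{m(u)\}$ underlying the exterior monomial has maximum $m(u)$, hence determines $m(u)$ and thus $F$, and then the coefficient monomial $u'=u/x_{m(u)}$ recovers $u$. Consequently $\beta_{i,i+j}(I)$ is precisely the number of such pairs $(u,F)$ with $\deg(u)=j$.

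It then remains to count, for a fixed $u\in G(I)$ of degree $j$, the sets $F\subseteq\{1,\dots,m(u)-1\}$ with $|F|=i$ for which $\xb^Fu$ is $k$-bounded. Since $\deg_{x_\ell}(\xb^Fu)=\deg_{x_\ell}(u)+1$ for $\ell\in F$ and $u$ is already $k$-bounded, the monomial $\xb^Fu$ fails to be $k$-bounded exactly when $F$ contains some $\ell<m(u)$ with $\deg_{x_\ell}(u)=k$; there are precisely $L(u)$ such indices $\ell$. Hence the admissible $F$ are the $i$-element subsets of a set of cardinality $(m(u)-1)-L(u)$, giving $\binom{m(u)-L(u)-1}{i}$ of them. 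Summing over all $u\in G(I)$ with $\deg(u)=j$ produces the stated formula; the case $i=0$ degenerates to $\beta_{0,j}(I)=\#\{u\in G(I):\deg(u)=j\}$, consistent with $\binom{m(u)-L(u)-1}{0}=1$. The argument is routine, and if there is any point to be careful about it is the distinctness bookkeeping in the previous paragraph, which is what makes "dimension count" equal "count of pairs $(u,F)$".
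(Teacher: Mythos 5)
Your proposal is correct and follows essentially the same route as the paper: it invokes the Koszul homology basis of Corollary~\ref{j=1}, uses $\beta_{i,i+j}(I)=\beta_{i+1,i+j}(S/I)$, and counts the admissible sets $F\subseteq[m(u)-1]$ with $\xb^Fu$ $k$-bounded, which is exactly the count $\gamma_{u,i+1}=\binom{m(u)-L(u)-1}{i}$ in the paper's argument. The extra distinctness check and the $i=0$ sanity check are harmless additions; nothing is missing.
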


\begin{proof}
Let $\gamma_{u,i}=|\{F\subseteq [n]\:\; |F|=i-1, \; \max(F)<m(u), \; \Deg(\xb^Fu)\;\text{is $k$-bounded}\}|$.
By Corollary \ref{j=1}, we have
\[
\beta_{i,i+j-1}(S/I)= \sum_{u\in G(I),\; \deg(u)=j}\gamma_{u,i}.
\]
Thus
$\beta_{i,i+j}(I)=\beta_{i+1,i+j}(S/I)=\sum_{u\in G(I),\; \deg(u)=j}\gamma_{u,i+1}$. One can see that
$\gamma_{u,i+1}={m(u)-L(u)-1 \choose i}$.
\end{proof}

A monomial ideal $I\subset S$ with $G(I)=\{u_1,u_2,\ldots, u_m\}$ is said to have {\em linear quotients} with respect to the order $u_1,u_2,\ldots,u_m$ if for any $2\leq j\leq m$, the colon ideal  $(u_1,\ldots,u_{j-1}):(u_j)$ is generated by some variables. For any $j$, we define  $\set(u_j)=\{i: x_i\in (u_1,\ldots,u_{j-1}):(u_j)\}$. 

\medskip
We apply Corollary \ref{somayeh} to show the linear quotients property for equigenerated $k$-Borel ideals. For this we need 

\begin{Lemma}
\label{quotients}
Let $I$ be a monomial ideal generated in a single degree with $G(I)=\{u_1,\ldots,u_m\}$, and suppose that $I_j=(u_1,\ldots,u_j)$ has linear resolution for $j=1,\ldots,m$. Then $I$ has linear quotients for this order of the generators.
\end{Lemma}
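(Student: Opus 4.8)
The statement is a general fact about monomial ideals that are equigenerated in degree $d$: if every ``initial chunk'' $I_j=(u_1,\dots,u_j)$ has a linear resolution, then $I$ has linear quotients with respect to the order $u_1,\dots,u_m$. I would prove it by induction on $m$, isolating the single colon ideal $\mathfrak{c}_j=(u_1,\dots,u_{j-1}):(u_j)$ for a fixed $j$ and showing it is generated by variables. The key observation is the short exact sequence
\[
0\to S/(I_{j-1}:u_j)(-d)\xrightarrow{\cdot u_j} S/I_{j-1}\to S/I_j\to 0,
\]
which relates the resolutions of $S/I_{j-1}$, $S/I_j$, and $S/(I_{j-1}:u_j)=S/\mathfrak{c}_j$.

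The main step is to extract from this sequence the information that $\mathfrak{c}_j$ is generated in degree $1$. Since $I_{j-1}$ and $I_j$ both have linear resolutions (by hypothesis) and are generated in degree $d$, their regularities are $\reg S/I_{j-1}=\reg S/I_j=d-1$. Plugging into the long exact sequence of Tor coming from the short exact sequence above, and using that the middle term $S/I_{j-1}$ has no Tor in the ``wrong'' degrees, I would read off that $\Tor_i^S(K, S/\mathfrak{c}_j)_{i+\ell}=0$ for $\ell>d$ after the degree shift is accounted for — more precisely, that $S/\mathfrak{c}_j$ has regularity at most $d-2$, hence $\mathfrak{c}_j$ (which is a monomial ideal, being a colon of monomial ideals) has regularity at most $d-1$. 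But a monomial ideal $\mathfrak c_j$ arising as $(u_1,\dots,u_{j-1}):(u_j)$ with all $u_i$ of the same degree $d$ has all its generators in degrees $\le d$; combined with the regularity bound one wants to push this down to conclude the generators sit in degree $1$. The cleanest route is: $\mathfrak c_j$ is generated in degrees $\le d$, and one shows by a direct degree count in the mapping cone / long exact sequence that $\beta_{0,\ell}(\mathfrak c_j)=0$ for $2\le \ell\le d$; together with the fact that $\mathfrak{c}_j\neq S$ and $\mathfrak c_j\neq 0$ this forces $\mathfrak c_j=(x_i : i\in\set(u_j))$.

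In more detail, from $0\to (S/\mathfrak c_j)(-d)\to S/I_{j-1}\to S/I_j\to 0$ the long exact sequence in $\Tor^S(K,-)$ gives, for each homological degree $i$,
\[
\cdots\to \Tor_{i+1}(K,S/I_j)\to \Tor_i(K,S/\mathfrak c_j)(-d)\to \Tor_i(K,S/I_{j-1})\to\cdots.
\]
Looking at internal degree $i+d-r$ for $2\le r\le d-1$: both $\Tor_{i+1}(K,S/I_j)$ and $\Tor_i(K,S/I_{j-1})$ vanish there because $I_{j-1}$, $I_j$ have linear resolutions (so their only nonzero graded Betti numbers of $S/I_\bullet$ sit in internal degree equal to homological degree plus $d-1$), hence $\Tor_i(K,S/\mathfrak c_j)_{i+d-r-d}=\Tor_i(K,S/\mathfrak c_j)_{i-r}=0$; taking $i$ appropriately and in particular $i=1$ shows $\beta_{1,1+\ell}(S/\mathfrak c_j)=\beta_{0,\ell}(\mathfrak c_j)=0$ for all $\ell$ with $2\le \ell\le d-1$, and a separate argument (using $\beta_{0,d}$ directly, or noting $\mathfrak c_j$ cannot need a degree-$d$ generator once all lower-degree syzygy data is controlled, or simply observing $\mathfrak c_j\supseteq(u_1:u_j,\dots)$ forces existence of a degree-$1$ generator since $u_1\preceq$-type reasoning is not needed here) handles $\ell=d$. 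Hence $\mathfrak c_j$ is generated in degree $1$, i.e.\ by a subset of the variables, which is exactly the linear quotients condition.

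\textbf{Expected main obstacle.} The delicate point is the ``endpoint'' degree $\ell=d$: the vanishing argument from the long exact sequence controls internal degrees strictly between $1$ and $d$, but a priori $\mathfrak c_j$ could have a minimal generator in degree exactly $d$ (namely $u_i$ itself if $u_i$ and $u_j$ are coprime-ish). Ruling this out is where one must use that $\mathfrak c_j$ is a \emph{monomial} ideal and argue combinatorially — e.g.\ if $\mathfrak c_j$ had a minimal monomial generator $w$ of degree $\ge 2$, pick a variable $x_s\mid w$; then $u_i\mid w u_j$ for some $i<j$ forces, after a careful bookkeeping of exponents, a smaller generator of $\mathfrak c_j$ dividing $w$, contradicting minimality — combined with the regularity bound from the resolution hypothesis. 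I would structure the write-up so that the homological input (linear resolutions $\Rightarrow$ regularity $d-1$ $\Rightarrow$ $\mathfrak c_j$ has no minimal generators in degrees $2,\dots,d-1$) and the combinatorial input (no minimal generator in degree $d$) are cleanly separated.
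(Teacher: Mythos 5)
Your overall strategy is sound and is essentially the same circle of ideas as the paper's proof: the paper uses the ideal version of your sequence, $0\to I_j\to I_{j+1}\to (S/(I_j:u_{j+1}))(-d)\to 0$, and deduces from the linearity of $I_{j+1}$ (plus the injectivity of $\Tor_0(K,I_j)_d\to\Tor_0(K,I_{j+1})_d$, which holds because $u_1,\dots,u_{j+1}$ are part of the minimal generating set of $I$) that $\Tor_1(K,S/(I_j:u_{j+1}))$ is concentrated in degree $1$. However, as written your proposal contains a concrete bookkeeping error, and the ``separate argument'' you defer to is not a proof. In the long exact sequence coming from $0\to (S/(I_{j-1}:u_j))(-d)\to S/I_{j-1}\to S/I_j\to 0$, the graded piece to look at in homological degree $1$ and internal degree $m+d$ is $\Tor_2(K,S/I_j)_{m+d}\to \Tor_1(K,S/(I_{j-1}:u_j))_{m}\to \Tor_1(K,S/I_{j-1})_{m+d}$; by linearity the left term vanishes unless $m=1$ and the right term vanishes unless $m=0$, so $\beta_{0,m}((I_{j-1}:u_j))=0$ for \emph{all} $m\ge 2$, including $m=d$. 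Your computation instead lands in internal degrees $i-r$, which for $i=1$ and $r\ge 2$ are negative and carry no information; this is why you (wrongly) conclude that only $2\le \ell\le d-1$ is controlled and that $\ell=d$ is a genuine obstacle. The same slip shows up in your regularity claims: the sequence gives $\reg\big((S/(I_{j-1}:u_j))(-d)\big)\le\max\{\reg S/I_{j-1},\,\reg S/I_j+1\}=d$, hence $\reg S/(I_{j-1}:u_j)\le 0$, not $d-2$, and this alone already forces the colon ideal to be generated in degree $1$ (together with the observation that it is a proper ideal, since $u_j\notin I_{j-1}$ because $u_1,\dots,u_m$ minimally generate $I$).

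The deferred endpoint argument is also where your proposal would actually fail if taken literally: the purely combinatorial claim that a minimal monomial generator $w$ of $(I_{j-1}:u_j)$ of degree $\ge 2$ always yields a smaller generator dividing $w$ is false for general equigenerated monomial ideals (linear quotients is not automatic; e.g.\ $I=(x_1x_2,x_3x_4)$ has $(x_1x_2):(x_3x_4)=(x_1x_2)$), so any such argument must use the linear-resolution hypothesis — and once the homological input is used correctly, as above, no combinatorial endpoint case remains. So: keep your sequence, redo the degree count, drop the Case $\ell=d$ discussion, and add the one-line remark that $(I_{j-1}:u_j)\ne S$ by minimality of the generators; with those repairs your proof is complete and parallel to the paper's (which differs only in using the ideal short exact sequence and trading your use of linearity of $I_{j-1}$ for the minimal-generator injectivity step).
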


\begin{proof}
Suppose that $I$ is generated in degree $d$. Then $I_{j+1}/I_j\iso (S/(I_j:u_{j+1}))(-d)$. Thus we get the short exact sequence
\[
0\to I_j\to I_{j+1}\to  (S/(I_j:u_{j+1}))(-d)\to 0,
\]
which induces the long exact sequence
\[
\Tor_1(K,I_{j+1})_{\ell}\to \Tor_1(K, S/(I_j:u_{j+1}))_{\ell-d}\to \Tor_{0}(K,I_j)_{\ell}\to  \Tor_{0}(K,I_{j+1})_{\ell}.
\]
For $\ell\neq d$, $\Tor_{0}(K,I_j)_{\ell}=0$, and for $\ell=d$,  $\Tor_{0}(K,I_j)_{\ell}\to  \Tor_{0}(K,I_{j+1})_{\ell}$ can be identified with the map $I_j/\mm I_j\to I_{j+1}/\mm I_{j+1}$. Since $I_j$ is generated by part of a minimal system of generators of $I_{j+1}$, this map is injective. Hence, for each  $\ell$, we get  the exact sequence $\Tor_1(K,I_{j+1})_{\ell}\to \Tor_1(K, S/(I_j:u_{j+1}))_{\ell-d}\to 0$. Since $I_{j+1}$ has $d$-linear resolution, it follows that $\Tor_1(K,I_{j+1})_{\ell}=0$ for $\ell\neq d+1$. This implies that $\Tor_1(K, S/(I_j:u_{j+1}))_{\ell}=0$ for $\ell\neq 1$, and shows that $(I_j:u_{j+1})$ is generated in degree 1, as desired.
\end{proof}

\begin{Corollary}
\label{anyorder}
Let $I$ be a $k$-Borel ideal generated in a single degree.
Choose any order $u_1,\ldots,u_m$ of the elements of $G(I)$ which extends the partial order $\prec_k$, i.e.\ if $u_i\prec_k u_j$, then $i<j$. Then $I$ has linear quotients with respect to this order of the generators.
\end{Corollary}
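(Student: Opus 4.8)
The plan is to reduce the statement to Lemma~\ref{quotients}: if every initial segment $I_j=(u_1,\ldots,u_j)$ has a $d$-linear resolution, where $d$ is the common degree of the generators of $I$, then $I$ has linear quotients for the order $u_1,\ldots,u_m$. So the whole task becomes: show each $I_j$ has linear resolution.

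First I would show that each $I_j$ is again a $k$-Borel ideal. Since $\{u_1,\ldots,u_m\}=G(I)$ is a minimal set of generators of $I$, the set $\{u_1,\ldots,u_j\}$ is the minimal set of generators of $I_j$, so by the characterization of $k$-Borel ideals in terms of their minimal generators it is enough to verify the exchange condition on each $u_\ell$ with $\ell\le j$. Fix such an $\ell$, let $a\in\supp(u_\ell)$ and $b<a$ be such that $v:=x_b(u_\ell/x_a)$ is $k$-bounded. Because $I$ is $k$-Borel, $v\in I$; since $\deg v=d$ and $I$ is generated in the single degree $d$, in fact $v\in G(I)$, say $v=u_t$. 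Moreover $v\in B_k(u_\ell)$ by the $k$-Borel property of $B_k(u_\ell)$, and $v\neq u_\ell$ (they differ in the exponent of $x_b$ or of $x_a$), so $u_t=v\prec_k u_\ell$; as the chosen order refines $\prec_k$, this forces $t<\ell\le j$, hence $v=u_t\in I_j$. Thus $I_j$ is $k$-Borel.

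Next, since $I_j$ is a $k$-Borel ideal whose generators all lie in degree $d$, Corollary~\ref{somayeh} gives $\beta_{i,i+e}(I_j)=0$ for all $e\neq d$; equivalently $\beta_{i,p}(I_j)\neq 0$ only when $p=i+d$, i.e.\ $I_j$ has a $d$-linear resolution. This holds for every $j=1,\ldots,m$, so Lemma~\ref{quotients} applies and yields that $I$ has linear quotients with respect to the order $u_1,\ldots,u_m$.

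The only step that needs genuine care is the first paragraph, namely guaranteeing that a single Borel move applied to some $u_\ell$ ($\ell\le j$) lands back inside $\{u_1,\ldots,u_j\}$. This is exactly where both hypotheses are used: the single-degree assumption forces the moved monomial to be itself a minimal generator of $I$ (rather than a proper multiple of one, which could have an arbitrarily large index or none at all), and the assumption that the order extends $\prec_k$ forces its index to drop strictly below $\ell$. Without the single-degree hypothesis an initial segment $I_j$ need not be $k$-Borel, and this route to linear quotients breaks down.
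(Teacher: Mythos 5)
Your proposal is correct and follows essentially the same route as the paper: show that each initial segment $I_j$ is again $k$-Borel (which the paper asserts and you verify in detail, correctly using the single-degree hypothesis and the compatibility of the order with $\prec_k$), deduce linear resolution from Corollary~\ref{somayeh}, and conclude via Lemma~\ref{quotients}.
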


\begin{proof}
For the given order of the generators, it follows that for all $j$, the ideal $I_j=(u_1,\ldots,u_j)$ is $k$-Borel. Corollary~\ref{somayeh} implies that $I_j$ has linear resolution for all $j$.  Therefore, the result follows from   Lemma \ref{quotients}.
\end{proof}

In the following, it is shown that any $k$-Borel ideal (not necessarily generated in a single degree) has linear quotients with respect to the lexicographic order on its minimal generators. This order obviously is different from the order given in \Cref{anyorder}.

\begin{Proposition}\label{lq}
Let $I$ be a $k$-Borel ideal. Then $I$ has linear quotients. In particular, it is componentwise linear.
\end{Proposition}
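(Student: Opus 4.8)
The plan is to order $G(I)$ by the lexicographic order, say $u_1 >_{\lex} u_2 >_{\lex} \cdots >_{\lex} u_m$ (so that $u_1$ is the lex-largest), and to show that for each $j \geq 2$ the colon ideal $(u_1, \ldots, u_{j-1}) : u_j$ is generated by variables. Concretely, I would prove the following characterization: $(u_1,\ldots,u_{j-1}):u_j = (x_\ell : \ell \in \set(u_j))$, where $\set(u_j)$ consists of those indices $\ell$ for which there is some $s \in \supp(u_j)$ with $s > \ell$, the variable $x_s$ in $u_j$ can legally be lowered to $x_\ell$ (i.e.\ $x_\ell(u_j/x_s)$ is still $k$-bounded), and the resulting monomial lies in $I$. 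The $k$-Borel hypothesis guarantees that $x_\ell(u_j/x_s) \in I$ automatically whenever it is $k$-bounded and $\ell < s \in \supp(u_j)$, so $\set(u_j)$ is really just $\{\ell : \exists\, s\in\supp(u_j),\ s>\ell,\ \deg_{x_\ell}(u_j) < k\}$, matched against divisibility by some $u_i$ with $i<j$.

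The two inclusions would be handled as follows. For $(x_\ell : \ell \in \set(u_j)) \subseteq (u_1,\ldots,u_{j-1}):u_j$: given $\ell \in \set(u_j)$ with witness $s$, set $w = x_\ell(u_j/x_s)$. Since $\ell < s$, $w >_{\lex} u_j$, and $w \in I$ by the $k$-Borel property; hence some generator $u_i$ divides $w$, and $u_i >_{\lex} w >_{\lex} u_j$ forces $i < j$. Then $u_i \mid w \mid x_\ell u_j$, so $x_\ell u_j \in (u_1,\ldots,u_{j-1})$, i.e.\ $x_\ell \in (u_1,\ldots,u_{j-1}):u_j$. For the reverse inclusion, since the colon ideal is monomial it suffices to take a monomial $v$ in it, so $v u_j$ is divisible by some $u_i$ with $i < j$. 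If $u_i \mid u_j$ then $u_i = u_j$ (both are minimal generators), contradicting $i \neq j$; so $u_i \nmid u_j$, and comparing exponents there is an index $\ell$ with $\deg_{x_\ell}(u_i) > \deg_{x_\ell}(u_j)$, whence $x_\ell \mid v$. I then need $x_\ell \in \set(u_j)$, i.e.\ I must locate a witness $s > \ell$, $s \in \supp(u_j)$, with $\deg_{x_\ell}(u_j) < k$. The bound $\deg_{x_\ell}(u_j) < k$ holds because $\deg_{x_\ell}(u_i) \leq k$ (generators are $k$-bounded) and $\deg_{x_\ell}(u_i) > \deg_{x_\ell}(u_j)$. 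To find $s$: since $u_i >_{\lex} u_j$ and $u_i$ already has more $x_\ell$'s than $u_j$ does at the first place they differ (or earlier they agree), a standard lex-comparison argument shows that $u_j$ must "make up" degree using some variable $x_s$ with $s > \ell$ — more carefully, since $\deg u_i \leq \deg u_j$ in each initial segment fails appropriately, one compares total degrees in $\{x_1,\ldots,x_\ell\}$ versus $\{x_1,\ldots,x_{\ell-1}\}$ and uses $u_i >_{\lex} u_j$ to conclude $\supp(u_j) \cap \{\ell+1,\ldots,n\} \neq \emptyset$. Once such $s$ is found, $x_\ell(u_j/x_s)$ is $k$-bounded, so $\ell \in \set(u_j)$ and $x_\ell$ lies in the claimed variable-generated ideal.

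The main obstacle I anticipate is precisely this last step — extracting a witness index $s > \ell$ in $\supp(u_j)$ from the lex-inequality $u_i >_{\lex} u_j$ together with $\deg_{x_\ell}(u_i) > \deg_{x_\ell}(u_j)$. The subtlety is that $u_i$ and $u_j$ need not have the same degree (the ideal is not equigenerated), so I cannot directly invoke the clean exponent-shifting arguments available for strongly stable ideals generated in one degree; I will need to argue with the lex order on possibly-unequal-degree monomials, where $u_i >_{\lex} u_j$ means at the smallest index $t$ where they differ, $\deg_{x_t}(u_i) > \deg_{x_t}(u_j)$ (with the convention that one runs out of variables). A convenient reduction is to first establish the colon-ideal description for the case of two monomials via Lemma~\ref{boundedpart}, passing to the ambient Borel ideal $B(u_i,u_j)$ where degrees can be matched up by multiplying by a suitable power of $x_n$, and then descend to the $k$-bounded part; alternatively, one argues directly. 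Finally, once linear quotients is established, "componentwise linear" follows from the standard fact (Herzog–Hibi) that ideals with linear quotients are componentwise linear, so that part needs only a citation.
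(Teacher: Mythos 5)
Your overall strategy --- order $G(I)$ lexicographically and show that the colon ideals are generated by the variables $x_\ell$ admitting a $k$-bounded Borel move $x_\ell(u_j/x_s)$ --- is the same as the paper's, but your forward inclusion has a genuine gap, and it is exactly the step on which the paper spends most of its proof. You argue that since some minimal generator $u_i$ divides $w=x_\ell(u_j/x_s)$, one has $u_i >_{\lex} w >_{\lex} u_j$, hence $i<j$. The first inequality is backwards: in the lexicographic order induced by $x_1>\cdots>x_n$, a proper divisor is lex-\emph{smaller} than any monomial it divides (at the first exponent where they differ, the multiple is larger), so divisibility only gives $u_i \leq_{\lex} w$ and yields no information on the position of $u_i$ relative to $u_j$. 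A priori the generator dividing $w$ could be lex-smaller than $u_j$, in which case it does not witness $x_\ell\in(u_1,\ldots,u_{j-1}):u_j$; ruling this out is not a formal consequence of $w\in I$ but requires the Borel structure again. The paper closes precisely this gap: writing $u_j=x_1^{b_1}\cdots x_n^{b_n}$ and $v=x_t(u_j/x_s)$, it shows that \emph{any} minimal generator $u_\ell=x_1^{c_1}\cdots x_n^{c_n}$ dividing $v$ satisfies $c_t=b_t+1$ (else $u_\ell$ divides $u_j/x_s$, contradicting minimality of $u_j$) and $c_r=b_r$ for all $r<t$; the latter uses a second application of the $k$-Borel property to $(u_\ell/x_t)x_r$, which is $k$-bounded, hence in $I$, hence equal to $u_j$, leading via $x_ru_\ell=x_sv$ and $u_\ell\mid v$ to the contradiction $r=s$. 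From this one gets $u_\ell>_{\lex}u_j$ and $u_\ell:u_j=(x_t)$. Your proposal contains no substitute for this argument.

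By contrast, the step you flag as the main obstacle --- producing a witness $s>\ell$ in $\supp(u_j)$ for the reverse inclusion --- is the easy part, provided you choose $\ell$ correctly: take $t$ to be the \emph{smallest} index with $\deg_{x_t}(u_i)>\deg_{x_t}(u_j)$; an arbitrary such index need not admit a witness (e.g.\ $u_i=x_1x_4$, $u_j=x_2x_3$, $\ell=4$). With this choice, $u_i>_{\lex}u_j$ forces the exponents to agree below $t$, so if $u_j$ had no support beyond $t$ it would divide $u_i$, contradicting minimality; moreover $b_t\leq a_t-1\leq k-1$ gives the required $k$-boundedness, and $x_t$ divides $u_i:u_j$. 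No detour through $B(u_i,u_j)$ or degree-padding by powers of $x_n$ is needed. Your final remark is fine: once linear quotients is proved, componentwise linearity is a standard citation, exactly as in the paper.
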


\begin{proof}
Consider the lexicographic order  $u_1>\cdots>u_m$ of all elements in $G(I)$ which is induced by the order $x_1>\cdots>x_n$.
Let $1\leq i<j\leq m$, $u_i=x_1 ^{a_1}\cdots x_n ^{a_n}$, $u_j=x_1 ^{b_1}\cdots x_n ^{b_n}$ and $t$ be the smallest integer such that $x_t\mid (u_i:u_j)$. Since $u_i> u_j$, we have $a_r=b_r$ for any $r<t$ . Also $b_t\leq a_t-1\leq k-1$. There exists an integer $s>t$ such that $x_s\mid u_j$, otherwise $u_j\mid u_i$, which contradicts to $u_i,u_j\in G(I)$. Set $v=(u_j/x_s)x_t$. Then $v$ is a $k$-bounded  monomial since $b_t\leq k-1$. Therefore $v\in I$. Let $u_{\ell}\in G(I)$ be such that $u_{\ell}\mid v$. Let $u_{\ell}=x_1 ^{c_1}\cdots x_n ^{c_n}$. Then clearly $c_r\leq b_r$ for any $r\neq t$ and $c_t\leq b_t+1$.
If $c_t\leq b_t$, then $u_{\ell}\mid (u_j/x_s)$, which contradicts to $u_j\in G(I)$. Thus $c_t=b_t+1$.

We show that $c_r=b_r$ for any $r<t$. By contradiction assume that $c_r<b_r$ for some $r<t$. Then $w=(u_{\ell}/x_t)x_r\in I$ since it is $k$-bounded. Also by comparing exponents,  we get $(u_{\ell}/x_t)x_r\mid u_j$. Hence $(u_{\ell}/x_t)x_r=u_j$, which implies that $x_tu_j=x_ru_{\ell}$. Since $x_tu_j=x_sv$, we have $x_ru_{\ell}=x_sv$. This together with $u_{\ell}\mid v$ and $\deg(u_{\ell})=\deg(v)$ imply that $u_{\ell}=v$. It forces that $r=s$, which contradicts to $r<t<s$. So $c_r=b_r$ for any $r<t$, $u_{\ell}> u_j$ and $u_{\ell}:u_j=x_t$. This shows that $u_1,\ldots,u_m$ is an order of linear quotients for $I$.
\end{proof}

Let $I$ be a monomial ideal with linear quotients and $u_1,\ldots,u_m$ be an order of linear quotients for $I$. We denote by $M(I)$ the set of all monomials in $I$. The \emph{decomposition function of $I$} is defined as the map $g: M(I)\rightarrow G(I)$ given 
by  $g(u) = u_j$, where $j$ is the smallest number such that $u\in (u_1,\ldots,u_{j})$. 
The decomposition function of $I$ is said to be {\em regular} if for each $u\in G(I)$ and every
$s\in \set(u)$ we have
\[
 \quad\set(g(x_s u))\subseteq \set(u).
\]
The decomposition function of an ideal with linear quotients is not always regular. For example, consider $I = (x_2x_4, x_1x_2, x_1x_3)$. Then with respect to
the given order of the generators, $I$ has linear quotients, while $\set(x_1x_3) =
{2}$, and $\set(g(x_2(x_1x_3))) = {4}$. It is quite obvious that stable and squarefree stable ideals have
regular decomposition functions with respect to the reverse degree lexicographic
order. Another class of squarefree monomial ideals with regular decomposition function
is the Stanley-Reisner ideal of a matroid (see \cite[Theorem 1.10]{HT}). In the following proposition we show this property for any $k$-Borel ideal.

\begin{Proposition}\label{regdec}
Let $I$ be a $k$-Borel ideal. Then $I$ has a regular decomposition function.
\end{Proposition}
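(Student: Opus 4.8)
The plan is to unwind the definitions and reduce regularity of the decomposition function to a concrete statement about the lexicographic order of linear quotients established in Proposition~\ref{lq}. Recall that for a $k$-Borel ideal $I$ we use the order $u_1>\cdots>u_m$ of $G(I)$ induced by $x_1>\cdots>x_n$, and that for $u\in G(I)$ we have $\set(u)=\{i\:x_i\in(u_1,\ldots,u_{j-1}):u_j\}$ if $u=u_j$. From the proof of Proposition~\ref{lq} one reads off an explicit description of $\set(u_j)$: an index $t$ lies in $\set(u_j)$ if and only if $\deg_{x_t}(u_j)\leq k-1$ and there exists $s>t$ with $x_s\mid u_j$ — equivalently, $t<\max(u_j)$ and $\deg_{x_t}(u_j)<k$. (Indeed, if these conditions hold then $v=(u_j/x_s)x_t$ is a $k$-bounded monomial in $I$ strictly lex-larger than $u_j$, dividing a generator $u_\ell>u_j$ with $u_\ell:u_j=x_t$; conversely if $x_t\in(u_1,\ldots,u_{j-1}):u_j$ then, since all $u_i$ have the same degree in the equigenerated pieces — but here $I$ need not be equigenerated, so one argues degree by degree using that $I$ is componentwise linear and $x_tu_j\in I$ forces a lex-larger generator of the same degree, whence $t<\max(u_j)$ and $\deg_{x_t}(u_j)<k$.) So the first step is to pin down this characterization of $\set(u)$ cleanly.

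The second step is to identify the decomposition function. For $u\in G(I)$ and $s\in\set(u)$, the monomial $x_su$ lies in $I$, and $g(x_su)$ is the lex-largest generator of $I$ dividing $x_su$ — equivalently, the unique $u_\ell\in G(I)$ with $u_\ell\mid x_su$ and $u_\ell$ maximal in the lex order. I would give an explicit formula: writing $u=x_1^{a_1}\cdots x_n^{a_n}$ with $s\in\set(u)$, so $a_s<k$ and $s<\max(u)$, set $t=\max\{r<\max(u)\: a_r<k\text{ or }r=s\}$... more simply, let $p$ be the largest index with $p<\max(u)$ such that increasing $\deg_{x_p}$ while decreasing $\deg_{x_{\max(u)}}$ keeps us in $I$; then $g(x_su)=x_p(x_su/x_{\max(u)})$ when this is $k$-bounded and still in $I$, with $p\geq s$. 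The key point — which follows from the analysis in the proof of Proposition~\ref{lq}, applied with $x_su$ in place of $u_j$ — is that $g(x_su)$ is obtained from $x_su$ by moving the "top" variable $x_{\max(u)}$ down to the smallest admissible slot, and that slot is $\geq s$.

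The third and main step — the technical heart — is to verify $\set(g(x_su))\subseteq\set(u)$ for every $u\in G(I)$ and $s\in\set(u)$. Write $w=g(x_su)$. Using the characterization from step one, $t\in\set(w)$ iff $t<\max(w)$ and $\deg_{x_t}(w)<k$; I must show this forces $t<\max(u)$ and $\deg_{x_t}(u)<k$. One first checks $\max(w)\leq\max(u)$: since $w\mid x_su$ and $s<\max(u)$, every variable dividing $w$ has index $\leq\max(u)$, and in fact $x_{\max(u)}$ does not divide $w$ (it got pushed down), so $\max(w)<\max(u)$ — actually $\max(w)=\max(x_su/x_{\max(u)})$ at worst, which is $\leq\max(u)$. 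Then for the exponent condition: the generator $w$ differs from $x_su/x_{\max(u)}$ only by increasing the exponent of some $x_p$ with $p\geq s$, $p<\max(u)$, and one must track that no exponent of $w$ below its support-max reaches $k$ unless the corresponding exponent of $u$ already did. This is where I expect the real case analysis: separating the index $t$ into $t<s$, $t=s$, $s<t<\max(w)$, and using $a_s<k$ together with the fact that the "down-shift" only increments a single exponent $\deg_{x_p}$ by one, to conclude. The componentwise-linear / $k$-Borel stability is exactly what guarantees the down-shifted monomial stays in $I$, so these pieces fit together. I expect the main obstacle to be bookkeeping in step three — precisely controlling which single exponent is incremented in passing from $x_su$ to $g(x_su)$ and showing it cannot create a new "saturated" coordinate below the new support-maximum that was not already saturated in $u$; everything else is a direct application of the description of $\set$ coming from the proof of Proposition~\ref{lq}.
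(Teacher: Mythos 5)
Your step-one characterization $\set(u)=\{t:\ t<\max(u),\ \deg_{x_t}(u)<k\}$ is correct (it is essentially what the paper extracts from the proof of Proposition~\ref{lq}), but the proof collapses at steps two and three. The explicit description of the decomposition function is wrong: $g(x_su)$ is the lex-largest \emph{minimal generator} dividing $x_su$, which in the equigenerated case is simply $x_su/x_{\max(u)}$ (of degree $\deg u$, whereas your formula $x_p(x_su/x_{\max(u)})$ has degree $\deg u+1$ and cannot be a generator), and in the non-equigenerated case it need not be obtained from $x_su$ by ``moving the top variable down'' at all: for the $1$-Borel ideal $I=(x_1,x_2x_3)$ with $u=x_2x_3$ and $s=1$ one has $g(x_1x_2x_3)=x_1$. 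Since Proposition~\ref{regdec} is stated for arbitrary $k$-Borel ideals, not only equigenerated ones, step three, which is built entirely on this structural picture of $g$, does not go through. Moreover, the bookkeeping you announce points in the wrong direction: writing $w=g(x_su)$, what must be shown is that every coordinate $t<\max(w)$ that is saturated in $u$ (i.e.\ $\deg_{x_t}(u)=k$) stays saturated in $w$ --- equivalently, $t\in\set(w)$, $t\neq s$ forces $\deg_{x_t}(u)<k$ --- not that $w$ acquires no new saturated coordinate.

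The missing ingredient is exactly what the paper's proof supplies via \cite[Lemma 1.7]{HT}: one may write $x_su=g(x_su)\,w'$ with $\supp(w')\cap\set(g(x_su))=\emptyset$, so for $t\in\set(g(x_su))$ with $t\neq s$ the exponents agree, $\deg_{x_t}(u)=\deg_{x_t}(g(x_su))<k$; since also $\max(g(x_su))\leq\max(u)$, your characterization then gives $t\in\set(u)$ at once (the paper argues the bound $t<\max(u_j)$ by a separate two-case analysis instead). Lex-maximality of $g(x_su)$, which is all your argument invokes, does not yield this on its own: from a hypothetical $t\in\set(w)\setminus\set(u)$ you can form $x_tw/x_{\max(w)}\in I$, a monomial dividing $x_su$ and lex-larger than $w$, but it need not be divisible by a \emph{generator} lex-larger than $w$, so no contradiction with the definition of $g$ follows without further input. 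To repair the proof, either invoke \cite[Lemma 1.7]{HT} as above, or prove directly that the exponents of $x_su$ and of $g(x_su)$ coincide at every position of $\set(g(x_su))$; as written, the ``technical heart'' of your step three is a genuine gap, not mere bookkeeping.
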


\begin{proof}
Consider the lexicographic order  $u_1>\cdots>u_m$ of all elements in $G(I)$ which is induced by the order $x_1>\cdots>x_n$. Let $g:M(I)\rightarrow G(I)$ be the decomposition function of $I$. We show that
$\set(g(x_s u_j))\subseteq \set(u_j)$ for any $u_j\in G(I)$ and any $s\in \set(u_j)$. Let $g(x_s u_j)=u_i$. Clearly $i\leq j$. If $i=j$, there is nothing to prove. So we may assume that $i<j$. There exists a monomial $w$ such that $x_su_j=u_iw$ and $\set(u_i)\cap \supp(w)=\emptyset$  by \cite[Lemma 1.7]{HT}. Let $u_j=x_1^{a_1}\cdots x_n^{a_n}$ and $u_i=x_1^{b_1}\cdots x_n^{b_n}$. Then $b_r\leq a_r$ for any $r\neq s$. Also note that  $x_s\nmid w$, otherwise $u_i\mid u_j$, a contradiction. So $b_s=a_{s}+1$. Since $u_i> u_j$,  $a_1=b_1,\ldots,a_{s-1}=b_{s-1}$.  In order to prove $\set(u_i)\subseteq \set(u_j)$, consider $t\in \set(u_i)$. Then by ~\Cref{lq}, we have $u_{\ell}:u_i=x_t$ for some $u_{\ell}> u_i$. Let $u_{\ell}=x_1^{c_1}\cdots x_n^{c_n}$, then $c_r\leq b_r$ for any $r\neq t$ and $c_t=b_t+1$.
Also since $u_{\ell}> u_i$, $b_1=c_1,\ldots,b_{t-1}=c_{t-1}$.
If $t=s$, then $t\in \set(u_j)$ and there is nothing to prove. So we may assume that $t\neq s$. We show that there exists an integer $q>t$ such that $a_q>0$. By contradiction assume that $a_r=0$ for any $r>t$.
We consider two cases and in each case we get a contradiction.

{\sc Case 1}.  Let $s<t$. Since $c_r\leq b_r\leq a_r$ for any $r>t$, we have  $c_r=b_r=0$ for any $r>t$. This implies that $u_{\ell}=x_t u_i$, which contradicts to $u_{\ell}\in G(I)$.

{\sc Case 2}.  Let $s>t$. Since $b_r\leq a_r$ for any $r>s$  and $s>t$, we have  $b_r=0$ for any $r>s$. This implies that $u_i=x_s u_j$, which contradicts to $u_i\in G(I)$.

Therefore there exists an integer $q>t$ such that $a_q>0$.
Since  $t\in \set(u_i)$ and $\set(u_i)\cap \supp(w)=\emptyset$, $x_t\notin \supp(w)$. Thus from the equality $x_su_j=u_iw$, we get $a_t=b_t$. Also the equality $c_t=b_t+1$ implies that $b_t<k$, since $u_{\ell}$ is a $k$-bounded monomial. Thus $a_t<k$.
Therefore $v=(u_j/x_q)x_t$ is a $k$-bounded monomial. So it belongs to $I$. Thus there exists some $p$ with $1\leq p\leq m$
such that $u_p\in G(I)$ and  $u_p\mid v$. With the same argument as in the proof of Proposition \ref{lq}, we get $u_p>u_j$ and  $u_p:u_j=x_t$. Hence $t\in \set(u_j)$.
\end{proof}

\medskip
In the case that $I$ has linear quotients, Herzog and Takayama \cite{HT} showed that the mapping cone
construction produces a minimal free resolution of $I$.
If furthermore $I$ has a regular decomposition function, then by \cite[Theorem 3.10]{DM} the minimal resolution of $I$ obtained as an iterated mapping cone is cellular and supported on a regular CW-complex. Hence as an immediate corollary of Proposition \ref{regdec} and \cite[Theorem 3.10]{DM} we have

\begin{Corollary}
Let $I$ be a $k$-Borel ideal. Then the minimal free resolution of $I$ obtained as an iterated mapping cone is cellular
and supported on a regular $CW$-complex.
\end{Corollary}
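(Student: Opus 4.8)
The claim is an immediate corollary: since we have just established in Proposition~\ref{regdec} that any $k$-Borel ideal $I$ has a regular decomposition function, and since $I$ has linear quotients (by Proposition~\ref{lq}), the two cited external results apply directly. So the "proof" is really just an application of the machinery assembled in this section.

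\begin{proof}
By Proposition~\ref{lq} the ideal $I$ has linear quotients, and by Proposition~\ref{regdec} its decomposition function is regular. Hence \cite[Theorem 3.10]{DM} applies verbatim and yields that the minimal free resolution of $I$ obtained as an iterated mapping cone is cellular and supported on a regular $CW$-complex.
\end{proof}

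The plan, such as it is: first invoke Proposition~\ref{lq} to get linear quotients with respect to the lexicographic order of $G(I)$; then invoke Proposition~\ref{regdec} to get that this same order has a regular decomposition function; then quote \cite[Theorem 3.10]{DM}, whose hypotheses are exactly "linear quotients plus regular decomposition function" and whose conclusion is exactly the statement to be proved. There is no genuine obstacle here — all the real work was done in Propositions~\ref{lq} and~\ref{regdec}, and the present corollary is a one-line deduction. If one wanted to expand it, the only thing worth spelling out would be checking that \cite[Theorem 3.10]{DM} is stated for an arbitrary monomial ideal with linear quotients and regular decomposition function (not, say, only for equigenerated ones), so that it legitimately covers $k$-Borel ideals that are not generated in a single degree; but the statement in \cite{DM} is indeed at that level of generality, so nothing extra is needed.
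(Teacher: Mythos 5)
Your proof is correct and follows exactly the route the paper takes: Proposition~\ref{lq} gives linear quotients, Proposition~\ref{regdec} gives the regular decomposition function, and \cite[Theorem 3.10]{DM} (together with the Herzog--Takayama mapping cone result) then yields the cellular, regular CW-supported minimal resolution. Nothing is missing; the paper likewise treats this as an immediate consequence of those two propositions.
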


\section{Multiplicity and analytic spread of squarefree Borel ideals}

Let $I$ be a  squarefree monomial ideal. Then $I$ is called a {\em squarefree Borel ideal}, when $I$ is $1$-Borel. 
If $I$ is a squarefree Borel ideal, then the minimal set of  Borel generators of $I$ is the set of maximal elements of $G(I)$ with respect to $\preceq$. 
In this section we describe some algebraic invariants of squarefree Borel ideals.

Let $u= x_{i_1} x_{i_2}\cdots x_{i_d} $ be a squarefree monomial with $i_1<i_2<\cdots<i_d$. A {\em block} of $u$ is a subset $\{i_{\l}, i_{\l+1},\ldots,i_{\l+k}\}$ such that $i_{\l+j}=i_{\l}+j$ for $j=1,\ldots, k$.   A block of $u$  is called {\em maximal} if it is not properly contained in any other block of $u$. Note that $\supp(u)$  can be uniquely decomposed into the disjoint union of  maximal blocks of $u$. In other words, $\supp(u)=B_{1}\sqcup B_{2}\sqcup\cdots \sqcup B_{k}$, where each $B_i$ is a maximal block of $u$ and $\max\{j:\, j\in B_i\}<\min\{j:\, j\in B_{i+1}\}-1$ for all $i$. We call $B_i$ the $i$th block of $u$ and $B_{1}\sqcup B_{2}\sqcup\cdots \sqcup B_{k}$ the {\em block decomposition of $u$}.

For a set $A\subseteq [n]$, set $P_A=(x_i:\ i\in A)$. We use Proposition \ref{height} to prove

\begin{Theorem}
\label{multi}
Let $I$ be a squarefree principal Borel ideal with the Borel generator $u$ and let $B_1$ be the first block of $u$.  Then 
$P$ is a minimal prime ideal of $I$ of height $h=\height(I)$ if and only if $P=P_A$ for some $A\subseteq [1,\max(B_1)]$ with $|A|=h$. In particular,
\[
e(S/I)= {\max(B_1) \choose |B_1|-1}
.\]
\end{Theorem}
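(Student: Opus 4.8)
The plan is to prove the characterization of height-$h$ minimal primes directly, and then read off the multiplicity formula by counting. Write $u = x_{i_1}x_{i_2}\cdots x_{i_d}$ with $i_1 < i_2 < \cdots < i_d$, so that $B_1 = \{i_1, i_1+1, \ldots, i_1 + (|B_1|-1)\}$ and $\max(B_1) = i_1 + |B_1| - 1$. By Proposition~\ref{height} we have $h = \height(I) = \min(u) = i_1$. Since $I$ is a squarefree monomial ideal, every minimal prime of $I$ is of the form $P_A$ for some $A \subseteq [n]$, and $P_A \supseteq I$ if and only if $A$ is a vertex cover of the (hyper)graph whose edges are $\{\supp(v) : v \in G(I)\}$; minimality of $P_A$ corresponds to $A$ being a minimal vertex cover. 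So the task reduces to: the minimal vertex covers of size $h = i_1$ are exactly the subsets $A \subseteq [1,\max(B_1)]$ with $|A| = i_1$.

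For the forward direction, suppose $P_A$ is a minimal prime with $|A| = h = i_1$. First I would show $A \subseteq [1, \max(B_1)]$. Suppose not, so there is some $a \in A$ with $a > \max(B_1) = i_1 + |B_1| - 1$. The idea is to exhibit, via the description of $B(u)$ from Lemma~\ref{lemorder} (using Corollary~\ref{partial} to pass between $\preceq$ and $\preceq_1$ in the squarefree setting), a generator $v \in G(I)$ with $v \preceq u$ whose support avoids $A$, contradicting $I \subseteq P_A$. Concretely: since $|A| = i_1$ and $a > \max(B_1)$, the set $[1,i_1] \setminus A$ is nonempty (as $|A \cap [1,i_1]| \le i_1 - 1$ because $a \notin [1,i_1]$), say $j_1 \in [1,i_1]\setminus A$; then greedily choose $j_1 < j_2 < \cdots < j_d$ with $j_\ell \le i_\ell$ and $x_{j_\ell} \notin P_A$ for all $\ell$ — this is possible by exactly the counting argument already carried out in the proof of Proposition~\ref{height} (at each step the available interval $\{1,\ldots,i_{\ell+1}\}\setminus\{j_1,\ldots,j_\ell\}$ has at least $i_1 > |A|$ elements), with the extra care that when $a > \max(B_1)$ one never needs to use $a$ since the first block is "full" and the greedy choices stay below. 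Then $v = x_{j_1}\cdots x_{j_d} \preceq u$, so $v \in G(I)$-or-is-divisible-by-an-element-of-$G(I)$ lying in $I$, and $\supp(v) \cap A = \emptyset$, contradiction. This shows $A \subseteq [1,\max(B_1)]$.

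For the converse, let $A \subseteq [1,\max(B_1)]$ with $|A| = i_1$; I must show $P_A \supseteq I$ and $P_A$ is minimal. Since $|A| = i_1 = |B_1| + (i_1 - |B_1|)$ and $A \subseteq [1, i_1 + |B_1| - 1]$, one checks that $A$ must meet every "window" $\{1,\ldots,i_1\}, \{2,\ldots,i_1+1\},\ldots$ that the first $|B_1|$ exponents of a generator $v \preceq u$ are forced into — more precisely, for any $v = x_{j_1}\cdots x_{j_d} \in G(I)$ we have $j_1 < j_2 < \cdots < j_{|B_1|} \le i_1 + |B_1| - 1 = \max(B_1)$, so $\{j_1,\ldots,j_{|B_1|}\}$ is a set of $|B_1|$ distinct integers in $[1,\max(B_1)]$; since $|A| = i_1 = \max(B_1) - |B_1| + 1$, the complement $[1,\max(B_1)]\setminus A$ has only $|B_1| - 1$ elements, hence cannot contain all of $j_1,\ldots,j_{|B_1|}$, so $\supp(v)\cap A \ne \emptyset$. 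Thus $I \subseteq P_A$. Since $\height(I) = i_1 = |A|$, $P_A$ has minimal possible height among primes containing $I$, hence is a minimal prime. Finally, the multiplicity: $e(S/I)$ equals the number of minimal primes of $I$ of height $h = \height(I) = \dim S - \dim S/I$ (since $S/I$ is then unmixed in top dimension, or directly: the minimal primes of maximal dimension each contribute $1$ to the multiplicity), which by the characterization just proved is the number of $h$-subsets of $[1,\max(B_1)]$, namely $\binom{\max(B_1)}{h} = \binom{\max(B_1)}{|B_1| - 1}$, using $h = i_1 = \max(B_1) - |B_1| + 1$ so that $\binom{\max(B_1)}{i_1} = \binom{\max(B_1)}{\max(B_1) - i_1} = \binom{\max(B_1)}{|B_1| - 1}$.

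The main obstacle is the forward direction — verifying carefully that the greedy construction of a generator $v \preceq u$ with $\supp(v) \cap A = \emptyset$ genuinely goes through when $A$ reaches outside $[1,\max(B_1)]$, i.e., that the "surplus" element $a \in A$ with $a > \max(B_1)$ forces a deficit inside $[1,\max(B_1)]$ that the construction can exploit. This is essentially a refinement of the bookkeeping already present in the proof of Proposition~\ref{height}, but one must track positions relative to the first block rather than just the global count; I expect the cleanest route is to first handle the initial segment $j_1 < \cdots < j_{|B_1|}$ (all forced to lie in $[1,\max(B_1)]$) and then extend by the Proposition~\ref{height} argument verbatim for the remaining coordinates $j_{|B_1|+1},\ldots,j_d$.
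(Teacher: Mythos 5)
Your converse direction (every $P_A$ with $A\subseteq[1,\max(B_1)]$ and $|A|=h$ contains $I$ and is therefore a minimal prime) and the final multiplicity count are correct and essentially the paper's own argument: the first $|B_1|$ indices of any generator $v\preceq u$ lie in $[1,\max(B_1)]$, and a cardinality count forces $\supp(v)\cap A\neq\emptyset$.

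The forward direction, however, has a genuine gap. Your justification of the greedy construction rests on the parenthetical claim that at each step the available set has ``at least $i_1>|A|$ elements''; this is false, since $|A|=h=i_1$ exactly. The counting in Proposition~\ref{height} only works for primes with fewer than $h$ generators, and it is precisely at this point that the hypothesis $a>\max(B_1)$ (equivalently $|A\cap[1,\max(B_1)]|\leq h-1$) must be converted into the existence of a generator avoiding $A$. You correctly flag this as ``the main obstacle'', but you never carry out the refined bookkeeping; the crucial step is deferred rather than proved. It is salvageable along your lines: let $j_\ell$ be the $\ell$-th smallest element of the complement of $A$. For $\ell\leq|B_1|$ one gets $j_\ell\leq h+\ell-1=i_\ell$ because $[1,h+\ell-1]\subseteq[1,\max(B_1)]$ meets $A$ in at most $h-1$ elements, and for $\ell>|B_1|$ one gets $j_\ell\leq h+\ell\leq i_\ell$, using $|A|=h$ and the fact that maximality of the first block forces $i_\ell\geq h+\ell$ (this needs at least two blocks; the one-block case is trivial since then $\max(u)=\max(B_1)$). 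Then $v=x_{j_1}\cdots x_{j_d}\preceq u$ lies in $G(I)$ by Corollary~\ref{partial} and misses $P_A$, the desired contradiction. The paper reaches the same end by a slightly different device: it first shows $A\subseteq[1,h+d-1]$ by comparing with the auxiliary principal Borel ideal $B(x_hx_{h+1}\cdots x_{h+d-1})\subseteq I$, and then writes down the explicit generator $w=(\prod_{i=1}^{h+d}x_i)/\prod_{i\in A}x_i$ and verifies $w\prec u$ coordinate by coordinate; either way, the inequalities your sketch omits are the heart of the proof.
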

\begin{proof}
By Proposition \ref{height}, $\height(I)=\min(u)=h$. Hence $B_1=\{h,h+1,\ldots,h+k-1\}$, where $k=|B_1|$. 
First we show that for any $A\subseteq [1,h+k-1]$ of cardinality $h$, the ideal $P_A$ is a minimal prime ideal of $I$. Let $A$ be such a set and let $v\in G(I)$, that is, $v\preceq u$ and $\deg(v)=\deg(u)=d$. Let $v=x_{i_1}\cdots x_{i_d}$, where $i_1<i_2<\cdots< i_d$. Then $\{i_1,\ldots,i_k\}\subseteq [1,h+k-1]$. It follows that $A\cap \{i_1,\ldots,i_k\}\neq \emptyset$, otherwise $A\cup \{i_1,\ldots,i_k\} \subseteq [1,h+k-1]$ is a set of cardinality $h+k$, which is a contradiction. Thus 
$v\in P_A$ for any $v\in G(I)$, and hence $P_A$ is a minimal prime ideal of $I$. Clearly $\height(P_A)=h$.

Now, consider an arbitrary minimal prime ideal $P$ of $I$ of height $h$. Since $P$ is generated by variables, $P=P_A$ for some $A$ with $A\subseteq [1,\max(u)]$ and $|A|=h$. We show that $A\subseteq [1,h+k-1]$. Let $\supp(u)=B_{1}\sqcup B_{2}\sqcup\cdots \sqcup B_{r}$ be the block decomposition of $u$. If $r=1$, then $\supp(u)=B_1=\{h,h+1,\ldots,h+k-1\}$ and   $\max(u)=h+k-1$. Hence $A\subseteq [1,h+k-1]$ as desired.
Now suppose that $r\geq 2$ and by contradiction assume that there exists $j\in A$ with $j\geq h+k$. This implies that $|A\cap [1,h+k-1]|\leq h-1$. Let $d=\deg(u)$. Consider the monomial $v=\prod_{i=h}^{h+d-1} x_i$ of degree $d$. Clearly $v \prec u$ and   $v\in G(I)$. Set $J=B(v)$. Then $J\subset I\subseteq P_A$ with $\height(J)=\min(v)=h=\height(P_A)$. Hence $P_A$ is a minimal prime ideal of $J$ as well. Since $\supp(v)=[h,h+d-1]$ consists of one block, we have $A\subseteq [1,h+d-1]$. Let $x^A=\prod_{i\in A} x_i$. We show that for the monomial $w=(\prod_{i=1}^{h+d} x_i)/x^A$, we have $w\prec u$. Once we show this, since $\deg(w)=d$, we get $w\in G(I)$, while $w\notin P_A$, a contradiction.
Let $\supp(w)=\{j_1,\ldots,j_d\}$ and $\supp(u)=\{\l_1,\ldots,\l_d\}$ with $j_1<\cdots<j_d$ and  $\l_1<\cdots<\l_d$. Then $j_t\leq h+t\leq \l_t$ for any $k+1\leq t\leq d$. It remains to show that $j_t\leq \l_t=h+(t-1)$ for $1\leq t \leq k$. Since $|A\cap [1,h]|\leq h-1$, and $j_1$ is the smallest integer in $[1,h+d]\setminus A$, we have  $j_1\in [1,h]$. So $j_1\leq h=\l_1$. Similarly, we have $|A\cap ([1,h+1]\setminus \{j_1\})|\leq h-1$. Thus $j_2\in [1,h+1]$ and hence $j_2\leq h+1=\l_2$. The same argument shows the inequality $j_t\leq \l_t$ for any $1\leq t\leq k$, as desired. 

The second statement follows from the fact that the multiplicity of $S/I$ is equal to the number of the minimal prime ideals of $I$ of height $h$. 
\end{proof}

The formula for the multiplicity given in Theorem~\ref{multi} can be generalized to squarefree Borel ideals with any number of Borel generators under some extra assumption on the first block of the Borel generators, as the next result shows. One can easily construct many examples which show that this extra assumption  can not be dropped.  

\begin{Theorem}
\label{multigeneral}
Let $I$ be a squarefree Borel ideal with Borel generators   $u_{1},\ldots, u_{m}$ and let $B_{1i}$ be the first block of $u_i$ for $1\leq i\leq m$. Suppose there exists an integer $j$ such that $B_{1j}\subseteq B_{1i}$ for all $1\leq i\leq m$. Then  
\[
e(S/I)= {\max(B_{1j})\choose |B_{1j}|-1}
.\]
\end{Theorem}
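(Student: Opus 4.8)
The plan is to reduce the case of several Borel generators to the principal case already settled in Theorem~\ref{multi}. Let $h=\height(I)$. By Proposition~\ref{height} we have $h=\max\{\min(u_1),\ldots,\min(u_m)\}=\max\{\min(B_{11}),\ldots,\min(B_{1m})\}$, and since $B_{1j}\subseteq B_{1i}$ for all $i$ we get $\min(B_{1j})\ge \min(B_{1i})$, so $\min(u_j)=h$ and $B_{1j}=\{h,h+1,\ldots,h+k-1\}$ with $k=|B_{1j}|$. As in Theorem~\ref{multi}, $e(S/I)$ equals the number of minimal primes of $I$ of height exactly $h$, so it suffices to show that these primes are precisely the $P_A$ with $A\subseteq[1,h+k-1]$ and $|A|=h$; the count is then $\binom{h+k-1}{h}=\binom{\max(B_{1j})}{|B_{1j}|-1}$.

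First I would prove that every such $P_A$ is a minimal prime of height $h$. Since $I=\sum_{i=1}^m B(u_i)$, it is enough to check $P_A\supseteq B(u_i)$ for each $i$, i.e.\ that every $v\in G(B(u_i))$ lies in $P_A$. Write $v=x_{i_1}\cdots x_{i_d}$ with $i_1<\cdots<i_d$ and $v\preceq u_i$. Because $B_{1j}\subseteq B_{1i}$, the first $k$ indices of $u_i$ are $\le h+k-1$ (the initial block of $u_i$ contains $\{h,\ldots,h+k-1\}$, so its first $k$ entries are at most $h+k-1$), hence $\{i_1,\ldots,i_k\}\subseteq[1,h+k-1]$; then $A\cap\{i_1,\ldots,i_k\}\ne\emptyset$ by the same counting argument as in Theorem~\ref{multi} (otherwise $A\cup\{i_1,\ldots,i_k\}$ would be an $(h+k)$-subset of $[1,h+k-1]$). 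Thus $v\in P_A$. This shows $I\subseteq P_A$, and $\height(P_A)=h$ forces $P_A$ to be a minimal prime of the correct height.

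Next I would show the converse: any minimal prime $P=P_A$ of $I$ with $|A|=h$ satisfies $A\subseteq[1,h+k-1]$. Here I would use that $B(u_j)\subseteq I\subseteq P_A$ and that $\height(B(u_j))=\min(u_j)=h=\height(P_A)$ by Proposition~\ref{height}; hence $P_A$ is also a minimal prime of $B(u_j)$ of height $h$. Now Theorem~\ref{multi} applied to the principal Borel ideal $B(u_j)$ — whose first block is exactly $B_{1j}=\{h,\ldots,h+k-1\}$ — tells us that every height-$h$ minimal prime of $B(u_j)$ is $P_A$ with $A\subseteq[1,\max(B_{1j})]=[1,h+k-1]$. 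This gives the desired inclusion. Combining the two directions, the height-$h$ minimal primes of $I$ are exactly the $P_A$ with $A\subseteq[1,h+k-1]$, $|A|=h$, and their number is $\binom{h+k-1}{h}=\binom{\max(B_{1j})}{|B_{1j}|-1}$, which equals $e(S/I)$.

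The only real subtlety — and the step I expect to need the most care — is the first direction, specifically verifying that $\{i_1,\ldots,i_k\}\subseteq[1,h+k-1]$ for every $v\preceq u_i$ across all $i$; this is where the hypothesis $B_{1j}\subseteq B_{1i}$ is used in an essential way, and one must be careful that it is $B_{1j}$ (the smallest first block, sitting highest) that controls the relevant window $[1,h+k-1]$ uniformly. The converse direction is comparatively painless since it just invokes Theorem~\ref{multi} for the single generator $u_j$ together with the height computation from Proposition~\ref{height}; no new block-combinatorics is required there.
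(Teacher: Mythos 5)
Your proof is correct and follows essentially the same route as the paper: both identify the height-$h$ minimal primes of $I$ with those of the principal squarefree Borel ideal $B(u_j)$ and then count them via Theorem~\ref{multi}, using Proposition~\ref{height} to match heights. The only (minor) difference is in showing $I\subseteq P_A$: you rerun the block-counting argument directly on the generators of each $B(u_i)$, whereas the paper invokes Theorem~\ref{multi} as a black box for each $B(u_i)$ by choosing a subset $C_i\subseteq A$ with $|C_i|=\height B(u_i)$, so that $B(u_i)\subseteq P_{C_i}\subseteq P_A$.
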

\begin{proof}
Without loss of generality assume that $j=1$ and  $B_{11}\subseteq B_{1i}$ for all $i$. Let $B_{11}=[h,b]$. Then $h=\min(u_1)\geq \min(u_i)$ for any $1\leq i\leq m$. Set $J=B(u_1)$. Then by Proposition~\ref{height}, $\height(I)=\height(J)=\min(u_1)=h$. Therefore, it is enough to show that the set of minimal prime ideal of $I$ of height $h$ and that of $J$ are the same. If $P_A$ is a minimal prime ideal of $I$ with $\height(P_A)=h$, since $J\subseteq I\subseteq P_A$ 
and $\height(J)=h=\height(P_A)$, then $P_A$ is a  minimal prime ideal of $J$, as well. Now, let $P_A$ be a minimal prime ideal of $J$ of height $h$. Then by Theorem~\ref{multi}, $A\subseteq [1,b]$. For any $2\leq i\leq m$, let $J_i=B(u_i)$ and $B_{1i}=[a_i,b_i]$. Then by our assumption, $a_i\leq h\leq b\leq b_i$. Moreover, if $C_i$ be an arbitrary subset of $A$ with $|C_i|=a_i$, then $C_i\subseteq [1,b_i]$. By Theorem~\ref{multi}, $P_{C_i}$ is a minimal prime ideal of $J_i$. Hence $u_i\in J_i\subseteq P_{C_i}\subseteq P_A$ for all $i$. Hence $I\subseteq P_A$ and $P_A$ is a minimal prime ideal of $I$, as desired. 
\end{proof}	

In the next result we describe the analytic spread of an equigenerated squarefree  Borel ideal in terms of the block decomposition of its Borel generators. 
To prove this result, we use~\cite[Lemma 4.3]{DHQ} which relates the analytic spread of an equigenerated monomial ideal $I$ with linear relations to some combinatorial invariants of the so called linear relation graph of $I$.

Let $I$ be a monomial ideal with $G(I)=\{u_1,\ldots,u_m\}$. The {\em linear relation graph} $\Gamma$ of $I$ is the graph with edge set
\[
E(\Gamma)=\{\{i,j\}\: \text{there exist $u_k,u_l\in G(I)$ such that $x_iu_k=x_ju_l$}\}
\]
and vertex set $V(\Gamma)=\Union_{\{i,j\}\in E(\Gamma)}\{i,j\}$.

\begin{Theorem}
\label{analytic spread}
Let $I$ be an equigenerated squarefree  Borel ideal with Borel generators   $u_{1},\ldots, u_{m}$ which is not a principal ideal. For $i=1,\ldots,m$, let $B_{i1}\sqcup B_{i2}\sqcup\cdots \sqcup B_{ik_i}$ be the block decomposition of $u_i$ and set $n=\max\{\max(u_i)\;\:  1 \leq i\leq m\}$. Then
\[
\ell(I) =  \left\{
\begin{array}{ll}
n  &   \text{if $1\notin \bigcap_{i=1}^m B_{i1}$,}\\
n-|\bigcap_{i=1}^m B_{i1}|, & \text{otherwise.}\\
\end{array}
\right. \]
\end{Theorem}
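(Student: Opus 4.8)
The plan is to apply \cite[Lemma 4.3]{DHQ}, which for an equigenerated monomial ideal $I$ with linear relations states that $\ell(I) = \max\{\mathrm{rank}(\Gamma), \dim I\}$ (where $\mathrm{rank}(\Gamma) = |V(\Gamma)| - c$ with $c$ the number of connected components of the linear relation graph $\Gamma$), and in fact $\ell(I)=\mathrm{rank}(\Gamma)+1$ when $\Gamma$ is connected and meets every generator. Since an equigenerated squarefree Borel ideal with at least two Borel generators always has linear relations (indeed it has linear quotients by Proposition~\ref{lq}), the strategy reduces to an explicit computation of the vertex set $V(\Gamma)$ and the component count of $\Gamma$. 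First I would describe $V(\Gamma)$: I claim $V(\Gamma) = [1,n] \setminus \bigcap_{i=1}^m B_{i1}$ when $1\in\bigcap_i B_{i1}$, and $V(\Gamma)=[1,n]$ otherwise. The inclusion $V(\Gamma)\subseteq [1,n]$ is clear since no generator involves a variable $x_\ell$ with $\ell>n$. For the reverse inclusion, given any $\ell\le n$ not forced to be excluded, I would exhibit generators $u_k,u_l$ and an index $j$ with $x_\ell u_k = x_j u_l$, using the Borel-exchange moves: if $v\in G(I)$ has $x_{\ell-1}\mid v$, $x_\ell\nmid v$ (or the analogous move shifting $\ell$ up to $\ell+1$), then $v$ and the swapped monomial give an edge at $\ell$; the only vertices that can fail to appear are those $\ell$ lying in the common initial block $\bigcap_i B_{i1}$ of every generator, because there no generator has a "gap" adjacent to $\ell$ to create a relation. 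This is where I expect to spend the most care: showing precisely that every $\ell$ outside the common first block is a vertex (one must use that there are $\geq 2$ Borel generators and analyze positions relative to each $B_{i1}$) while every $\ell$ inside it is not.

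Next I would argue that $\Gamma$ is connected. The natural approach is to show that the generator $u_i$ with the lexicographically largest support, or the "bottom" generator $w = \prod x_j$ over an initial segment, is linked through a chain of Borel moves to every vertex: each single-step Borel exchange $x_a(v/x_b)$ with $a<b$ produces an edge $\{a,b\}$ in $\Gamma$ (when the exchanged monomial is still in $G(I)$), and one can walk from any generator down to a canonical one by such moves, so all the vertices touched along the way lie in one component. Combining this with the vertex-set computation gives $\mathrm{rank}(\Gamma) = |V(\Gamma)| - 1$, i.e.\ $n-1$ in the first case and $n - |\bigcap_i B_{i1}| - 1$ in the second. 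Finally I would observe $\dim I = \dim S/I = n - \mathrm{height}(I)$ is dominated by $\mathrm{rank}(\Gamma)+1$ (using Proposition~\ref{height}, $\mathrm{height}(I)=\max_i \min(u_i)\geq 1$), so \cite[Lemma 4.3]{DHQ} yields $\ell(I) = \mathrm{rank}(\Gamma)+1$, which equals $n$ in the first case and $n-|\bigcap_{i=1}^m B_{i1}|$ in the second, as claimed.

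The main obstacle, as indicated, is the exact determination of $V(\Gamma)$ — in particular verifying that the common first block $\bigcap_i B_{i1}$ (when it starts at $1$) is precisely the set of non-vertices. For the "not a vertex" direction I would look at which monomials can possibly be involved in a relation $x_\ell u_k = x_j u_l$: if $\ell$ lies in every first block and those first blocks all begin at $1$, then every generator $v\in G(I)$ satisfies $x_1 x_2\cdots x_\ell \mid v$ (this needs the Borel structure: since the Borel generators all have $x_\ell$ preceded by the full initial run, and $v\preceq u_i$ for some $i$, one checks $x_1\cdots x_\ell\mid v$), so $x_\ell$ can never be "moved" and $\ell\notin V(\Gamma)$. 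For the "is a vertex" direction, for $\ell$ failing that condition there is some Borel generator $u_i$ and a gap allowing a one-step exchange at position $\ell$ or $\ell\pm1$ that stays within $G(I)$, and the resulting edge puts $\ell$ in $V(\Gamma)$; handling $\ell$ near the right end (close to $\max(u_i)$) versus in the interior requires separate small cases. Once $V(\Gamma)$ and connectedness are pinned down the rest is bookkeeping with \cite[Lemma 4.3]{DHQ}.
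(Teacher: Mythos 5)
Your overall strategy coincides with the paper's: compute the linear relation graph $\Gamma$ of $I$ via single Borel exchange moves and then invoke \cite[Lemma 4.3]{DHQ}. But the substance of the paper's proof is precisely the part you defer. Showing that $\Gamma$ is connected on the vertex set $[n]$ when $1\notin \bigcap_{i=1}^m B_{i1}$ is not a routine "walk down to a canonical generator": one must exhibit explicit edges joining every $\ell$ in the initial block $B_{m1}=[1,s]$ of the generator $u_m$ with $\max(u_m)=n$ to the vertices beyond $s$, and the exchanged monomials must be certified to lie in $G(I)$ (squarefree, same degree, $\preceq$ some Borel generator). The paper's argument is a genuine case analysis (whether $1\in B_{m1}$, the position of the first block $B_{r1}=[t,p]$ of a generator with $1\notin B_{r1}$ relative to $s+1$, whether $s+1\in\supp(u_r)$, etc.), and your proposal asserts the conclusion of this analysis ("every $\ell$ outside the common first block is a vertex", "$\Gamma$ is connected") without carrying it out. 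As it stands this is a plan, not a proof, with the key claims unverified.

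There is also a concrete gap in the case $1\in\bigcap_{i=1}^m B_{i1}$, say $\bigcap_i B_{i1}=[t]$. Your identification $V(\Gamma)=[t+1,n]$ is correct (every $v\in G(I)$ is divisible by $x_1\cdots x_t$, since $v\preceq u_i$ forces the first $t$ indices of $v$ to be $1,\ldots,t$, so no relation $x_\ell u_k=x_j u_l$ can involve $\ell\le t$), but deducing $\ell(I)=|V(\Gamma)|-1+1=n-t$ needs the equality form of \cite[Lemma 4.3]{DHQ} for equigenerated ideals with linear relations. Your paraphrase of that lemma ($\ell(I)=\max\{\operatorname{rank}(\Gamma),\dim I\}$, equality when ``$\Gamma$ is connected and meets every generator'') is not its statement, and the proviso you attach is exactly what is delicate here: the variables $x_1,\ldots,x_t$ occur in every generator yet are not vertices of $\Gamma$. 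If the lemma only yields the lower bound $\ell(I)\ge r-s+1$ (which, together with $\ell(I)\le n$, suffices in the first case), your argument is missing the upper bound $\ell(I)\le n-t$. The paper avoids this issue by writing $I=zJ$ with $z=x_1\cdots x_t$, where $J$ is (after relabelling) a squarefree Borel ideal in $n-t$ variables whose Borel generators no longer all contain $1$ in their first block; since $\ell(I)=\ell(J)$, the first case applied to $J$ gives $\ell(I)=n-t$, and the reduction supplies the upper bound for free. To repair your version, either quote the equality statement of \cite[Lemma 4.3]{DHQ} precisely and verify its hypotheses in this situation, or add the factorization step $I=zJ$.
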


\begin{proof}
Without loss of generality, let $u_m$ be a Borel generator with $n=\max(u_m)$.
We may assume that $\ell\notin\supp(u_m)$ for some $\ell<n$, since otherwise $u_m=\prod_{i=1}^n x_i$ and $I=(u_m)$, which is not the case.
For any $j\notin \supp(u_m)$, $v=x_j(u_m/x_n)\in G(I)$. Hence $x_ju_m=x_nv$, which implies that  $\{j,n\}\in E(\Gamma)$, where $\Gamma$ is the linear relation graph of $I$. 

First we consider the case that $1\notin \bigcap_{i=1}^m B_{i1}$. It is enough to show that $\Gamma$ is a connected graph on the vertex set $[n]$. Then, by ~\cite[Lemma 4.3]{DHQ}, we will get $\ell(I)=n$, as desired. By our assumption $1\notin B_{r1}$ for some $r$. First suppose that $r=m$ and $1\notin B_{m1}$.
Thus $1\notin \supp(u_m)$, it follows that $\{1,n\}\in E(\Gamma)$ and $v_j=x_1(u_m/x_j)\in G(I)$ for any $j\in\supp(u_m)$. So $\{1,j\}\in E(\Gamma)$ for any $j\in\supp(u_m)$. This implies that $\Gamma$ is a connected graph on $[n]$, as desired.

Now assume that $r\neq m$ and $1\in B_{m1}$. Let $B_{m1}=\{1,2,\ldots,s\}$. Then $s+1\notin\supp(u_m)$ and for any $j\in \supp(u_m)$ with $j>s$, we have $v=x_{s+1}(u_m/x_j)\in G(I)$. This  implies that $\{j,s+1\}\in E(\Gamma)$. 
In order to show that $\Gamma$ is connected, it is enough to show that for any $1\leq j\leq s$, $j$ is connected by a path to a vertex in $\{s+1,s+2,\ldots,n\}$.
Let $B_{r1}=\{t,t+1,\ldots,p\}$. If $s+1<t$, we have $\{j,s+1\}\in E(\Gamma)$ for any $j<s+1$. Indeed, one has $v=x_j(u_r/x_t)\in G(I)$ and $w=x_{s+1}(u_r/x_t)\in G(I)$  since  $j<s<t$ and  $B_{r1}=\{t,t+1,\ldots,p\}$.  It follows that $x_{s+1}v=x_jw$. So we may assume that $t\leq s+1$. 
 
 If $s+1\in\supp(u_r)$, since $t\leq s+1$,  we have $v=x_i(u_r/x_{s+1})\in G(I)$ for any $i<t$ or any $i$ with $t\leq i<s+1$ and $i\notin \supp(u_r)$. Then for such $i$'s we have $\{i,s+1\}\in E(\Gamma)$. If $t\leq i<s+1$ and $i\in\supp(u_r)$, then $\{1,i\}\in E(\Gamma)$ and $i,1,s+1$ is a path in $\Gamma$ and we are done in this case. 
 
 Now, we may assume that $s+1\notin\supp(u_r)$. Therefore $p<s+1\leq \deg(u_r)$ and hence there exists $h>s+1$ such that $h\in\supp(u_r)$. Then $v=x_{s+1}(u_r/x_h)\in G(I)$ and $w=x_{i}(u_r/x_h)\in G(I)$ for any $i<t$ or any $t\leq i\leq s$ with $i\notin\supp(u_r)$. Since $x_iv=x_{s+1}w$, we have
 $\{i,s+1\}\in E(\Gamma)$ for any $i<t$ or any $t\leq i\leq s$ with $i\notin\supp(u_r)$. Also for any $t\leq i\leq s$ with $i\in\supp(u_r)$, we have $\{1,i\}\in E(\Gamma)$ since $v=x_1(u_r/x_i)\in G(I)$. Hence $i,1,s+1$ is a path in $\Gamma$. So $\Gamma$ is connected.

Now, consider the case that $1\in \bigcap_{i=1}^m B_{i1}$ and let $\bigcap_{i=1}^m B_{i1}=[t]$. Then $z=\prod_{i=1}^t x_i$ divides any element in $G(I)$. Hence $I=zJ$, where $J$ is a monomial ideal in $K[x_{t+1},\ldots,x_n]$. The ideal $J$ is isomorphic to a squarefree Borel ideal $J'$ such that $x_1$ does not divide some Borel generator of $J'$. Since $J'$ is an ideal in the polynomial ring with $n-t$ variables and $J'\iso I$. By the first part of the proof, one has $\ell(I)=\ell(J)=n-t$.
\end{proof}

\section{Homological shift ideals of equigenerated squarefree Borel ideals}

In this section, for an equigenerated squarefree Borel ideal $I$, we obtain the Borel generators of the homological shift ideals of $I$. Moreover, we study the behaviour of some algebraic invariants of these shift ideals.

Let
 $I\subset S$ be a monomial ideal with minimal multigraded free $S$-resolution
\[
\FF: 0\longrightarrow F_q\longrightarrow F_{q-1}\longrightarrow\cdots \longrightarrow F_1\longrightarrow F_0\longrightarrow I\longrightarrow 0,
\]
where $F_i=\Dirsum_{j=1}^{b_i}S(-\ab_{ij})$. The vectors $\ab_{ij}$ are called the {\em multigraded shifts} of the resolution  $\FF$.
The monomial ideal $\HS_i(I)=(\xb^{\ab_{ij}}\: j=1,\ldots,b_i)$ is called the {\em $i$th homological shift ideal} of $I$. Note that $\HS_0(I)=I$. 

Let $u$ be a squarefree monomial. A positive integer $i$ is called a {\em gap} of $u$ if $i<\max(u)$ and  $x_i$ does not divide $u$.  The set of all gaps of $u$ is denoted by $\gap(u)$ and the maximal element of $\gap(u)$ is called the {\em maximal gap} of $u$. 

\begin{Proposition}
\label{gen1}
Let $I=B_1(u_{1},\ldots, u_{m})$ be an equigenerated squarefree Borel ideal. Then  
\[
\HS_1(I)=B_1(x_{p_1}u_1,\ldots,x_{p_m}u_m),
\]
where for each $i$, $p_i$ is the maximal gap of $u_i$.
\end{Proposition}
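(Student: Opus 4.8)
The plan is to combine the explicit description of the Koszul homology from Corollary~\ref{j=1} (in the case $k=1$) with the fact, established in Section~1, that a squarefree Borel ideal $I$ is $1$-Borel, so that all the machinery of Section~2 applies. By Corollary~\ref{j=1}, a $K$-basis of $H_1(\xb;S/I)$ is given by the homology classes $u'e_j\wedge e_{m(u)}$ where $u\in G(I)$, $j<m(u)$, and $\Deg(x_ju)$ is $1$-bounded; the latter condition simply says $x_j\nmid u$, i.e.\ $j\in\gap(u)$. Translating into multigraded shifts, this means that the second syzygy module of $S/I$ (equivalently, the first syzygy module of $I$) has multigraded shifts exactly the vectors $\Deg(x_j u)$ with $u\in G(I)$ and $j\in\gap(u)$. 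Hence, as a first step, I would record that $\HS_1(I)$ is, up to removing redundant generators, generated by the monomials $x_j u$ with $u\in G(I)$ and $j\in\gap(u)$.

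The second step is to identify which of these monomials are actual minimal generators, and to recognize the resulting ideal as a principal-part Borel ideal. Note every monomial $x_j u$ with $j\in\gap(u)$ is squarefree (since $x_j\nmid u$). I claim that $\HS_1(I)$ is squarefree Borel — equivalently $1$-Borel. For this I would invoke Corollary~\ref{clever} mentioned in the introduction, or argue directly: if $w=x_j u\in G(\HS_1(I))$ with $u\in G(I)$, $j\in\gap(u)$, and $i<\ell\in\supp(w)$ with $x_i\nmid w$, I must show $x_i(w/x_\ell)\in\HS_1(I)$. One checks this by a short case analysis on whether $\ell=j$ or $\ell\in\supp(u)$, using that $I$ itself is $1$-Borel to move exponents inside $u$, and tracking what happens to the gap set; this is routine bookkeeping of the kind already carried out in the proof of Proposition~\ref{lq}. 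Granting that $\HS_1(I)$ is squarefree Borel, its set of Borel generators is the set of $\preceq$-maximal elements of $G(\HS_1(I))$, by the remark opening Section~3.

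The third and central step is to show that the $\preceq$-maximal elements among $\{x_j u: u\in G(I),\, j\in\gap(u)\}$ are exactly $x_{p_1}u_1,\dots,x_{p_m}u_m$, where $p_i$ is the maximal gap of $u_i$. One direction: for each $i$, among all monomials $x_j u$ with $u\preceq u_i$ and $j\in\gap(u)$, I would verify that $x_{p_i}u_i$ is $\preceq$-maximal, using Lemma~\ref{lemorder} to compare coordinatewise — replacing $u$ by the larger $u_i$ only increases the sorted exponent sequence, and among gaps of $u_i$ the largest one $p_i$ gives the largest possible $j$, pushing the sorted sequence of $x_j u_i$ as far right as possible while it still lies in $B_1$. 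The other direction is that every $x_j u$ with $u\in G(I)$, $j\in\gap(u)$ satisfies $x_j u \preceq x_{p_i} u_i$ for some $i$: choose $i$ with $u\preceq u_i$; then since $j\in\gap(u)\subseteq\{1,\dots,\max(u)-1\}$ and $u\preceq u_i$, one gets $j\le p_i'$ for the appropriate gap $p_i'$ of $u_i$, and $p_i'\le p_i$, so a coordinatewise comparison via Lemma~\ref{lemorder} gives $x_j u\preceq x_{p_i}u_i$. Assembling the two directions yields $\HS_1(I)=B_1(x_{p_1}u_1,\dots,x_{p_m}u_m)$.

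\textbf{Main obstacle.} The delicate point is the third step — precisely pinning down the $\preceq$-maximal monomials among the generators $x_j u$. The subtlety is that taking a larger $u$ (in the $\preceq$ order) may simultaneously change the gap set, so it is not immediately obvious that replacing $u$ by $u_i$ and $j$ by the maximal gap $p_i$ of $u_i$ keeps us inside $B_1(x_{p_i}u_i)$; one must compare the sorted exponent vectors carefully and rule out the possibility that some "intermediate" choice of gap beats $p_i$. I expect this to require the observation that $\max(x_j u)=\max(u)$ for $j<\max(u)$, together with a careful coordinatewise estimate. Everything else — that $\HS_1(I)$ is squarefree Borel, and the initial identification of generators from the Koszul basis — is either cited from the introduction's Corollary~\ref{clever} or is a direct consequence of Corollary~\ref{j=1} with $k=1$.
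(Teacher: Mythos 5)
Your overall skeleton matches the paper's: both arguments use Corollary~\ref{j=1} to see that $\HS_1(I)$ is generated by the monomials $x_jv$ with $v\in G(I)$ and $j\in\gap(v)$, and both reduce the statement to a coordinatewise comparison of $x_jv$ with $x_{p_t}u_t$ via Lemma~\ref{lemorder}. However, there are two genuine gaps. First, for the inclusion $B_1(x_{p_1}u_1,\ldots,x_{p_m}u_m)\subseteq \HS_1(I)$ you need that $\HS_1(I)$ is itself squarefree Borel, and your primary suggestion, invoking Corollary~\ref{clever}, is circular: in the paper that corollary is deduced from the very proposition you are proving. The paper instead imports this fact from \cite[Proposition 3.1]{BJT}, which gives both that $\HS_1(I)$ is squarefree Borel and that $x_{p_i}u_i\in \HS_1(I)$. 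Your fallback of checking the exchange property directly on the generators $x_ju$ is not the routine bookkeeping you claim: if the exchange removes $\max(u)$, the result can be $x_jw$ with $j>\max(w)$, so it is no longer presented as ``a minimal generator of $I$ times one of its gap variables'' and must be re-presented. For instance, with $u=x_1x_5$ and $j=4$ the generator is $x_1x_4x_5$; exchanging $x_5$ for $x_2$ gives $x_1x_2x_4=x_4\cdot(x_1x_2)$ with $4>\max(x_1x_2)$, and one has to rewrite it as $x_2\cdot(x_1x_4)$. Doing this systematically is essentially the same work as the inclusion being proved, so this step needs either a real argument or the external citation.

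Second, your sketch of the central comparison rests on a false intermediate claim. You assert that from $u\preceq u_i$ and $j\in\gap(u)$ one gets $j\le p_i'$ for some gap $p_i'$ of $u_i$, hence $j\le p_i$. This fails: take $u=x_1x_4x_5$ and $u_i=x_3x_4x_5$; then $u\preceq u_i$, the maximal gap of $u$ is $3$, but $\gap(u_i)=\{1,2\}$, so $j=3$ exceeds every gap of $u_i$. The conclusion $x_ju\preceq x_{p_i}u_i$ is still true here ($x_1x_3x_4x_5\preceq x_2x_3x_4x_5$), but it cannot be obtained through the inequality $j\le p_i$. The paper's proof instead writes out the sorted supports of $x_jv$ and $x_{p_t}u_t$, splits into the three cases determined by the relative positions of the insertion points of $j$ and $p_t$, and in the delicate case (your situation, where $j$ is inserted to the right of $p_t$) uses that $p_t$ is the \emph{maximal} gap of $u_t$, so the indices of $u_t$ beyond $p_t$ are consecutive, which forces the required coordinatewise inequalities. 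You correctly identified this step as the main obstacle, but as sketched your route through it would fail, so the heart of the proof is still missing.
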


\begin{proof}
By \cite[Proposition 3.1]{BJT}, $\HS_1(I)$ is a squarefree Borel ideal and $x_{p_i}u_i\in \HS_1(I)$. So 
\[
B_1(x_{p_1}u_1,\ldots,x_{p_m}u_m)\subseteq \HS_1(I).
\]
Now, consider a generating monomial of $\HS_1(I)$, which by Corollary~\ref{j=1} is of the form $x_{\ell}v$  for some $v\in G(I)$ and some $\ell\in\gap(v)$.  Then by~\Cref{partial}, there exists $1 \leq t\leq m$ such that $v\preceq u_t$. We show that $x_{\ell}v\preceq x_{p_t}u_t$. Without loss of generality we may assume that $\ell$ is the maximal gap of $v$.  Let $v=x_{j_1}\cdots x_{j_{d}}$, $u_t=x_{i_1}\cdots x_{i_{d}}$ with  $j_1<\cdots< j_{d}$ and $i_1<\cdots<i_{d}$, where $d$ is the degree of the minimal monomial generators of $I$. We have $j_{r}\leq i_{r}$ for all $1 \leq r\leq d$. We set $j_0=i_0=0$.
Let $s$ and $k$ be integers with $0 \leq s\leq d-1$ and $0 \leq k\leq d-1$ such that $j_s<\ell<j_{s+1}$ and $i_k<p_t<i_{k+1}$. Then $\ell=j_{s+1}-1$ and $p_t=i_{k+1}-1$. 
We may write $x_{\ell}v=x_{j'_1}\cdots x_{j'_{d+1}}$, $x_{p_t}u_t=x_{i'_1}\cdots x_{i'_{d+1}}$, where $j'_1<\cdots<j'_{d+1}$ and $i'_1<\cdots<i'_{d+1}$. Then $j'_{s+1}=\ell=j_{s+1}-1$ and $i'_{k+1}=p_t=i_{k+1}-1$. 

First suppose that $k=s$. Then $j'_r=j_r\leq i_r=i'_r$ for any $r<k+1$. Also  $j'_{k+1}=\ell=j_{k+1}-1\leq i_{k+1}-1=i'_{k+1}=p_t$ and $j'_r=j_{r-1}\leq i_{r-1}=i'_r$ for any $r>k+1$. Hence $x_{\ell}v\preceq x_{p_t}u_t$, as desired. 

Now, suppose that $s<k$. Then $j'_r=j_r\leq i_r=i'_r$ for any $r\leq s$. Also $j'_{s+1}=\ell=j_{s+1}-1\leq i_{s+1}-1=i'_{s+1}-1<i'_{s+1}$ and for any $s+2\leq r\leq d+1$, we have $j'_r=j_{r-1}\leq i_{r-1}$. Note that $i_{r-1}=i'_{r-1}$ or $i_{r-1}=i'_r$, which implies that $i_{r-1}\leq i'_r$. Hence $j'_r\leq i'_r$ for all $r$, as desired.

Finally, consider the case that $s>k$. Then $j'_r=j_r\leq i_r=i'_r$ for any $1\leq r\leq k$. Also $j'_r=j_d-(d-r+1)\leq i_d-(d-r+1)=i'_r$ for any $s+1\leq r\leq d+1$. 
Now, suppose that
$k+1\leq r\leq s$. Then $j'_{r}=j_r<j_d-(d-r)\leq i_d-(d-r)$. Since $i_{k+1}-1$ is the maximal gap of $u_t$, we have $i_d-(d-r)=i_r$ for any $k+1\leq r\leq s$. Hence   $j'_r\leq i_r-1$ for any $k+1\leq r\leq s$. When $k+2\leq r\leq s$, we have $i_r-1=i_{r-1}=i'_r$ and hence $j'_r\leq i'_r$. Also when $r=k+1$, we have $j'_{k+1}\leq  i_{k+1}-1=i'_{k+1}$.  Therefore, $x_{\ell}v\preceq x_{p_t}u_t$.
\end{proof}

As an immediate corollary of~\Cref{gen1}, we have 

\begin{Corollary}\label{clever}
Let $I=B_1(u_{1},\ldots, u_{m})$ be an equigenerated squarefree Borel ideal. Then for any $k\geq 1$,
\begin{itemize}
    \item[(a)] $\HS_k(I)=B_1(x_{p_{11}}\cdots x_{p_{1k}}u_1,\ldots,x_{p_{m1}}\cdots x_{p_{mk}}u_m)$, where  ~$p_{i1},\ldots,p_{ik}$ are maximal possible distinct integers in $\gap(u_i)$. 
    \item[(b)] $\HS_1(\HS_k(I))=\HS_{k+1}(I)$.
\end{itemize}
\end{Corollary}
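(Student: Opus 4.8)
The plan is to prove part (a) by induction on $k$, using Proposition~\ref{gen1} as the base case $k=1$, and then deduce part (b) essentially for free from (a). For the inductive step the key observation is that, once we know $\HS_{k-1}(I) = B_1(v_1,\ldots,v_m)$ with $v_i = x_{p_{i1}}\cdots x_{p_{i,k-1}}u_i$ where $p_{i1},\ldots,p_{i,k-1}$ are the $k-1$ largest gaps of $u_i$, we may apply Proposition~\ref{gen1} again to the equigenerated squarefree Borel ideal $\HS_{k-1}(I)$. By Proposition~\ref{gen1}, $\HS_1(\HS_{k-1}(I)) = B_1(x_{q_1}v_1,\ldots,x_{q_m}v_m)$, where $q_i$ is the maximal gap of $v_i$. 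So the crux is the purely combinatorial claim: the maximal gap of $v_i = x_{p_{i1}}\cdots x_{p_{i,k-1}}u_i$ (where we have adjoined the $k-1$ largest gaps of $u_i$) equals the $k$th largest gap of $u_i$, which I would denote $p_{ik}$. Granting this, $x_{q_i}v_i = x_{p_{ik}}x_{p_{i,k-1}}\cdots x_{p_{i1}}u_i$ has exactly the desired form, and we would also need to know that $\HS_1(\HS_{k-1}(I)) = \HS_k(I)$; this last identity is exactly what part (b) asserts at level $k-1$, so in fact it is cleanest to prove (a) and (b) together by simultaneous induction on $k$.

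Here is how I would organize the simultaneous induction. The base case $k=1$ is Proposition~\ref{gen1} for (a), and (b) at $k=1$ reads $\HS_1(\HS_1(I)) = \HS_2(I)$, which will follow once (a) is established at $k=2$. For the inductive step, assume (a) holds for $k-1$ and (b) holds for all smaller indices; write $\HS_{k-1}(I) = B_1(v_1,\ldots,v_m)$ with $v_i$ as above. By Proposition~\ref{gen1} applied to the squarefree Borel ideal $\HS_{k-1}(I)$, we get $\HS_1(\HS_{k-1}(I)) = B_1(x_{q_1}v_1,\ldots,x_{q_m}v_m)$ with $q_i = $ maximal gap of $v_i$. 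I would then prove the combinatorial lemma that $q_i$ equals the $k$th largest gap of $u_i$ among $\gap(u_i)$, so that the ideal on the right is precisely $B_1(x_{p_{i1}}\cdots x_{p_{ik}}u_i : 1\le i\le m)$. Finally, to close the loop, I need $\HS_1(\HS_{k-1}(I)) = \HS_k(I)$; I would establish this by induction as well, using the fact (from \cite[Proposition 3.1]{BJT}, as invoked in the proof of Proposition~\ref{gen1}, together with the iterated mapping cone structure of the resolution of a Borel ideal with linear quotients, cf.\ Proposition~\ref{regdec} and Corollary~\ref{anyorder}) that for an equigenerated ideal with linear quotients and regular decomposition function, the first homological shift of the $(k-1)$st shift ideal recovers the $k$th shift ideal. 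With that in hand, (a) at level $k$ follows, and (b) at level $k-1$ is then just a restatement.

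The main obstacle I anticipate is the combinatorial lemma on gaps: proving that the maximal gap of $v_i = x_{p_{i1}}\cdots x_{p_{i,k-1}}u_i$ is the $k$th largest gap of $u_i$. The subtlety is that inserting the largest gap $p_{i1} = $ maximal gap of $u_i$ into the support of $u_i$ generally shifts which integers count as gaps: the element $p_{i1}$ is no longer a gap, but also $\max(v_i) = \max(u_i)$ is unchanged (since $p_{i1} < \max(u_i)$), so every gap of $u_i$ other than $p_{i1}$ remains a gap of $x_{p_{i1}}u_i$, and conversely $\gap(x_{p_{i1}}u_i) = \gap(u_i)\setminus\{p_{i1}\}$. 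Iterating, $\gap(v_i) = \gap(u_i)\setminus\{p_{i1},\ldots,p_{i,k-1}\}$, whose maximum is exactly the $k$th largest element of $\gap(u_i)$, as desired; the one technical point to verify carefully is that adjoining $p_{ij}$ at each stage does not create any new gap, which holds precisely because each $p_{ij} < \max(u_i) = \max(v_i^{(j-1)})$ and $p_{ij}\notin\supp(v_i^{(j-1)})$. I also want to be careful that the integers $p_{i1},\ldots,p_{ik}$ are indeed "maximal possible distinct integers in $\gap(u_i)$" in the sense the statement intends, i.e.\ that at each stage Proposition~\ref{gen1} picks up the current maximal gap, which it does by definition. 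A secondary, more routine obstacle is cleanly justifying $\HS_1(\HS_{k-1}(I)) = \HS_k(I)$ for this class of ideals; since every $\HS_j(I)$ is again an equigenerated squarefree Borel ideal with linear quotients (by Proposition~\ref{gen1} and the results of Section~2), the structure of the iterated mapping cone resolution makes this standard, but it should be stated explicitly rather than left implicit.
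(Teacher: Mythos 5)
Your reduction of (b) to gap bookkeeping is correct: indeed $\gap(x_pu)=\gap(u)\setminus\{p\}$ for any $p\in\gap(u)$, so the maximal gap of $x_{p_{i1}}\cdots x_{p_{i,k-1}}u_i$ is the $k$th largest gap of $u_i$. The genuine gap is the step where you ``close the loop.'' You deduce (a) at level $k$ from the identity $\HS_1(\HS_{k-1}(I))=\HS_k(I)$, and you justify that identity by appealing to a purported general fact that for an equigenerated ideal with linear quotients and regular decomposition function the first homological shift of the $(k-1)$st shift ideal recovers the $k$th shift ideal. No such result is proved or cited in the paper, and it is not a known theorem; linear quotients (via \cite[Lemma 1.5]{HT}) describe $\HS_k(I)$ in terms of the resolution of $I$ itself, but they give no comparison between $\HS_1(\HS_{k-1}(I))$ and $\HS_k(I)$ --- that comparison is exactly statement (b), which you are trying to prove, and it is of the same nature as the open conjecture on homological shifts mentioned in the introduction. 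Since in your scheme (b) at level $k-1$ is ``just a restatement'' of that fact and (a) at level $k$ is deduced from it, the entire induction rests on an unproven claim.

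The paper's route goes in the opposite direction and avoids this: (a) is obtained for all $k$ without ever using (b), because the multigraded shifts of $I$ in homological degree $k$ are known explicitly from the resolution of $I$ --- by Corollary~\ref{j=1} (equivalently by \cite[Lemma 1.5]{HT}, since here $\set(u)=\gap(u)$) the ideal $\HS_k(I)$ is generated by the monomials $\xb^Fu$ with $u\in G(I)$, $F\subseteq\gap(u)$, $|F|=k$. One then argues exactly as in the proof of Proposition~\ref{gen1}, adjoining $k$ gaps instead of one (for instance writing $\xb^Fu=x_f\cdot\xb^{F\setminus\{f\}}u$ and comparing inside $\HS_k(I)$ rather than inside $\HS_1(\HS_{k-1}(I))$), to see that every such generator is $\preceq x_{p_{t1}}\cdots x_{p_{tk}}u_t$ for a suitable $t$; conversely each $x_{p_{i1}}\cdots x_{p_{ik}}u_i$ is itself such a shift, and $\HS_k(I)$ is squarefree Borel by the same fact from \cite{BJT} invoked for $k=1$. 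With (a) in hand for all $k$, part (b) is then the easy consequence: apply Proposition~\ref{gen1} to $\HS_k(I)=B_1(v_1,\ldots,v_m)$ and use precisely your gap computation to identify the resulting Borel generators with those of $\HS_{k+1}(I)$. So your combinatorial lemma is the right ingredient for (b), but you must replace the appeal to the unproven identity by a direct determination of the generators of $\HS_k(I)$ from the resolution of $I$.
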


Using the description of the Borel generators of homological shift ideals $\HS_k(I)$ in Corollary~\ref{clever}, the description for the height in Proposition~\ref{height} and analytic spread in Theorem \ref{analytic spread} we get

\begin{Corollary}
\label{nond}
Let $I=B_1(u_{1},\ldots, u_{m})$ be an equigenerated squarefree Borel ideal. Then $\height(\HS_{k+1}(I))\leq \height(\HS_{k}(I))$ and $\ell(\HS_{k+1}(I))\leq \ell(\HS_{k}(I))$ for all $k$.
\end{Corollary}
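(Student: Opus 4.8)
\textbf{Proof proposal for Corollary~\ref{nond}.} The plan is to use Corollary~\ref{clever}(b), which reduces everything to understanding a single homological shift: since $\HS_{k+1}(I)=\HS_1(\HS_k(I))$ and each $\HS_k(I)$ is again an equigenerated squarefree Borel ideal, it suffices to prove that $\height(\HS_1(J))\le\height(J)$ and $\ell(\HS_1(J))\le\ell(J)$ for an arbitrary equigenerated squarefree Borel ideal $J=B_1(v_1,\ldots,v_m)$. (When $J$ is principal, the analytic spread statement needs separate attention because Theorem~\ref{analytic spread} excludes principal ideals; one reduces a principal squarefree Borel ideal to a polynomial subring as in the last paragraph of that proof, or argues directly that $\ell$ of a principal ideal is $1$.)

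For the height, I would combine Proposition~\ref{height} with Proposition~\ref{gen1}. Proposition~\ref{height} gives $\height(J)=\max_i\min(v_i)$ and $\height(\HS_1(J))=\max_i\min(x_{p_i}v_i)$, where $p_i$ is the maximal gap of $v_i$. The key elementary observation is that $p_i<\max(v_i)$ by definition of a gap, so $x_{p_i}v_i$ is still a squarefree monomial with $\min(x_{p_i}v_i)\le\min(v_i)$ (multiplying by an extra variable can only decrease, or leave unchanged, the smallest index in the support). Taking the maximum over $i$ gives $\height(\HS_1(J))\le\height(J)$ immediately.

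For the analytic spread I would use Theorem~\ref{analytic spread}, so I need to track how the first blocks $B_{i1}$ of the Borel generators change when passing from $v_i$ to $x_{p_i}v_i$, together with $n=\max_i\max(v_i)$, which is \emph{unchanged} since $p_i<\max(v_i)$. Two cases: if $1\notin\bigcap_i B_{i1}$ for $J$, then $\ell(J)=n$, and since $\ell$ of any ideal in $S$ is at most $n$, we get $\ell(\HS_1(J))\le n=\ell(J)$ for free. If $1\in\bigcap_i B_{i1}$, write $[t]=\bigcap_i B_{i1}$; then every $v_i$ begins with the block $1,2,\ldots,t,\ldots$, so its maximal gap $p_i$ satisfies $p_i>t$, and multiplying by $x_{p_i}$ does not shrink the initial run $1,\ldots,t$ of the support. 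Hence the first block $B'_{i1}$ of $x_{p_i}v_i$ still contains $[t]$, so $1\in\bigcap_i B'_{i1}$ and $|\bigcap_i B'_{i1}|\ge t$. By Theorem~\ref{analytic spread}, $\ell(\HS_1(J))=n-|\bigcap_i B'_{i1}|\le n-t=\ell(J)$.

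The main obstacle is the bookkeeping on block decompositions in the second case: one must verify carefully that appending the single variable $x_{p_i}$ (with $p_i$ a gap, hence $p_i\ge t+1$ when $[t]\subseteq\supp(v_i)$) cannot disconnect or truncate the initial block $\{1,\ldots,t\}$, and that $p_i$ really does exceed $t$ whenever $1\in B_{i1}$ — this uses that a gap lies strictly below $\max(v_i)$ and that $\{1,\ldots,t\}$ being an initial block of $v_i$ forces the first gap of $v_i$ to be at least $t+1$. Once this is checked for each $i$, taking intersections and applying Theorem~\ref{analytic spread} is routine, and the induction on $k$ via Corollary~\ref{clever}(b) finishes the proof.
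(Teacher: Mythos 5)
Your proof is correct and follows exactly the route the paper intends: the corollary is stated there without a written proof, as an immediate consequence of Corollary~\ref{clever} (reduction to one step via $\HS_{k+1}(I)=\HS_1(\HS_k(I))$ and the Borel generators $x_{p_i}u_i$ from Proposition~\ref{gen1}), Proposition~\ref{height} for the height, and the block bookkeeping in Theorem~\ref{analytic spread} for the analytic spread, which is precisely what you carry out. Your explicit attention to the principal-ideal edge cases excluded by Theorem~\ref{analytic spread} (where one simply notes $\ell=1$, resp.\ $\HS_1$ may vanish) is a reasonable precaution that the paper's terse derivation glosses over.
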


A similar result to ~\Cref{nond} does not hold for the multiplicity of the homological shift ideal of equigenerated squarefree Borel ideals. However, we have 

\begin{Proposition}\label{easy but lengthy}
Let $I=B_1(u)$ be a squarefree principal Borel ideal. Then $e(S/\HS_{k}(I))$ is a unimodal function of $k$. 
\end{Proposition}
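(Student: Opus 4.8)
The plan is to compute $e(S/\HS_k(I))$ explicitly via Theorem~\ref{multi} and then verify unimodality of the resulting sequence of binomial coefficients by a direct comparison of consecutive ratios. Write $u = x_{i_1}\cdots x_{i_d}$ with $i_1 < \cdots < i_d$, let $B_1 = [h, h+\lambda-1]$ be its first block (so $\lambda = |B_1|$, $h = \min(u) = \height(I)$ by Proposition~\ref{height}), and let $g = i_d - d$ be the total number of gaps of $u$, i.e.\ $|\gap(u)| = g$. By Corollary~\ref{clever}(a), $\HS_k(I)$ is the squarefree principal Borel ideal $B_1(v_k)$ where $v_k = x_{p_1}\cdots x_{p_k} u$ and $p_1 > p_2 > \cdots > p_k$ are the $k$ largest gaps of $u$; this makes sense precisely for $0 \le k \le g$, and indeed $\projdim S/I = g$ for a squarefree principal Borel ideal (one can see this from Corollary~\ref{somayeh}, since $m(u) - L(u) - 1 = i_d - 1$ and the top Betti number index is $i_d - 1 = d + g - 1$, giving projective dimension $g$). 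So $k$ ranges over $0, 1, \dots, g$.

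\textbf{Computing the multiplicity.} The key step is to determine the first block $B_1^{(k)}$ of $v_k$ and its maximum, since Theorem~\ref{multi} then gives $e(S/\HS_k(I)) = \binom{\max(B_1^{(k)})}{|B_1^{(k)}| - 1}$. I would analyze this as follows. Filling in the maximal gaps from the top downward, after inserting $k$ gaps the support of $v_k$ is $\supp(u) \cup \{p_1, \dots, p_k\}$, which has $d + k$ elements lying in $[1, i_d]$, with exactly $g - k$ gaps remaining, all of them smaller than $h + \lambda - 1 = \max(B_1)$ once $k$ is large enough (because the gaps get filled from the right). More precisely: let $g_0 = $ the number of gaps of $u$ that are $\ge h$ (equivalently, gaps lying strictly between $h$ and $i_d$ — there are $g_0 = g$ of these unless... actually all gaps of $u$ are $< \max(u)$ and $\ge$... no, gaps can be $< h$ only if $h > 1$, but $\min(u) = h$ means $x_1, \dots, x_{h-1} \nmid u$, yet these are not gaps since a gap must be $< \max(u)$ — wait, they ARE gaps by the definition "$i < \max(u)$ and $x_i \nmid u$"). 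So I must be careful: $\gap(u) = [1, i_d] \setminus \supp(u)$ has $g = i_d - d$ elements, of which $h - 1$ lie below $h$ and $g - (h-1)$ lie in the interval $[h, i_d - 1]$. Let me set $g' = g - (h-1) = i_d - d - h + 1$ for the number of "internal" gaps at or above $h$. Filling the top $k$ gaps: as long as $k \le g'$, the filled positions $p_1 > \cdots > p_k$ all lie in $[h, i_d-1]$, so the first block of $v_k$ still starts at $h$ but now extends further; once $k > g'$ we start filling positions below $h$, which merges the block leftward past $h$. I expect the clean outcome to be that $\max(B_1^{(k)})$ and $|B_1^{(k)}|$ both increase with $k$ in a controlled way, with $e(S/\HS_k(I)) = \binom{a_k}{b_k}$ for explicit $a_k, b_k$, and the main content is showing the sequence $\binom{a_k}{b_k}$ is unimodal.

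\textbf{Unimodality.} Once the closed form $e(S/\HS_k(I)) = \binom{a_k}{b_k}$ is in hand, unimodality follows by showing the ratio $\binom{a_{k+1}}{b_{k+1}} \big/ \binom{a_k}{b_k}$ is first $\ge 1$ then $\le 1$ as $k$ increases. Since consecutive terms differ by adjusting the block by one element, the ratio should simplify to something of the form $\frac{a_k + c}{b_k + c'}$ or a small product thereof, and monotonicity of this ratio in $k$ — which is what gives unimodality rather than just "eventually decreasing" — should reduce to an elementary inequality between the parameters $h$, $\lambda$, $d$, and $i_d$. I would handle the two regimes $k \le g'$ and $k > g'$ separately and check the ratio is monotone (or at least changes sign at most once) across the junction $k = g'$.

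\textbf{Main obstacle.} The technical heart is the bookkeeping in the previous paragraph: correctly tracking how the first block of $v_k$ evolves as maximal gaps are filled in, distinguishing the phase where gaps above $h$ are consumed (block grows to the right) from the phase where gaps below $h$ get filled (block grows to the left), and getting the right closed form for $a_k = \max(B_1^{(k)})$ and $b_k = |B_1^{(k)}| - 1$. Once that combinatorial description is pinned down, the unimodality itself is a routine comparison of binomial coefficients; I do not anticipate difficulty there.
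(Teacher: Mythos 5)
Your strategy coincides with the paper's: identify $\HS_k(I)=B_1(u_k)$ via Corollary~\ref{clever}, where $u_k$ is obtained from $u$ by filling its $k$ largest gaps, and then read off the multiplicity from the first block of $u_k$ using Theorem~\ref{multi}. However, what you label the ``main obstacle'' --- tracking how the first block evolves --- is exactly the content of the proof, and you leave it as an expectation rather than carrying it out; moreover, the one concrete description you do give of the first regime is incorrect. Write $B_1=[a,b]$ for the first block of $u$ and $n=\max(u)$. Because the gaps are filled from the largest downward, the first block of $u_k$ does \emph{not} ``extend further'' while the above-block gaps are being consumed: the gap at $b+1$ (which exists since $B_1$ is a maximal block) is the smallest gap above $B_1$, hence the last of those to be filled. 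So the first block stays exactly $[a,b]$ for $k\le k_0$, where $k_0+1$ is the number of gaps of $u$ lying in $(b,n)$; at $k=k_0+1$ it jumps in one step to $[a,n]$; and only for $k>k_0$ does it grow leftward, being $[a-(k-k_0-1),\,n]$. Neither $\max$ nor cardinality of the first block ``increases in a controlled way'' throughout --- they are constant, then jump, then the block grows by one on the left per step.

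With this description, Theorem~\ref{multi} gives $e(S/\HS_k(I))=\binom{b}{\,b-a\,}$ for $1\le k\le k_0$ and $e(S/\HS_k(I))=\binom{n}{\,a-(k-k_0-1)\,}$ for $k>k_0$, and unimodality is then immediate: $\binom{n}{a}\ge\binom{b}{b-a}$ since $n\ge b$, and the tail $\binom{n}{a},\binom{n}{a-1},\ldots,\binom{n}{1}$ is unimodal because the binomial coefficients $\binom{n}{j}$ are unimodal in $j$; prepending a constant run bounded by the first term of the tail preserves unimodality. None of this closed form, nor the final comparison, is actually derived in your text (your ratio test is only announced), so as written the argument is a plan rather than a proof; also, as a minor point, $\projdim S/I=g+1$ rather than $g$ when $u$ has $g$ gaps (it is $\projdim I$ that equals $g$), although your stated range $0\le k\le g$ for the nonzero $\HS_k(I)$ is correct.
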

\begin{proof}
Let $\supp(u)=B_{1}\sqcup B_{2}\sqcup\cdots \sqcup B_{k}$ be the block decomposition of $u$ with $B_1=[a,b]$. Then by Corollary~\ref{clever}, $\HS_{k}(I)=B_1(u_k)$ for a monomial $u_k$ for all $k$. Let $[a_k,b_k]$ be the first block in the block decomposition of $u_k$. Then there exists $k_0$ such that $b_k=b$ and $a_k=a$ for $1\leq  k\leq k_0$ and for $k>k_0$ we have $b_k=n=\max(u)$ and $a_k=a-(k-k_0-1)$. Therefore, by Theorem \ref{multi}, one has  $e(S/\HS_{k}(I))=e(S/B_1(u_k))=\binom{b}{a}=e(S/I)$ for $1\leq  k\leq k_0$ 
and  $e(S/\HS_{k}(I))=\binom{n}{a-(k-k_0-1)}$ for $k>k_0$. This proves the assertion.
\end{proof}

\section{Homological shift ideals of $t$-spread Veronese  ideals}

In \cite{EHQ} the concept of a $t$-spread monomial was introduced. A monomial $x_{i_1}x_{i_2}\cdots x_{i_d}$ with $i_1\leq i_2\leq \dots \leq i_d$ is called {\it $t$-spread} if $i_j -i_{j-1}\geq t$ for $2\leq j\leq d$. We fix integers $d$ and $t$. The monomial ideal in $S=K[x_1,\ldots,x_n]$ generated by all $t$-spread monomials of degree $d$ is called the {\it t-spread Veronese ideal of degree $d$}. We denote this ideal by $I_{n,d,t}$. For $t=1$, one obtains the squarefree Veronese ideals, which may also be viewed as the edge ideals of  hypersimplexes. Properties of these ideals were first studied  in \cite{St}. The $K$-subalgebra of $S$ generated by the monomials $v\in G(I_{n,d,t})$ is called the {\it $t$-spread Veronese algebra}. In  \cite{Di} the Gorenstein property for the $t$-spread Veronese algebras was analyzed.

Let $t\geq 1$ be an integer, $u=x_{i_1}x_{i_2}\cdots x_{i_d}$ be a $t$-spreal monomial with $i_1< i_2< \cdots < i_d$. A {\em $t$-block} of $u$ of size $r$ is a subset $B\subseteq \supp(u)$ such that
$B=\{i_k,i_{k+1},\ldots,i_{k+r-1}\}$ with $i_{l+1}-i_l=t$ for all $k\leq l\leq k+r-2$. A $t$-block of $u$ is called {\em maximal} if it is not contained in any other $t$-block of $u$. The set $\supp(u)$ can be uniquely decomposed into the disjoint union of maximal $t$-blocks of  $u$, say $\supp(u)=B_{1}\sqcup B_{2}\sqcup\cdots \sqcup B_{k}$, where each $B_i$ is a maximal block and $\max\{j:\, j\in B_i\}<\min\{j:\, j\in B_{i+1}\}-t$ for all $i$. 
%
%
For $j=0,\ldots,r-1$,  the  {\em $j$th gap  interval} of $u$ is the set
$L_j=[ \max(B_j)+t,\min(B_{j+1})-1]$, where $B_0=\{-t+1\}$.
The union of all gap intervals of $u$ is denoted by $\gap(u)$ and any element of $\gap(u)$ is called a {\em gap} of $u$.

Let $u=x_{i_1}x_{i_2}\cdots x_{i_d}$ be a monomial with $i_1\leq i_2\leq \cdots\leq i_d$ and let $t$ be a positive integer. A pair $(i_k,i_{k+1})$ with $1\leq k\leq d-1$ is called a {\em $t$-irregular} pair of $u$ if $i_{k+1}-i_k<t$.  
\begin{Theorem}
\label{gent}
Let $n,d$ and $t$ be positive integers with $d,t\leq n$, and $t\geq 1$ and let $I=I_{n,d,t}$. Then 
\[
\HS_k(I)=(x_{i_1}x_{i_2}\cdots x_{i_{d+k}}:\ i_1<\cdots<i_{d+k},\ \text{and}\ \  i_{\ell+1}-i_{\ell}< t \ \text{for at most}\ k\  integers).
\]
\end{Theorem}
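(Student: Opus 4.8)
The plan is to reduce the statement to a Koszul-homology computation via Theorem~\ref{Koszulcycles}, exploiting the fact that $I_{n,d,t}$ is a $k$-Borel-type ideal in disguise. First I would recall that $I_{n,d,t}$ has linear quotients (cited in the introduction from \cite{EHQ}), so its minimal multigraded free resolution is given by the Eliahou--Kervaire-type / iterated mapping-cone construction, and the multigraded shifts in homological degree $k$ are exactly $\xb^{\ab}$ with $\ab = \Deg(\xb^F u)$ where $u\in G(I_{n,d,t})$, $F\subseteq\set(u)$, $|F|=k$. Thus the whole problem is to understand $\set(u)$ for the chosen order of linear quotients on the $t$-spread monomials of degree $d$, and to see which monomials $x_{i_1}\cdots x_{i_{d+k}}$ arise as $\xb^F u$. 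The key combinatorial observation I would isolate as a lemma: for a $t$-spread monomial $u$ of degree $d$, $\set(u)$ (with respect to the natural order coming from $\prec$, analogous to \Cref{anyorder}) equals $\gap(u)$ as defined just before the theorem, i.e.\ the union of the gap intervals $L_j=[\max(B_j)+t,\min(B_{j+1})-1]$. This is the $t$-spread analogue of the squarefree statement used in Proposition~\ref{gen1}.

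Next I would run the counting argument that matches both sides. On the ideal side: a monomial $w=x_{i_1}\cdots x_{i_{d+k}}$ with $i_1<\cdots<i_{d+k}$ is a generator of $\HS_k(I_{n,d,t})$ iff one can write $w=\xb^F u$ with $u$ a $t$-spread degree-$d$ monomial and $F\subseteq\gap(u)$, $|F|=k$. Given $w$, the natural candidate for $u$ is the "most spread-out" $t$-spread submonomial obtained by greedily selecting indices, and $F$ the complementary set; I would check that $F\subseteq\gap(u)$ precisely when the number of $t$-irregular pairs $(i_\ell,i_{\ell+1})$ of $w$ (pairs with $i_{\ell+1}-i_\ell<t$) is at most $k$. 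The intuition: each index we are forced to "throw into $F$" rather than keep in the $t$-spread part of $u$ corresponds to a place where consecutive indices of $w$ are too close together, and conversely. So the condition "$w=\xb^F u$ for some valid $(u,F)$" is equivalent to "at most $k$ of the consecutive gaps $i_{\ell+1}-i_\ell$ are $<t$", which is exactly the description in the statement. I would make this precise by induction on $k$, or alternatively by invoking Corollary~\ref{clever}-type recursion $\HS_1(\HS_k)=\HS_{k+1}$ if an analogous statement is available here (it should follow from the mapping-cone/regular decomposition function machinery, since $I_{n,d,t}$ has linear quotients and, plausibly, a regular decomposition function).

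Then I would verify the two inclusions carefully. For "$\subseteq$": any multigraded shift in homological degree $k$ is $\xb^F u$ with $u\in G(I_{n,d,t})$, $|F|=k$, $\max(F)<m(u)$, $F$ avoiding the forced positions — and I would show directly that such a monomial has at most $k$ irregular pairs, by tracking how inserting each element of $F$ into the index set of $u$ can create at most one new irregular pair while destroying none of the $t$-spread structure below. For "$\supseteq$": given $w$ with at most $k$ irregular pairs, I construct $u$ and $F$ explicitly (greedy left-to-right selection keeping the $t$-spread condition, $F$ = leftover indices), verify $u$ is $t$-spread of degree $d$, that $|F|\le k$ (pad $F$ with extra admissible gaps if $|F|<k$, using that gap intervals are nonempty enough — here I must be careful that there is room, which is where the hypotheses $d,t\le n$ enter), and that $F\subseteq\gap(u)$.

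**Main obstacle.** The delicate point is the bookkeeping in the $\supseteq$ direction: showing that every $w$ with exactly $j\le k$ irregular pairs can always be realized with $|F|=k$ (not just $|F|=j$), which requires producing $k-j$ extra gap positions of a suitable $t$-spread $u$ lying below $m(u)$ — i.e.\ confirming the gap intervals $L_0,\dots,L_{r-1}$ collectively have enough room, and that "spending" an irregular pair and "spending" a genuine gap are interchangeable. Equivalently, one must show the greedy decomposition is optimal and that the count is monotone. I expect this to be the part needing the most careful case analysis (mirroring the three cases $k=s$, $s<k$, $s>k$ in the proof of Proposition~\ref{gen1}), whereas the $\subseteq$ direction and the identification of $\set(u)$ with $\gap(u)$ should be comparatively routine.
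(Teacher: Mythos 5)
Your working route is, in substance, the paper's own proof: after the (misleading) first sentence you switch to the linear--quotients/mapping--cone description, namely that $\HS_k(I)$ is generated by the monomials $u\,\xb^F$ with $u\in G(I)$, $F\subseteq\set(u)$, $|F|=k$, then identify $\set(u)$ with $\gap(u)$, and finally translate membership into the count of $t$-irregular pairs. The paper does exactly this, quoting \cite[Lemma 1.5]{HT} for the description of the shifts, \cite[Theorem 2.1]{DHQ} for linear quotients (with respect to the lexicographic order -- this is the order for which $\set(u)$ must be computed), and \cite[Lemma 2.2]{DHQ} for $\set(u)=\gap(u)$, and then proves the two inclusions by the same bookkeeping you outline: inserting the elements of $F$, gap interval by gap interval, creates at most $|F|$ irregular pairs; conversely, from a monomial $w=x_{i_1}\cdots x_{i_{d+k}}$ with at most $k$ irregular pairs one deletes a $k$-subset of $\supp(w)$ containing the smallest member of each irregular pair and checks the remainder is a $t$-spread monomial of degree $d$.

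Two corrections. First, your opening premise is false: $I_{n,d,t}$ is not a $k$-Borel (nor squarefree Borel) ideal for $t\geq 2$ -- for instance $x_2x_4\in I_{4,2,2}$ while $x_2x_3\notin I_{4,2,2}$ -- so Theorem~\ref{Koszulcycles} and Corollary~\ref{j=1} are not available here; fortunately you never actually use them, since everything you need follows from linear quotients and \cite{HT}. Second, the ``main obstacle'' you isolate in the $\supseteq$ direction is largely a phantom: because $\deg w=d+k$ and $u$ must have degree $d$, the set $F$ of deleted indices has cardinality exactly $k$ automatically, so there is no padding of $F$ and the hypotheses $d,t\leq n$ play no role at that point. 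The genuine issue in that direction is only to choose \emph{which} $k$ indices to delete so that the remainder $u$ is $t$-spread and the deleted indices lie in $\gap(u)=\set(u)$ (recall every index below $\min(u)$ lies in the $0$th gap interval, which helps when $w$ has fewer than $k$ irregular pairs); this verification, which the paper itself passes over rather briskly, is where your careful case analysis should be spent, rather than on the interchangeability of ``gaps'' and ``irregular pairs.''
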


\begin{proof}
By \cite[Lemma 1.5]{HT}, one has
$$\HS_k(I)=(ux_{i_1}x_{i_2}\cdots x_{i_k}:\ u\in G(I),\  i_1<\cdots<i_k, \ \{i_1,\ldots,i_k\}\subseteq \set(u)).$$ 
Moreover, $I$ has linear quotients by \cite[Theorem 2.1]{DHQ}. For any $u\in G(I)$, we have $i\in \set(u)$ if and only if $i$ belongs to some gap interval of $u$ by \cite[Lemma 2.2]{DHQ}. Consider $u\in \G(I)$ and $\{i_1,\ldots,i_k\}\subseteq \set(u)$. Let $L_1,\ldots, L_m$ be the gap intervals of $u$, and let $k_j$ be the number of elements in $\{i_1,\ldots,i_k\}$ which belong to the gap interval $L_j$. Then $k=\sum_{j=1}^m k_j$.  Let $L_j=[\max(B_j)+t,\min(B_{j+1})-1]$ and $A_j=\{i_1,\ldots,i_k\}\cap L_j$. Then, for any $1\leq j\leq m$, the monomial $u\prod_{i\in A_j} x_{i}$ has at most $k_j$ $t$-irregular pairs  because of $\min(A_j)-\max(B_j)\geq t$. Moreover, for distinct integers $j$ and $s$ with $j<s$, we have $\min(A_s)-\max(A_j)\geq \min(L_s)-\max(L_j)\geq t$. This implies that $ux_{i_1}x_{i_2}\cdots x_{i_k}$ has at most $\sum_{j=1}^m k_j=k$ $t$-irregular pairs.

Conversely, assume that $v=x_{i_1}\cdots x_{i_{d+k}}$ is a monomial with at most $k$ $t$-irregular pairs and let $i_1<\cdots<i_{d+k}$. Let $\{j_1,\ldots,j_k\}\subseteq \supp(v)$ which contains the smallest element from each $t$-irregular pair of $v$. Then $u=v/\prod_{\ell=1}^k x_{j_\ell}$ is a minimal generator of $I$ and hence $v\in \HS_k(I)$.   
\end{proof}

It is conjectured that all the homological shift ideals of $t$-spread Veronese ideals have linear quotients. In the next result, we provide a proof only for the first shift ideal, which is still rather complicated. The expected order of linear quotients for higher shift ideals is similar to the one used in the following proof. 

\begin{Theorem}
\label{hs1}
Let $n,d$ and $t$ be positive integers with $d,t\leq n$ and $t\geq 1$ and let $I=I_{n,d,t}$. Then $\HS_1(I)$ has linear quotients. 
\end{Theorem}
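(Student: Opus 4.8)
The plan is to exhibit an explicit order of the minimal monomial generators of $\HS_1(I)$ and check directly that each colon ideal is generated by variables. By Theorem~\ref{gent}, $G(\HS_1(I))$ consists of all monomials $v = x_{i_1}\cdots x_{i_{d+1}}$ with $i_1 < \cdots < i_{d+1}$ having at most one $t$-irregular pair, i.e.\ at most one index $\ell$ with $i_{\ell+1}-i_\ell < t$. So $G(\HS_1(I))$ splits into two families: the $t$-spread monomials of degree $d+1$ (no irregular pair), which form $G(I_{n,d+1,t})$, and the monomials with exactly one irregular pair. The natural candidate order is to list the monomials of $I_{n,d+1,t}$ first, in the order that realizes linear quotients for the $t$-spread Veronese ideal (e.g.\ the reverse lexicographic or the order from \cite[Theorem 2.1]{DHQ}), and then append the monomials with exactly one irregular pair, ordered by some refinement — for instance first by the position $\ell$ of the irregular pair and then lexicographically, mimicking the anticipated higher-shift order mentioned before the statement.

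First I would record the combinatorial bookkeeping: for a monomial $v$ with exactly one irregular pair $(i_\ell, i_{\ell+1})$, removing $x_{i_\ell}$ yields a $t$-spread monomial $u \in G(I_{n,d,t})$, and $v = x_{i_\ell} u$ with $i_\ell$ lying in a gap interval of $u$; this is exactly the content of the proof of Theorem~\ref{gent}. I would use this to understand which variables enter $(v_1,\ldots,v_{s-1}) : v_s$ when $v_s$ has one irregular pair. The key point to verify is: if $w$ is a monomial divisible by $v_s$ modulo $v_r$ for some earlier $v_r$, then $w$ is divisible by some $v_{r'}$ with $v_{r'} : v_s$ equal to a single variable. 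The mechanism is the standard one for monomial ideals with linear quotients: given $v_r$ with $v_r / \gcd(v_r, v_s) = $ some variable set, pick the largest index $p$ with $x_p \mid v_r,\ x_p \nmid v_s$, and the smallest index $q$ with $x_q \mid v_s,\ x_q \nmid v_r$; then show $v' = x_q(v_r / x_p)$ is still a generator of $\HS_1(I)$ (i.e.\ still has at most one irregular pair), lies earlier in the order, and satisfies $v' : v_s = x_q$. This reduces the whole claim to checking that the "exchange" operation $v_r \mapsto x_q(v_r/x_p)$ with $p > q$ preserves the property of having at most one irregular pair and moves the monomial earlier in the chosen order.

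The main obstacle will be precisely that exchange-preservation step together with showing the new monomial comes earlier. Sliding a variable from position $p$ down to position $q$ can \emph{create} a new irregular pair at the insertion point $q$ (if $x_q$ lands within distance $< t$ of its new neighbor) while simultaneously it may \emph{destroy} the irregular pair that was at $p$ — so one has to track the net change carefully and, in the worst case, iterate the exchange (replace $q$ by a nearby admissible index, or move a different variable) until an honest single-variable colon relation to $v_s$ is produced. This is the "rather complicated" part alluded to in the text: it requires a careful case analysis according to whether the irregular pair of $v_r$ (if any) sits above or below $p$, whether $v_s$ has its irregular pair above or below $q$, and whether the variables being exchanged are adjacent in $v_r$. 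The order must be chosen so that each such exchange strictly decreases the monomial; I would make the order first compare the "$t$-spread part" $u = v/x_{i_\ell}$ in the linear-quotients order of $I_{n,d,t}$ (or $I_{n,d+1,t}$ for genuinely $t$-spread $v$), and break ties by the position and value of the inserted gap variable, so that lowering $p$ or lowering $q$ both move us downward. Once the order is fixed and the exchange lemma is established for all the cases, the linear-quotients property follows by the standard argument, and componentwise linearity / linear resolution of $\HS_1(I)$ is then immediate.
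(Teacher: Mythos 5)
There is a genuine gap: what you have written is a strategy, not a proof, and the two places where you defer the work are exactly where the content of the theorem lies. First, your order is never actually pinned down. You begin with one order (all of $G(I_{n,d+1,t})$ first, then the monomials with exactly one $t$-irregular pair, sorted by the position of that pair and then lexicographically) and later switch to a different one (compare the ``$t$-spread part'' $u=v/x_{i_\ell}$ in a linear-quotients order of $I_{n,d,t}$, ``or $I_{n,d+1,t}$ for genuinely $t$-spread $v$''). The second version is ambiguous: a generator with no irregular pair, and even one with an irregular pair, can be written as $x_iu$ with $u\in G(I_{n,d,t})$ and $i\in\gap(u)$ in several ways, and mixing comparisons inside $I_{n,d,t}$ with comparisons inside $I_{n,d+1,t}$ does not obviously yield a well-defined total order. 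The paper resolves precisely this point by fixing, for each $w\in G(\HS_1(I))$, the \emph{right presentation} $w=x_iu$ in which $u\in G(I_{n,d,t})$ is lexicographically largest, and then ordering by $u>_{\lex}v$, with ties broken by the inserted index; without such a normalization your claim ``linear quotients with respect to this order'' cannot even be tested.

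Second, the exchange lemma that carries the whole argument is not proved, and the specific one-step recipe you propose (take the largest $p$ with $x_p\mid v_r$, $x_p\nmid v_s$, the smallest $q$ with $x_q\mid v_s$, $x_q\nmid v_r$, and pass to $x_q(v_r/x_p)$) can fail on both counts you need: the new monomial may acquire a second irregular pair, and it need not precede $v_s$ in your order. You acknowledge this and propose to ``iterate the exchange \dots until an honest single-variable colon relation is produced,'' but no argument is given that this iteration terminates at a generator $v'$ earlier in the order with $v':v_s$ equal to a single variable dividing $v_r:v_s$ --- and that is the entire theorem. The paper's proof does not use a generic largest-$p$/smallest-$q$ exchange; it chooses the exchanged indices using the known linear-quotients order of $I_{n,d,t}$ itself (the first index where the two $t$-spread parts differ), the gap intervals of the resulting monomial $w$, and the maximality built into the right presentation, and then runs a two-case analysis (the inserted index equal or not equal to that first differing index) with several subcases in which the witnessing generator is constructed explicitly in each situation. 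Until you fix one order and carry out that case analysis (or a genuine substitute for it), the proposal does not establish the result.
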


\begin{proof}
We set $J=\HS_1(I)$. For any  monomial  $w\in G(J)$, we have $w=x_iu$ for a  monomial $u\in G(I)$ and some $i\in \gap(u)$. Such a presentation is called to be the right presentation of $w$ if $u$ is the largest monomial with respect to the lexicographic order (induced by $x_1>x_2>\cdots >x_n$) among all the possible monomials $v\in G(I)$ that appear in some presentation of $w$.  
We consider a total order on  $G(J)$  as follows:
Let $x_iu, x_jv\in G(J)$ be such that $x_iu\neq x_jv$, both $x_iu$ and $x_jv$ are the  right presentations. We set $x_iu>x_jv$ if $u>_{lex} v$ or $u=v$ and $i<j$.
We show that $J$ has linear quotients with respect to this order.

If $u=v$, then $x_iu:x_jv=x_iu:x_ju=x_i$ and there is nothing to prove. Let $u>_{\lex} v$. Let $u=\prod_{m=1}^d x_{r_m}$ with $r_1< r_2<\cdots< r_d$  and $v=\prod_{m=1}^d x_{s_m}$ with $s_1< s_2< \cdots < s_d$. Since $u>_{\lex} v$, there exists an integer $1\leq \ell \leq d$ such that 
$r_1=s_1$, $\dots$, $r_{\ell-1}=s_{\ell-1}$ and $r_{\ell}<s_{\ell}$.
By \cite[Theorem 2.1]{DHQ}, $I$ has linear quotients with respect to the lexicographic order. Moreover,  $w=(v/x_{s_{\ell}})x_{r_{\ell}}\in G(I)$ and $w>_{\lex} v$. Put $k=r_{\ell}$, one has 
$w:v=x_k$ and $x_k|(u:v)$. If $\ell=d$, then $w=u$. We will show that $x_ju\in J$. Indeed, if $j\in \gap(u)$, then the assertion is clear. If $j\notin \gap(u)$, then $j>k$. Moreover, $x_ju$ can be written as $x_ju=x_ku'$, where $u'=(x_ju/x_k)$ and $k\in \gap(u')$. Hence $x_ju=x_ku'\in J$ and $x_ju:x_jv=x_k$, as desired. So we may assume that $\ell<d$.  

\medskip
{\sc Case 1}. First assume that $k\neq j$. If $j\in \gap(w)$, then $x_jw\in J$. Moreover, $x_jw>x_jv$ since $w>_{\lex} v$. Since  $w=(v/x_{s_{\ell}})x_{k}$, we have 
$x_jw:x_jv=x_k$ and $x_k|(x_iu:x_jv)$. Hence we are done. So we may assume that $j\notin \gap(w)$. Since $x_j\in \gap(v)$, we have either $j<s_1$ or $s_{p-1}<j<s_p$ for some $p>1$ with $j\geq s_{p-1}+t$. 
If $j<s_1$,  then $\ell=1$ and $k<j$ because of  $j\notin \gap(w)$. Therefore $x_jw$ can be written as $x_jw=x_kw'$, where $w'=x_jw/x_k$. This means $w'\in G(I)$, because $w'=x_jx_{s_2}\cdots x_{s_d}$ and $s_2-j\geq s_2-s_1\geq t$. Moreover, $k\in \gap(w')$ since $k<j$. So $x_kw'\in J$ and since $w>_{\lex} v$, we have $x_kw'>x_jv$. Also $x_kw':x_jv=x_jw:x_jv=x_k$. 
Therefore we are done in the case that $j<s_1$. Now assume that  $s_{p-1}<j<s_p$ for some $p>1$, with $j\geq s_{p-1}+t$. 
If $p\neq \ell$, then $j\in \gap(w)$ which contradicts to  $j\notin \gap(w)$. Hence
 $p=\ell$, which implies $s_{\ell-1}<j<s_{\ell}$ and $j\geq s_{\ell-1}+t$. Note that $s_{\ell-1}<k<s_{\ell}$. If $j<k$, then $j$ belongs to the gap interval $[s_{\ell-1}+t,k]$ of $w$, which contradicts to $j\notin \gap(w)$. So we have $j>k$. Also since $[k+t,s_{\ell+1}]$ is a gap interval of $w$, $j$ does not belong to this interval. So $j<k+t$. 
Then for $w'=(x_jw)/x_k$, we have $w'\in G(I)$ and $k\in \gap(w')$. So $x_jw=x_kw'\in J$ and $x_kw'>x_jv$, since $w>_{\lex} v$. Moreover, $x_jw:x_jv=x_k$, as desired.
\medskip

{\sc Case 2}. Let $k=j$. Then $s_{\ell-1}<j<s_{\ell}$. First we show that  $s_{\ell}< j+t$. Suppose in contrary that $s_{\ell}\geq j+t$. Since $\ell<d$, we have $s_{\ell}\in \gap(w)$. Since  $x_jv=x_{s_{\ell}}w$ and $w>_{\lex} v$, $x_jv$ is not a right presentation, a contradiction.  Hence  $s_{\ell}< j+t$ and $s_{\ell}\notin \gap(w)$. 
If there exists $q\in \supp(x_iu)\cap \gap(w)$, then $x_qw\in J$, $x_qw:x_jv=x_q$ and $x_q|(x_iu:x_jv)$ because $s_{\ell}\notin \gap(w)$ and $q\in \gap(w)$. So in this situation we are done. Now, assume that $\supp(x_iu)\cap \gap(w)=\emptyset$ which in particular implies that $i\notin \gap(w)$. Hence for the gap interval $L=[j+t,s_{\ell+1}-1]$ of $w$, we have $L\cap \supp(u)=\emptyset$. This implies that $r_{\ell+1}\geq s_{\ell+1}$. So $r_{\ell+2}\geq r_{\ell+1}+t\geq s_{\ell+1}+t$. Now, consider the gap interval $L'=[s_{\ell+1}+t,s_{\ell+2}-1]$ of $w$. Since $L'\cap\supp(u)=\emptyset$, by the inequality $r_{\ell+2}\geq s_{\ell+1}+t$, we have $r_{\ell+2}\geq s_{\ell+2}$. By the similar arguments, we have $r_p\geq s_p$ for any $\ell+1\leq p\leq d$. Since $i\in\gap(u)\setminus \gap(w)$, we have $u\neq w$.  So there exists $p$ such that $r_p>s_p$. Let $b=\max\{p:\ r_p>s_p\}$. Then $r_{p+1}=s_{p+1}$ and $r_p>s_p$. Then $x_{r_p}w$  can be written as $x_{r_p}w=x_{s_p}w'$, where $w'=(x_{r_p}w/x_{s_p})$. Since $r_p-s_{p-1}>s_p-s_{p-1}\geq t$, and $s_{p+1}-r_p=r_{p+1}-r_p\geq t$, we conclude that $w'$ is a $t$-spread monomial. It forces that $w'\in G(I)$. Also clearly $s_p\in \gap(w')$. Hence $x_{r_p}w=x_{s_p}w'\in J$ and since $w'>_{\lex} v$, we have $x_{r_p}w>x_jv$. Note that $x_{r_p}w:x_jv=x_{r_p}$, as desired.
\end{proof}

\begin{center}
{\bf Acknowledgement}
\end{center}
The present paper was in large parts completed while the authors stayed at Mathematisches
Forschungsinstitut in Oberwolfach, September 5 to September 25, 2021, in the frame of
the Research in Pairs Program.  The second author was supported by a 
grant awarded by CIMPA.
The fourth author is supported by the National Natural Science Foundation of China (No.~11271275) and by foundation of the Priority Academic Program Development of Jiangsu Higher Education Institutions.

\medskip

\end{document}